\theoremstyle{plain}
\newcommand{\refnewtheoremn}[4]{
\newaliascnt{#1}{#2}
\newtheorem{#1}[#1]{#3}
\aliascntresetthe{#1}
\expandafter\providecommand\csname #1autorefname\endcsname{#4}}
\newcommand{\refnewtheorem}[3]{\refnewtheoremn{#1}{#2}{#3}{#3}}
\def\makeCal#1{
\expandafter\newcommand\csname c#1\endcsname{\mathcal{#1}}}
\def\makeBB#1{
\expandafter\newcommand\csname b#1\endcsname{\mathbb{#1}}}
\def\makeFrak#1{
\expandafter\newcommand\csname f#1\endcsname{\mathfrak{#1}}}
\edef\y{\@Alph\count@}
\newtheorem{thm}{Theorem}[section]
\theoremstyle{definition}
\newcommand{\fh}{\mathfrak{h}}
\newcommand{\Stab}{\operatorname{Stab}}
\renewcommand{\Im}{\operatorname{Im}}
\renewcommand{\Re}{\operatorname{Re}}
\newcommand {\Hom}{\operatorname{Hom}}
\newcommand {\Aut}{\operatorname{Aut}}
\newcommand{\DT}{\operatorname{DT}}
\newcommand{\ch}{\operatorname{ch}}
\newcommand{\Coh}{\operatorname{Coh}}
\newcommand{\height}{{H}}
\newcommand{\Li}{\operatorname{Li}}
\newcommand{\GV}{\operatorname{GV}}
\newcommand {\<}{\langle}
\renewcommand {\>}{\rangle}
\newcommand{\isom}{\cong}
\newcommand{\tensor}{\otimes}
\renewcommand{\O}{\mathscr{O}}
\newcommand{\D}{D}
\newcommand{\dual}{\vee}
\renewcommand{\b}{\,|\,}
\newcommand{\oomit}[1]{}
\DeclareMathAlphabet{\mathcalligra}{T1}{calligra}{m}{n}
\DeclareFontShape{T1}{calligra}{m}{n}{<->s*[2.2]callig15}{}
\begin{document}

\title{ Riemann-Hilbert problems for the resolved conifold}
\author{Tom Bridgeland}
\date{}

\begin{abstract}{We study the Riemann-Hilbert problems of  \cite{RHDT} in the case of  the Donaldson-Thomas theory of the resolved conifold.  We give  explicit solutions in terms of the Barnes double and triple sine functions. We show that the $\tau$-function of  \cite{RHDT} is a non-perturbative partition function, in the sense  that its asymptotic expansion   coincides with  the   topological  closed string  partition function. } \end{abstract}

\maketitle



\section{Introduction}

In \cite{RHDT} we studied a class of Riemann-Hilbert problems arising naturally in Donaldson-Thomas theory. They involve piecewise holomorphic maps from the complex plane into an algebraic torus $(\bC^*)^n$ with prescribed discontinuities along a given collection of rays. These problems represent the conformal limit of the Riemann-Hilbert problems appearing in the work of  Gaiotto, Moore and  Neitzke \cite{GMN1}. The purpose of this paper is to give a detailed solution to the Riemann-Hilbert problems associated to the resolved conifold using a class of special functions related to Barnes' multiple gamma functions \cite{Barnes2,Barnes3,Barnes4}.  
We also compute the  $\tau$-function in the sense of \cite{RHDT}, and show that the  asymptotic expansion of $\log(\tau)$  reproduces the positive degree terms in  the genus expansion of the topological  string free energy. Our calculations thus suggest a new approach to defining non-perturbative partition functions in topological string theory. \smallskip

\subsection{BPS structures for the resolved conifold}

Let $X$ denote the resolved conifold: this is the  non-compact Calabi-Yau threefold which is the total space of the vector bundle $\O_{\bP^1}(-1)^{\oplus 2}$. Contracting the zero-section $C\subset X$ gives the threefold ordinary double point \[(x_1 x_2-x_3x_4=0)\subset \bC^4.\] The Riemann-Hilbert problems we shall consider arise from the Donaldson-Thomas  (DT) theory of the category of compactly-supported coherent sheaves on $X$.
They depend on a point  in the space
\[M=\big\{(v,w)\in \bC^2: w\neq 0 \text{ and }v+nw\neq 0\text { for all }n\in \bZ\big\}\subset \bC^2.\]
Mathematically speaking, as we recall in Appendix A, this is the space
 of stability conditions on the derived category $\operatorname{D^bCoh}(X)$, quotiented by the subgroup of the group of auto-equivalences  generated by spherical twists. From the physical standpoint, it can be thought of as the smallest unramified cover of the natural  $\bC^*$-bundle over the stringy K{\"a}hler moduli space on which central charges of branes are single-valued.
 
Associated to a point of $M$ is a collection of data which we referred to in \cite{RHDT} as a BPS structure. Mathematically it represents the output of unrefined DT theory applied to the  given stability condition. In physical terms it encodes the BPS invariants of the non-linear supersymmetric sigma model associated to the space $X$. It consists of

\begin{itemize}
\item[(i)]
The charge lattice $\Gamma_{\leq 1}=\bZ\beta\oplus \bZ\delta$  equipped with the zero skew-symmetric form $\<-,-\>=0$.\smallskip

\item[(ii)]The central charge: this is the group homomorphism
\[Z_{\leq 1}\colon \Gamma_{\leq 1}\to \bC, \qquad Z_{\leq 1}(a\beta+b\delta)=2\pi i(av+bw).\]

\item[(iii)]
The   nonzero BPS invariants 
\begin{equation}
\label{partiii2}\Omega(\gamma)=\begin{cases} 1 &\text{if }\gamma=\pm \beta+n\delta \text{ for some } n\in \bZ,\\
-2 &\text{if }\gamma=k\delta\text{ for some }k\in \bZ\setminus\{0\}.\end{cases}\end{equation}
\end{itemize}

The lattice $\Gamma_{\leq 1}$ is the natural receptacle for the Chern characters of compactly-supported sheaves on $X$. All such sheaves are supported in dimension $\leq 1$: the most important examples  are the line bundles $\O_C(n)$ supported on the zero-section $C\subset X$, and the skyscraper sheaves $\O_x$ supported at points $x\in X$. We choose the sign of the generator $\delta$ so that  \[\ch(\O_C(n))=\beta-n\delta, \qquad \ch(\O_x)=-\delta.\] The two cases in \eqref{partiii2} give the contribution to DT theory  from sheaves supported in dimension one and  zero respectively, and arise from extensions of the above-mentioned sheaves and their shifts. For the mathematical derivation of \eqref{partiii2}   we refer to \cite[Example 6.30]{JS}.

The form $\<-,-\>$, which in the general context of \cite{RHDT} is the Euler form of the relevant category, vanishes in this case because curves on a threefold have zero intersection number. This has the consequence that the BPS invariants $\Omega(\gamma)$ do not depend on the choice of point $(z,w)\in M$. 
It also implies that the Riemann-Hilbert problem associated to our BPS structure is trivial.
 To remedy this  we consider  the double of the above BPS structure. This involves replacing the lattice $\Gamma_{\leq 1}$ with the lattice
 \[\Gamma=\Gamma_{\leq 1} \oplus \Gamma_{\geq 2}, \qquad \Gamma_{\geq 2}:=\Gamma_{\leq 1}^\vee=\Hom_\bZ(\Gamma_{\leq 1},\bZ),\]
 equipped with the canonical non-degenerate integral skew-symmetric form, and extending the map $\Omega$ by zero. We can extend the map $Z_{\leq 1}$ via an arbitrary group homomorphism
\[Z_{\geq 2}\colon \Gamma_{\geq 2}\to \bC,\]
so that the space of possible doubled BPS structures becomes the cotangent bundle $T^*M$.

\subsection{The Riemann-Hilbert problem}

Introduce the twisted torus
\[\bT=\big\{g\colon \Gamma\to \bC^*: g(\gamma_1+\gamma_2)=(-1)^{\<\gamma_1,\gamma_2\>} g(\gamma_1)\cdot g(\gamma_2)\big\}.\]
It is a torsor for the algebraic torus $\Hom_\bZ(\Gamma,\bC^*)$, and hence non-canonically isomorphic to $(\bC^*)^4$. For each class $\gamma\in \Gamma$ there is a twisted character \[x_\gamma\colon \bT\to \bC^*, \qquad x_\gamma(g)=g(\gamma).\] 
The ray diagram associated to a BPS structure  consists of the rays $\bR_{>0}\cdot Z(\gamma)$ determined by those classes $\gamma\in \Gamma$ for which $\Omega(\gamma)\neq 0$. These rays are said to be active. The ray diagram for the BPS structure corresponding to a point $(v,w)\in M$ is illustrated in Figure \ref{fig} below. Note that the doubling procedure does not affect this.

The Riemann-Hilbert problem associated to the doubled BPS structure defined by a point of $T^*M$ depends also on the choice of an element $\xi\in \bT$ called the constant term. The problem then asks for a piecewise holomorphic map $\Phi\colon \bC^*\to \bT$
 which is holomorphic in the complement of the active rays, has a prescribed discontinuity as $t\in \bC^*$ crosses an active ray $\ell\subset \bC^*$, and has certain given limiting behaviour as $t\to 0$ or $t\to \infty$.  Composing with the twisted characters of $\bT$ we can equivalently encode the solution  in the system of maps \[\Phi_\gamma\colon \bC^*\to \bC^*, \qquad  \Phi_\gamma(t)=x_\gamma(\Phi(t))\]
 indexed by $\gamma\in \Gamma$.
 
 In a bit more detail, the required discontinuity as $t\in \bC^*$ crosses an active ray $\ell\subset \bC^*$ is 
\[\Phi_\beta(t)\mapsto \Phi_\beta(t)\cdot \prod_{Z(\gamma)\in \ell}(1-\Phi_\gamma(t))^{\,\Omega(\gamma)\<\gamma,\beta\>},\]
and we ask that 
\[\exp(Z(\gamma)/t)\cdot \Phi_\gamma(t)\to  x_\gamma(\xi),\]
as $t\to 0$, and that each $\Phi_\gamma(t)$ should have 
moderate growth as $t\to \infty$, in the sense that  there exists $k>0$ such that for all $|t|\gg 0$
\[|t|^{-k}<|\Phi_\gamma(t)|<|t|^k.\]
We review the precise details of the Riemann-Hilbert  problem in Sections \ref{two} and \ref{bps}.
Our first main result  can be summarised as follows.

 \begin{thm}
 \label{mememe}
 Consider the doubled BPS structure corresponding to a point of $T^*M$, and choose a constant term $\xi\in \bT$ which satisfies $x_\gamma(\xi)=1$ for all classes $\gamma\in \Gamma_{\leq 1}$. Then the corresponding Riemann-Hilbert problem  has a unique solution, which can be written explicitly in terms of Barnes double and triple sine functions.
 \end{thm}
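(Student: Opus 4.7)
The plan is to split the problem along the decomposition $\Gamma = \Gamma_{\leq 1} \oplus \Gamma_{\geq 2}$, solve the lower part essentially by inspection, and then match the upper part to a suitable product of Barnes multiple sine functions. For $\gamma \in \Gamma_{\leq 1}$, the discontinuity of $\Phi_\gamma$ across any active ray involves the pairing $\<\gamma, \alpha\>$ with an active class $\alpha \in \Gamma_{\leq 1}$, and this pairing vanishes identically because the form on $\Gamma_{\leq 1}$ is zero. Hence each such $\Phi_\gamma$ is globally holomorphic on $\bC^*$; combined with the normalisation $x_\gamma(\xi) = 1$, the prescribed asymptotics $\exp(Z(\gamma)/t)\cdot \Phi_\gamma(t) \to 1$ as $t \to 0$, and moderate growth as $t \to \infty$, the unique solution must be $\Phi_\gamma(t) = \exp(-Z(\gamma)/t)$.

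Next, for $\gamma \in \Gamma_{\geq 2}$, substituting this expression into the jump relation yields
\[\frac{\Phi_\gamma^+(t)}{\Phi_\gamma^-(t)} = \prod_{Z(\alpha) \in \ell}\bigl(1 - e^{-Z(\alpha)/t}\bigr)^{\Omega(\alpha)\<\gamma,\alpha\>},\]
and taking logarithms converts the problem into a scalar additive Riemann-Hilbert problem whose jumps are explicit $\bZ$-linear combinations of $\log(1 - e^{-Z(\alpha)/t})$. The active classes split into two families: the curve classes $\pm\beta + n\delta$ (contributing $\Omega = 1$ on the rays $\pm 2\pi i(v + nw)\cdot \bR_{>0}$) and the point classes $k\delta$ (contributing $\Omega = -2$ on the rays $2\pi i kw\cdot \bR_{>0}$). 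I would then write down an ansatz as an explicit product of Barnes double sines (one factor to absorb each curve-class family) and a Barnes triple sine (handling the point-class family, whose natural multiplicity in $S_3$ matches the $-2$ in the BPS invariants), and verify (i) that its jump across each active ray reproduces the prescribed exponent via the classical discontinuity and quasi-periodicity properties of $S_2$ and $S_3$, and (ii) that its asymptotics at $t\to 0$ and $t\to \infty$ match the normalisation conditions. Uniqueness is then immediate: the ratio of any two solutions is holomorphic on $\bC^*$ with trivial constant term and polynomial bounds at both ends, so by a Liouville argument it must be the identity element of $\bT$.

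The main obstacle is this matching step. Because the curve-class rays $\pm 2\pi i(v + nw)$ accumulate towards the directions $\pm 2\pi i w$, no naive term-by-term sum of single-ray contributions can define a genuine piecewise holomorphic function on $\bC^*$; the regularised infinite products hidden inside the Barnes multiple sine functions are precisely the analytic objects which encode the whole accumulating family of jumps in one stroke. The technical heart of the argument is therefore the choice of periods $\omega_i$ and arguments inside the Barnes-sine ansatz so that, ray by ray and family by family, the discontinuities align with $\Omega(\alpha)\<\gamma,\alpha\>$, together with an invocation of Stirling-type asymptotic expansions of $\log S_2$ and $\log S_3$ to confirm the required growth bounds at the origin and at infinity.
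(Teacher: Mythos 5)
Your strategy is the one the paper follows: the classes in $\Gamma_{\leq 1}$ are null, so $\Phi_\gamma(t)=\exp(-Z(\gamma)/t)$ is forced there; the problem then reduces to explicit scalar Riemann--Hilbert problems for the dual generators $\beta^\vee,\delta^\vee$, solved by a double sine function and a triple sine function with a repeated parameter respectively; and uniqueness is the Liouville-type argument for uncoupled convergent structures. However, two steps in your plan are not mere bookkeeping and, as stated, would fail.

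First, a bare product of Barnes sine functions does not satisfy the normalisation at $t\to 0$ or the polynomial bound at $t\to\infty$. The expansion of $\log \sin_2(z\b\omega_1,\omega_2)$ as $\omega_2\to 0$ begins with a term proportional to $\omega_2^{-1}\Li_2(e^{2\pi i z/\omega_1})$, and as $\omega_2\to\infty$ it grows linearly in $\omega_2$; so the function neither tends to $1$ at the origin nor stays within polynomial bounds at infinity. One must multiply by $e^{Q}$ for an explicit Laurent polynomial $Q$ in $t$ (with coefficients involving $\Li_2$, $\Li_3$ and $\zeta(3)$) that simultaneously kills the divergent and constant terms at $0$ and the super-polynomial growth at $\infty$, while being periodic under the shifts so as not to disturb the jumps. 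Producing these prefactors, and checking they work at both ends at once, is part of the content of the theorem; "invoking Stirling-type asymptotics to confirm the growth bounds" will instead reveal that the uncorrected ansatz violates them.

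Second, "verify the jump across each active ray" is not the right thing to check near the accumulation directions $\pm\bR_{>0}\cdot 2\pi i w$. The binding condition there is a single relation comparing the solution on a half-plane with its value at $-t$, across a sector that is almost a half-plane and contains infinitely many curve-class rays together with the ray through $2\pi i w$ carrying all the classes $k\delta$ with $\Omega=-2$. Concretely this becomes a reflection identity of the shape
\[F(z+\omega_2\b\omega_1,\omega_2)\cdot F(z\b\omega_1,-\omega_2)=\prod_{k\geq 0}\big(1-x_2q_2^{k}\big)\cdot\prod_{k\geq 1}\big(1-x_2^{-1}q_2^{k}\big)^{-1},\]
together with its triple-sine analogue (whose right-hand side, combined with a normalising denominator $G(0\b\omega_1,\omega_2)$, is what produces the factor $\prod_{k\geq 1}(1-q^k)^{-2k}$ attached to the $\Omega=-2$ classes). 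These identities are proved by rotating the contour in the Barnes integral representation past the poles on the ray $\bR_{>0}\cdot 2\pi i/\omega_2$ and summing residues; without them, the claim that the ansatz reproduces the prescribed discontinuity on the accumulating family has no proof. You correctly flag the accumulation as the obstacle, but the proposal stops exactly where the argument has to begin.
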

 
We will give a  more precise statement of this result  in Section \ref{solution}, after the relevant special functions have been introduced in Section \ref{special}.

  
 \subsection{The $\tau$-function}
It turns out that the unique solutions of Theorem \ref{mememe} can be encoded in a single piecewise-holomorphic function $\tau=\tau(v,w,t)$. To do this we first re-express the unique solutions of Theorem \ref{mememe} in terms of  maps $\Psi_\gamma\colon \bC^*\to \bC^*$ by writing
 \[\exp(Z(\gamma)/t)\cdot \Phi_\gamma(t)=\Psi_\gamma(t)\cdot x_\gamma(\xi).\]
  It is easy to see that the maps $\Psi_\gamma$ are independent of the extended part of the central charge $Z_{\geq 2}$, and  therefore only depend on $v,w$ and $t$. We then look for a piecewise-holomorphic function $\tau=\tau(v,w,t)$ which is  invariant under simultaneous rescaling of all variables, and which satisfies
 \[\frac{\partial}{\partial t} \log  \Psi_{\beta^\vee} (t) =\frac{\partial}{\partial v} \log \tau(v,w,t), \qquad \frac{\partial}{\partial t} \log  \Psi_{\delta^\vee} (t)=  \frac{\partial}{\partial w} \log \tau(v,w,t)=0.\]
When it exists, such a function $\tau$ is easily seen to be unique up to  multiplication by a nonzero constant. We review the details of this definition in Section \ref{two}.

 In the case of the Riemann-Hilbert problems associated to the resolved conifold we  show that a $\tau$-function in the above sense does indeed exist, and we compute it explicitly.
 Let us introduce a function $K(v,w,t)$ via the integral representation 
 \begin{equation}\label{alba}K(v, w,t)=\exp\Bigg(-\int_C \frac{e^{vs}-1}{e^{ws}-1}\cdot \frac{e^{ts}}{(e^{ts}-1)^2}\cdot \frac{ds}{s}\Bigg),\end{equation}
 where the contour $C$ is the real axis with a small detour above the origin. This representation is
 valid for $0\leq \Re(v)\leq \Re(w)$. Let us also introduce
 \[R(v,w, t)= \Big(\frac{w}{2\pi it}\Big)^2 \big(\Li_3(e^{2\pi i v/w})-\zeta(3)\big)+\frac{i\pi}{12}\cdot\frac{ v}{w}.\]
 In Section \ref{solution} we shall prove 
 
 \begin{thm} \label{corr}The expression
 \[\tau(v,w,t)= K(v,w,t)\cdot \exp({R(v,w,t)})\]
 defines a $\tau$-function for the  variation of BPS structures defined by the resolved conifold.
As $t\to 0$ there is an asymptotic expansion
 \[ \log \tau(v,w,t)\,\sim\,  -\frac{1}{12}\log \Big(\frac{-w}{t}\Big) +\frac{i\pi}{12}\cdot \frac{v}{w} \]\[+
\sum_{g\geq 1}   \frac{B_{2g} \cdot \Li_{3-2g}(e^{2\pi i v/w})}{2g\cdot (2g-2)!  }\, \Bigg(\frac{2\pi i t}{w}\bigg)^{2g-2}+ \sum_{g\geq 2}\frac{ B_{2g} \cdot B_{2g-2}  }{2g\cdot (2g-2)\cdot  (2g-2)! }\, \bigg(\frac{2\pi i t}{w}\bigg)^{2g-2}
.\]
\end{thm}

The  positive degree part of the above series  reproduces the  free energy of the resolved conifold, with $2\pi t/w$ playing the role of the string coupling. In mathematical terms it is the generating function for the Gromov-Witten invariants of $X$.  Since the function $\tau$ has pleasant analytic properties it can be considered as a good candidate for a non-perturbative partition function of the conifold. There is quite a large theoretical physics literature on such non-perturbative partition functions, which the author is unfortunately not  competent to summarise. We merely note here that the expression \eqref{alba}   does indeed appear in the string theory literature: see for example equation (3.9) in \cite{KM} (with $\beta=1$). For more on non-perturbative partition functions in this context the reader    could start by consulting \cite{HO,Ko}.

\begin{remark}In \cite[Section 5]{RHDT} it is explained that for families of finite, integral,  uncoupled BPS structures (see \cite[Section 1]{RHDT} for precise definitions) the $\tau$-function is a finite product of Barnes $G$-functions, one for each nonzero BPS invariant. The BPS structures arising from curve-counting on Calabi-Yau threefolds are uncoupled and conjecturally integral, but they are certainly not finite.  Nonetheless, in \cite[Section 6]{RHDT}, the asymptotic expansion of the corresponding infinite formal product of Barnes $G$-functions is shown to reproduce that part of the topological string partition function arising from degenerate contributions of genus 0 curves, at least in postive degrees in the string coupling.  To produce a genuine analytic solution of the Riemann-Hilbert problem  in this way however, one has to make sense of a (presumably divergent)  infinite product of $G$-functions. The  function $K(v,w,t)$ appearing above can be thought of as regularisation of this infinite product. \end{remark}

\subsection{Some motivation from mirror symmetry}
\label{mot}

Classical mirror symmetry \cite{COGP,Giv} relates the periods of a Calabi-Yau threefold $Y$ to generating functions for enumerative invariants of a mirror Calabi-Yau threefold $X$. This can be viewed as an identification between two variations of Hodge structures (VHS). On one side is the classical VHS on the moduli space of complex structures on $Y$, considered in a neighbourhood of a  maximally unipotent degeneration. On the other is a VHS over the complexified K{\"a}her cone of $X$, constructed from the genus 0 Gromov-Witten (GW) invariants of $X$.

There is an obvious asymmetry here, in  that the moduli space of complex structures on $Y$ is a global space with interesting topology, which we are choosing to view near a given boundary point, whereas on the 
other side, the complexified K{\"a}her cone of $X$ has no interesting topology. To remove this asymmetry, we would like to see  the complexified K{\"a}hler cone as  (the universal cover of) a punctured neighbourhood of a boundary point in a larger space with non-trivial topology. Moreover one would like to be able to extend the VHS on the K{\"a}hler cone to a global VHS on this  `stringy' K{\"a}hler moduli space. Mirror symmetry should then give rise to a map between the complex moduli space of $X$ and the stringy K{\"a}hler moduli space of $Y$, identifying the two VHS.

Our best hope for a mathematical definition of such a stringy K{\"a}hler moduli space is via the space of stability conditions \cite{Br1,Do} on the derived category of coherent sheaves $\D(X)=\D^b\Coh(X)$. In fact this space $\Stab(X)$ is better thought of as being mirror to the space of deformations of the category $\D(Y)$, which contains the classical moduli space of complex structures on $Y$ within it (see \cite[Section 7]{Br2}). On this larger space one should expect a generalization of the notion of a VHS, which is referred to in \cite{KKP} as a non-commutative VHS. Nonetheless, the general conclusion remains the same: we should seek a geometric structure on the space of stability conditions which reproduces the A-model VHS in the large volume limit.

The definition of the GW invariants of $X$ is too geometric in nature to generalise to the derived category $\D(X)$. Instead, the natural enumerative invariants associated to points of the space $\Stab(X)$ are (generalized) DT invariants \cite{JS,KS1}, which encode the virtual Euler characteristics of moduli spaces of stable objects of each given Chern character. Although rank one DT invariants  are known to encode equivalent data to the GW invariants \cite{MNOP,PP}, the two systems of invariants have very different formal properties. In particular, it is not at all clear how to extract a VHS from DT theory; in the case of GW theory this arises from the geometric properties  of the compactification  of the moduli  of stable maps. 

The most  fundamental property of DT invariants is the Kontsevich-Soibelman wall-crossing formula \cite{KS1}. The strong analogy with the iso-Stokes condition for families of irregular connections  \cite{BTL} then suggests that the Riemann-Hilbert problem considered in \cite{RHDT}  might be the key to defining the required geometric structures on $\Stab(X)$. A closely-related version of this Riemann-Hilbert problem also plays a fundamental role in the work of Gaiotto, Moore and Neitzke \cite{GMN1}.
We view the calculations of this paper as an indication that this approach is on the right lines. We are using DT invariants as the basis for a non-perturbative construction  on the space of stability  conditions, which is appropriately invariant under the group of autoequivalences, and which  reproduces Gromov-Witten theory near the large volume limit.

\subsection*{Acknowledgements} I thank   Alba Grassi,  Kohei Iwaki and Bal{\' a}zs Szendr{\H o}i for useful remarks. I am particularly grateful to Simon Ruijsenaars for his expert help with  multiple sine functions. I would also like to thank the anonymous referee for his or her careful reading of the manuscript.

\section{BPS structures and Riemann-Hilbert problems}
\label{two}

In this section we recall some definitions and results from \cite{RHDT}.

\subsection{BPS structures and their doubles}

We start with the following definition, which  abstracts the output of unrefined DT theory.

\begin{defn}
A BPS structure $(\Gamma,Z,\Omega)$  consists of \begin{itemize}
\item[(a)] A finite-rank free abelian group $\Gamma\isom \bZ^{\oplus n}$, equipped with a skew-symmetric form \[\<-,-\>\colon \Gamma \times \Gamma \to \bZ,\]

\item[(b)] A homomorphism of abelian groups
$Z\colon \Gamma\to \bC$,\smallskip

\item[(c)] A map of sets
$\Omega\colon \Gamma\to \bQ,$
\end{itemize}

satisfying the following properties:

\begin{itemize}
\item[(i)] Symmetry: $\Omega(-\gamma)=\Omega(\gamma)$ for all $\gamma\in \Gamma$,\smallskip
\item[(ii)] Support property: fixing a norm  $\|\cdot\|$ on the finite-dimensional vector space $\Gamma\tensor_\bZ \bR$, there is a constant $C>0$ such that 
\begin{equation}
\label{support}\Omega(\gamma)\neq 0 \implies |Z(\gamma)|> C\cdot \|\gamma\|.\end{equation}
\end{itemize}
\end{defn}
The group homomorphism $Z$ is  called the central charge. The rational numbers $\Omega(\gamma)$ are called BPS invariants.
The Donaldson-Thomas (DT) invariants  of a BPS structure $(\Gamma,Z,\Omega)$ are defined by the expression
\begin{equation}
\label{bps1}\DT(\gamma)=\sum_{ \gamma=m\alpha}   \frac{1}{m^2} \, \Omega(\alpha)\in \bQ,\end{equation}
where the sum is over integers $m>0$ such that $\gamma$ is divisible by $m$ in the lattice $\Gamma$. A class $\gamma\in \Gamma$ is called active if $\Omega(\gamma)\neq 0$. 

A BPS structure $(Z,\Gamma,\Omega)$ will be called
\begin{itemize}
\item[(i)] 
 convergent, if for some $R>0$
\begin{equation}\label{concon}\big.\sum_{\gamma\in \Gamma} |\Omega(\gamma)|\cdot e^{-R|Z(\gamma)|}<\infty,\end{equation}
\item[(ii)] {uncoupled}, if  $\<\gamma_1,\gamma_2\>=0$ for any two active classes $\gamma_1,\gamma_2\in \Gamma$.
 \end{itemize}

Given 
a BPS structure $(\Gamma, Z, \Omega)$ the doubled BPS structure takes the form
\[(\Gamma\oplus \Gamma^\dual, Z\oplus Z^\dual, \Omega),\]
where $\Gamma^\dual=\Hom_\bZ(\Gamma,\bZ)$ is the dual lattice, and $Z^\dual\colon \Gamma^\dual\to \bC$ is an arbitrary group homomorphism. We equip the doubled lattice \[\Gamma_D=\Gamma\oplus \Gamma^\dual\] with the non-degenerate skew-symmetric form
\begin{equation}
\label{job}\big\<(\gamma_1,\lambda_1), (\gamma_2,\lambda_2)\big\>=\<\gamma_1,\gamma_2\>+\lambda_1(\gamma_2)-\lambda_2(\gamma_1).\end{equation}
The central charge is defined by
\[Z(\gamma,\lambda)=Z(\gamma)+Z^\vee(\lambda),\]
and the BPS invariant $\Omega(\gamma,\lambda)$ is defined to be zero unless $\lambda=0$ in which case $\Omega(\gamma,0)=\Omega(\gamma)$. 

\subsection{Twisted torus}
Given 
a BPS structure $(\Gamma, Z, \Omega)$, we  consider 
the algebraic torus
\[\bT_+=\Hom_\bZ(\Gamma,\bC^*)\isom (\bC^*)^n,\]
and its co-ordinate ring (which is also the group ring of the lattice $\Gamma$)
\[\bC[\bT_+]=\bC[\Gamma]\isom \bC[y_1^{\pm 1}, \cdots, y_n^{\pm n}].\]
We write $y_\gamma\in \bC[\bT_+]$ for the  character of $\bT_+$ corresponding to an element $\gamma\in \Gamma$. The skew-symmetric form $\<-,-\>$ induces an invariant Poisson structure on $\bT_+$, given on characters by
\begin{equation}
\label{poisson}\{y_{\alpha}, y_{\beta}\}= \<\alpha,\beta\>\cdot y_{\alpha}\cdot y_{\beta}.\end{equation}
The twisted torus of the BPS structure is a torsor over $\bT_+$ defined by
\[\bT= \bT_-=\{g\colon \Gamma \to \bC^*: g(\gamma_1+\gamma_2)=(-1)^{\<\gamma_1,\gamma_2\>} g(\gamma_1)\cdot g(\gamma_2)\},\]
The co-ordinate ring of $\bT_-$ is spanned as a vector space by the twisted characters
$x_\gamma\colon \bT_-\to \bC^*$ tautologically defined by $x_\gamma(g)=g(\gamma)\in \bC^*$.  Thus
\begin{equation}\bC[\bT_-]=\bigoplus_{\gamma\in \Gamma} \bC\cdot x_\gamma, \qquad x_{\gamma_1}\cdot x_{\gamma_2}=(-1)^{\<\gamma_1,\gamma_2\>}\cdot x_{\gamma_1+\gamma_2}.\end{equation}
There is a Poisson bracket on $\bC[\bT_-]$ given on twisted characters by 
 \begin{equation}
\label{poisson2}\{x_{\alpha}, x_{\beta}\}= \<\alpha,\beta\>\cdot x_{\alpha}\cdot x_{\beta}.\end{equation}

Associated to any ray $\ell=\bR_{>0}\cdot z\subset \bC^*$ is a formal sum of twisted characters
\begin{equation}
\label{hamiltonian}\DT(\ell)=-\hspace{-.8em}\sum_{\gamma\in \Gamma: Z(\gamma)\in \ell} \DT(\gamma) \cdot x_\gamma.\end{equation}
The ray is called active if this sum is nonzero, that is, if it contains a point $Z(\gamma)$ for some active class $\gamma\in \Gamma$.
We would like to associate an automorphism $\bS(\ell)$ of the twisted torus $\bT$ to each active ray $\ell\subset \bC^*$ by taking the time 1 Hamiltonian flow of the function $\DT(\ell)$. In fact, as explained in the next subsection, the best we can hope for in general is a partially-defined automorphism of $\bT$.

\subsection{BPS automorphisms}

Let $(\Gamma, Z, \Omega)$ be a  convergent BPS structure and fix an acute sector $\Delta\subset \bC^*$. For each real number $R>0$ we define  $U_\Delta(R)\subset \bT$ to be the interior of the subset
\[\big\{g\in \bT: Z(\gamma)\in \Delta \text{ and } \Omega(\gamma)\neq 0\implies |g(\gamma)|<\exp(-R\|\gamma\|)\big\}\subset \bT.\]
It is proved in \cite[Appendix B]{RHDT} that this  is a non-empty open subset.
The height of an active ray $\ell\subset \bC^*$ is defined to be \[\height(\ell)=\inf\big\{|Z(\gamma)|: \gamma \in \Gamma\text{ such that } Z(\gamma)\in \ell\text{ and } \Omega(\gamma)\neq 0\big\}.\]Non-active rays are considered to have infinite height. The support property  ensures that for any $H>0$ there are only finitely many rays of height $<H$.
The following statement can be found in \cite[Section 4]{RHDT} and is proved in \cite[Appendix B]{RHDT}.
 
\begin{prop}
\label{propy}
For sufficiently large $R>0$ the following statements hold:
\begin{itemize}
\item[(i)] For each ray $\ell\subset \Delta$, the power series $\DT(\ell)$ is absolutely convergent on $U_\Delta(R)$, and hence defines a holomorphic function \[\DT(\ell)\colon U_\Delta(R)\to \bC.\]
\item[(ii)] The time 1 Hamiltonian flow of the function $\DT(\ell)$  with respect to the Poisson structure $\{-,-\}$ on $\bT$ defines a holomorphic  embedding
\[\bS(\ell)\colon U_\Delta(R)\to \bT,\]
which we view as a partially-defined automorphism of $\bT$.
\item[(iii)] For each $H>0$, the composition in clockwise order
\[\bS_{<H}(\Delta) = \bS_{\ell_1} \circ \bS_{\ell_2} \circ \cdots \circ \bS_{\ell_k},\]
 corresponding to the finitely many rays $\ell_i\subset \Delta$ of height $< H$ is well-defined on $U_\Delta(R)$, as is the pointwise limit \[\bS(\Delta)=\lim_{H\to \infty} \bS_{<H}(\Delta).\] 
\end{itemize}
\end{prop}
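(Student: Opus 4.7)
The strategy is to treat all three parts as consequences of a single absolute-convergence estimate derived from the support property \eqref{support} together with the convergence property \eqref{concon}. The key preliminary observation is that linearity of $Z$ automatically gives an upper bound $|Z(\gamma)| \leq C'\|\gamma\|$, so on active classes one has a two-sided comparison $C\|\gamma\| < |Z(\gamma)| \leq C'\|\gamma\|$; this allows one to interchange the decay rates $e^{-R\|\gamma\|}$ and $e^{-R|Z(\gamma)|}$ at the cost of rescaling $R$ by a constant factor, and it also guarantees that only finitely many active classes $\gamma$ have $\|\gamma\|$ or $|Z(\gamma)|$ below any fixed bound.

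For (i), fix $g \in U_\Delta(R)$ and bound $|x_\gamma(g)| < e^{-R\|\gamma\|} \leq e^{-(R/C')|Z(\gamma)|}$ for each active class $\gamma$ with $Z(\gamma) \in \ell$. Choosing $R > C'R_0$, where $R_0$ is the constant of \eqref{concon}, and using the standard multicover relation to control $|\DT(\gamma)|$ by a sum of $|\Omega(\gamma/k)|/k^2$, the series $\sum_{Z(\gamma)\in \ell}|\DT(\gamma)|\,|x_\gamma(g)|$ is dominated termwise by a convergent tail of \eqref{concon}, giving a holomorphic function by uniform convergence on compact subsets of $U_\Delta(R)$. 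For (ii), I would integrate the Hamiltonian vector field $X_H = \{\DT(\ell),-\}$ acting on a twisted character $x_\alpha$: iterated brackets $X_H^n(x_\alpha)$ produce only monomials of the form $x_{\alpha+\gamma_1+\cdots+\gamma_n}$ with each $Z(\gamma_i) \in \ell$, so the support property forces $\|\alpha+\sum \gamma_i\|$ to drift outward at rate comparable to $n$. This yields a filtration on the formal flow, and the estimate of (i) upgrades the formal exponential $\exp(X_H)(x_\alpha)$ to an absolutely convergent expression; possibly enlarging $R$, the flow becomes a holomorphic embedding.

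For (iii), the support property guarantees that only finitely many rays $\ell \subset \Delta$ have height less than any given $H$, so the finite composition $\bS_{<H}(\Delta)$ is unambiguous. Each extra factor $\bS(\ell)$ for a ray of height $\geq H$ perturbs every coordinate $x_\alpha$ by a factor $1 + O(e^{-cH})$ uniformly on compact subsets of $U_\Delta(R)$; the partial products therefore converge pointwise, and the limit is holomorphic by Weierstrass. The main obstacle I anticipate lies in part (ii): when active classes on a single ray have nonzero mutual pairings, one must justify that the formal exponential of $X_H$ converges analytically, not just in a completed Poisson algebra. I expect the cleanest route is to first prove convergence for truncations of $\DT(\ell)$ to classes of bounded norm, where the Kontsevich--Soibelman product formula $x_\alpha \mapsto x_\alpha \prod_\gamma (1 - \sigma(\gamma) x_\gamma)^{\Omega(\gamma)\<\gamma,\alpha\>}$ applies verbatim, and then pass to the limit over truncations using the uniform estimate from part (i).
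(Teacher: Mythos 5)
First, a point of reference: the paper does not prove Proposition \ref{propy} at all --- it is quoted from \cite{RHDT} and the proof lives in \cite[Appendix B]{RHDT} --- so there is no in-paper argument to compare against. That said, your overall architecture is the right one and matches the cited proof in shape: the two-sided comparison $C\|\gamma\|<|Z(\gamma)|\le C'\|\gamma\|$ (support property plus linearity of $Z$), termwise domination of $\sum|\DT(\gamma)||x_\gamma(g)|$ by a tail of \eqref{concon} after handling multiples via the multicover relation and $|g(k\gamma)|=|g(\gamma)|^k$, and, for (iii), summability over rays of the deviation of each $\bS(\ell)$ from the identity. Parts (i) and (iii) are essentially fine, modulo the routine but necessary check in (iii) that each partial composition lands back inside a set $U_\Delta(R')$ on which the next factor is defined.

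The genuine gap is in part (ii), which you correctly flag as the hard case but do not close. Your fallback route is actually wrong: the product formula $x_\alpha\mapsto x_\alpha\prod_\gamma(1-x_\gamma)^{\Omega(\gamma)\<\gamma,\alpha\>}$ does \emph{not} ``apply verbatim'' to a truncation of $\DT(\ell)$ when the retained classes have nonzero mutual pairings --- that formula is precisely Proposition \ref{birat}, whose hypothesis (ii) is that all pairings between classes on the ray vanish; for non-commuting classes the time-$1$ flow of the sum is not the product of the individual factors, so truncate-and-pass-to-the-limit does not reduce the general case to the uncoupled one. Your primary route (direct convergence of $\exp(X_H)$) is the correct one, but the assertion that ``the estimate of (i) upgrades the formal exponential to an absolutely convergent expression'' is exactly the statement requiring proof. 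Concretely, $X_H^n(x_\alpha)$ is a sum over $(\gamma_1,\dots,\gamma_n)$ with coefficients $\prod_{i}\DT(\gamma_i)\<\gamma_i,\alpha+\gamma_1+\cdots+\gamma_{i-1}\>$, which grow factorially in $n$; one must show this is beaten by the $1/n!$ of the exponential together with the decay $|x_{\alpha+\sum\gamma_i}(g)|\le|x_\alpha(g)|\prod_i e^{-R\|\gamma_i\|}$, using crucially that all $Z(\gamma_i)$ lie on the single ray $\ell$ so that $|Z(\sum\gamma_i)|=\sum|Z(\gamma_i)|$ and hence $\|\sum\gamma_i\|$ genuinely grows linearly in $n$. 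This yields an $n$-th term of size $O(\epsilon(R)^n)$ with $\epsilon(R)\to0$ as $R\to\infty$, whence a convergent series and a well-defined holomorphic time-$1$ flow. Without this estimate the existence of $\bS(\ell)$ --- and therefore everything in (iii) --- is not established.
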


The following result is proved in \cite[Appendix B]{RHDT}.

\begin{prop}
\label{birat}
Fix a ray $\ell\subset \bC^*$ and suppose that the following conditions are satisfied:
\begin{itemize}
\item[(i)] there are only finitely many active classes $\gamma\in \Gamma$ with $Z(\gamma)\in \ell$.\smallskip

\item[(ii)] any two active classes $\gamma_i\in \Gamma$ with $Z(\gamma_i)\in \ell$ satisfy $\<\gamma_i,\gamma_j\>=0$,\smallskip

\item[(iii)]  any active class $\gamma\in \Gamma$ with $Z(\gamma)\in \ell$ has $\Omega(\gamma)\in \bZ$.
\end{itemize}
Then the partially-defined automorphism $\bS(\ell)$ of Proposition \ref{propy} extends to a birational automorphism of $\bT$ whose pullback on twisted characters is given by
\begin{equation}\bS(\ell)^*(x_\beta)=x_\beta\cdot \prod_{Z(\gamma)\in \ell}(1-x_\gamma)^{\,\Omega(\gamma)\<\gamma,\beta\>}.\end{equation}
\end{prop}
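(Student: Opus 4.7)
The plan is to exploit the uncoupling hypothesis (ii) to reduce the Hamiltonian flow defining $\bS(\ell)$ to a linear ODE with constant coefficients. By hypothesis (i) the generating function $H=\DT(\ell)=\sum_{Z(\gamma)\in \ell} \DT(\gamma)\, x_\gamma$ is a finite sum, hence a regular (Laurent polynomial) function on $\bT$, so its Hamiltonian flow with respect to the bracket \eqref{poisson2} is well-defined on all of $\bT$. Hypothesis (ii) combined with \eqref{poisson2} gives $\{x_{\gamma_i},x_{\gamma_j}\}=\<\gamma_i,\gamma_j\>\,x_{\gamma_i}x_{\gamma_j}=0$ for every pair of active classes $\gamma_i,\gamma_j$ on the ray, so the twisted characters $x_{\gamma_i}$ mutually Poisson-commute and, in particular, are constants of motion for the flow of $H$.

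Given this, I would compute the Hamiltonian action on an arbitrary twisted character:
\[
\{H,x_\beta\}=\sum_{Z(\gamma)\in\ell}\DT(\gamma)\,\<\gamma,\beta\>\,x_\gamma\cdot x_\beta.
\]
Writing $\phi_t$ for the flow and $f(t)=\phi_t^* x_\beta$, the $x_\gamma$ factors are constant along $\phi_t$, so $f$ satisfies $f'(t)=\lambda\, f(t)$ with constant $\lambda=\sum_\gamma \DT(\gamma)\<\gamma,\beta\>x_\gamma$. Integrating from $0$ to $1$ yields the closed-form pullback
\[
\bS(\ell)^*(x_\beta)=x_\beta\cdot\exp\Bigg(-\sum_{Z(\gamma)\in\ell}\DT(\gamma)\,\<\beta,\gamma\>\,x_\gamma\Bigg),
\]
using $\<\gamma,\beta\>=-\<\beta,\gamma\>$.

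The main step will be to recognise this exponential as the claimed product. For twisted characters one has $x_\alpha^n=x_{n\alpha}$ because $\<\alpha,\alpha\>=0$ makes the sign in $x_\alpha\cdot x_\alpha=(-1)^{\<\alpha,\alpha\>}x_{2\alpha}$ trivial. Hence, formally,
\[
-\log(1-x_\alpha)=\sum_{n\geq 1}\frac{x_{n\alpha}}{n}.
\]
Substituting this into $\log\prod_\gamma (1-x_\gamma)^{\Omega(\gamma)\<\beta,\gamma\>}$, reindexing $\gamma'=n\gamma$, and using $\<\beta,\gamma'\>=n\<\beta,\gamma\>$, one obtains the same coefficient of $x_{\gamma'}$ as in the exponent above, provided the multicover relation $\DT(\gamma')=\sum_{n\mid\gamma'}\Omega(\gamma'/n)/n^2$ holds. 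This relation is precisely how the rational DT invariants $\DT(\gamma)$ in \eqref{hamiltonian} are produced from the integer BPS invariants $\Omega$, so the two expressions agree; note that every active $\gamma'$ of the form $n\gamma$ with $\Omega(\gamma)\neq 0$ has $Z(\gamma')\in \ell$ too, matching the sum.

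Finally, hypothesis (iii) together with the integrality of the skew form ensures that the exponents $\Omega(\gamma)\<\beta,\gamma\>$ are integers, so the product is a well-defined rational function on $\bT$, giving a birational automorphism. The hardest part of writing this out carefully is the bookkeeping in the multicover identification of the exponential and the product; the surrounding analytic structure—existence of the flow, agreement with the partially-defined holomorphic map of Proposition \ref{propy} on $U_\Delta(R)$—is immediate from the fact that $H$ is a Laurent polynomial and both formulas for $\bS(\ell)^*(x_\beta)$ agree with the Taylor expansion of the flow.
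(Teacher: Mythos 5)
The paper does not actually prove this proposition here: it is quoted from \cite[Appendix~B]{RHDT}, and your argument is essentially the standard one given there --- use uncoupling to make the twisted characters $x_{\gamma}$ on the ray Poisson-central, so the flow of $\DT(\ell)$ linearises on each $x_\beta$, then resum the exponential into the product via the multicover relation $\DT(\gamma')=\sum_{n\mid\gamma'}\Omega(\gamma'/n)/n^2$. That resummation, and the observation that (iii) plus integrality of $\<-,-\>$ makes the exponents integers and hence the product a rational function, are exactly the right points, and your bookkeeping checks out.

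There is, however, one incorrect claim in your setup: hypothesis (i) does \emph{not} make $H=\DT(\ell)$ a finite sum. Finiteness of the set of \emph{active} classes on $\ell$ bounds the set of $\gamma$ with $\Omega(\gamma)\neq 0$, but $\DT(n\gamma)$ contains the contribution $\Omega(\gamma)/n^2$ for every $n\geq 1$, so $\DT(\ell)$ as defined in \eqref{hamiltonian} is genuinely an infinite series (the paper calls it a ``formal sum'' for this reason), and its flow is certainly not defined on all of $\bT$ --- indeed the resulting product has zeros and poles along $x_\gamma=1$, which is why the conclusion is only a \emph{birational} automorphism. The repair is harmless: Proposition \ref{propy} already gives convergence of the series and existence of the flow on $U_\Delta(R)$; your ODE computation and the $\exp$-to-product identity are then valid there as identities of convergent series, and since the closed-form answer $x_\beta\cdot\prod_{Z(\gamma)\in\ell}(1-x_\gamma)^{\Omega(\gamma)\<\beta,\gamma\>}$ is a finite product of rational functions (finite precisely by (i), applied to the product rather than to $H$), the map extends birationally to $\bT$. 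With that substitution your proof is complete and agrees with the cited one.
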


\subsection{Riemann-Hilbert problem}
Let $(\Gamma, Z, \Omega)$ be a convergent BPS structure. Given a ray $\ell\subset \bC^*$ we consider the corresponding half-plane 
\[\bH_\ell=\ell\cdot \fh=\{z\in \bC^*:z=u\cdot v \text{ with } u\in \ell\text{ and }\Re(v)>0\},\]
 centered on it.
We shall be dealing with functions  $\Phi_\ell\colon \bH_\ell\to  \bT.$ Composing with the twisted characters of $\bT$ we can equivalently consider functions
\[\Phi_{\ell,\gamma}\colon \bH_\ell\to \bC^*,\qquad \Phi_{\ell,\gamma}(t)=x_\gamma(\Phi_\ell(t)).\]
The Riemann-Hilbert problem associated to the BPS structure $(\Gamma, Z, \Omega)$ depends on a choice of element $\xi\in \bT$ which we refer to as the constant term.

\begin{problem}
\label{dtsect}
Fix an element $\xi\in \bT$. For each non-active ray $\ell\subset \bC^*$ we  seek a holomorphic function
$\Phi_\ell\colon \bH_\ell\to \bT$
such that the following three conditions are satisfied:
\begin{itemize}

\item[(RH1)] {\it Jumping.} Suppose that two  non-active rays $\ell_1,\ell_2\subset \bC^*$ form the  boundary rays of an acute sector $\Delta\subset \bC^*$ taken in clockwise order. Then
\[\Phi_{\ell_2}(t)= \bS(\Delta) \circ \Phi_{\ell_1}(t), \]
for all $t\in \bH_{\ell_1}\cap \bH_{\ell_2}$ with  $ 0<|t|\ll 1$.
\smallskip

\item[(RH2)] {\it Finite limit at $0$}.
For each non-active ray $\ell\subset \bC^*$ and each class $\gamma\in \Gamma$ we have
\[\exp(Z(\gamma)/t)\cdot \Phi_{\ell,\gamma}(t) \to \xi(\gamma)\]
as $t\to 0$ in the half-plane $\bH_\ell$.\smallskip

\item[(RH3)] {\it Polynomial growth at $\infty$}. For any class $\gamma\in \Gamma$ and any non-active ray $\ell\subset \bC^*$, there exists $k>0$ such that
\[|t|^{-k} < |\Phi_{\ell,\gamma}
(t)|<|t|^k,\]
for  $t\in \bH_\ell$ satisfying $|t|\gg 0$.
\end{itemize}
\end{problem}

To make sense of the condition (RH1) note that condition (RH2) implies that  for any $R>0$ we have $\Phi_{\ell_i}(t)\in U_\Delta(R)$ whenever  $t\in \bH_{\ell_1}\cap \bH_{\ell_2}$ with  $ 0<|t|\ll 1$. But we can find  $R>0$ such that the partially-defined automorphism $\bS(\Delta)$ is well-defined on $U_{\Delta}(R)\subset \bT$.  Thus  the given relation does indeed make sense.

It will be useful to consider the maps $\Psi_\ell\colon \bH_\ell\to \bT_+$ defined by
\[\exp(Z/t)\cdot \Phi_\ell(t)=\Psi_\ell(t)\cdot \xi.\]
 Composing with the characters of $\bT_+$ we can also encode the solution in the system of maps
\[\Psi_{\ell,\gamma}(t)=x_\gamma(\Psi_\ell(t))=\exp(Z(\gamma)/t)\cdot \Phi_{\ell,\gamma}(t) \cdot \xi(\gamma)^{-1}.\]
Of course the maps $\Phi_\ell$ and $\Psi_\ell$ are equivalent data: we use whichever is most convenient.

An element $\gamma\in \Gamma$ will be called null if it satisfies $\<\alpha,\gamma\>=0$ for all active classes $\alpha\in \Gamma$. Note that the definition of the wall-crossing automorphisms $\bS(\ell)$ then implies that
$\bS(\ell)^*(x_\gamma)=x_\gamma$.
The following result is proved in  \cite[Section 4.5]{RHDT}.

\begin{prop}
\label{thisone}
Let $(\Gamma,Z,\Omega)$ be a convergent BPS structure, and fix a constant term $\xi\in \bT$.
\begin{itemize}
\item[(i)]If a class $\gamma\in \Gamma$ is null then any solution to Problem \ref{dtsect} satisfies
$\Psi_{\ell, \gamma}(t)=1$
for all $t\in \bH_\ell$.\smallskip

\item[(ii)] If $(\Gamma,Z,\Omega)$ is uncoupled then Problem \ref{dtsect} has at most one solution.
\end{itemize}
\end{prop}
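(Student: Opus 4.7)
The plan is to handle (i) first by showing that a null class forces the corresponding character to glue into a globally defined holomorphic function that is then pinned down by its asymptotics, and to deduce (ii) from (i) together with a ratio argument, after observing that in the uncoupled case every active class is already null.

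For part (i) the key algebraic input is that each $\bS(\ell)$ is the time-$1$ Hamiltonian flow of $\DT(\ell) = \sum_\alpha \Omega(\alpha)\, x_\alpha$, and the Poisson bracket $\{x_\alpha, x_\gamma\} = \<\alpha, \gamma\>\, x_\alpha x_\gamma$ vanishes whenever $\gamma$ is null and $\alpha$ is active. Hence every $\bS(\ell)$ fixes $x_\gamma$, and the jumping condition (RH1) forces the pieces $\Phi_{\rr, \gamma}$ to agree on overlaps of adjacent half-planes and glue to a single holomorphic function $\Phi_\gamma \colon \bC^* \to \bC^*$. Setting $\Psi_\gamma(t) = \exp(Z(\gamma)/t) \cdot \Phi_\gamma(t) \cdot \xi(\gamma)^{-1}$, I would use (RH2) to extend $\Psi_\gamma$ holomorphically across $0$ with value $1$, and (RH3) together with the boundedness of $\exp(Z(\gamma)/t)$ near infinity to conclude that $\Psi_\gamma$ has polynomial growth on all of $\bC$. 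A standard Cauchy-estimate argument identifies $\Psi_\gamma$ with a polynomial, which must be a nonzero constant since $\Phi_\gamma$ takes values in $\bC^*$; the value $\Psi_\gamma(0) = 1$ then forces $\Psi_\gamma \equiv 1$.

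For part (ii), the uncoupled hypothesis is precisely the statement that every active class is itself null, so part (i) already yields $\Psi_{\rr, \alpha}(t) \equiv 1$ for every active $\alpha$. Given two solutions $\Phi^{(1)}$, $\Phi^{(2)}$ and an arbitrary class $\gamma$, I would form the ratio $\rho_{\rr, \gamma}(t) = \Phi^{(1)}_{\rr, \gamma}(t)/\Phi^{(2)}_{\rr, \gamma}(t)$. Because the jump of $x_\gamma$ across any active ray is determined entirely by the twisted characters at active classes (via the explicit formula in Proposition \ref{birat}, or, more intrinsically, the Hamiltonian flow), on which the two solutions agree, the $\rho_{\rr, \gamma}$ glue without discontinuity to a holomorphic function $\rho_\gamma \colon \bC^* \to \bC^*$. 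The exponential prefactors cancel in the ratio, so the same Liouville-type argument as in (i) gives $\rho_\gamma \equiv 1$, proving $\Phi^{(1)} = \Phi^{(2)}$.

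The main obstacle in both halves is the same global analytic step: passing from the local per-ray bounds in (RH3) to a genuine polynomial bound on $\Psi_\gamma$ (respectively $\rho_\gamma$) after gluing, and then upgrading ``polynomial'' to ``constant'' using the nowhere-vanishing condition coming from $\bC^*$-valuedness. Once these analytic points are in place, the algebraic bookkeeping --- nullity implying Poisson invariance, uncoupledness reducing to the null case --- is routine.
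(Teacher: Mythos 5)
Your proposal is correct and follows essentially the same route as the proof the paper relies on (given in \cite[Section 4.5]{RHDT}): nullity makes $x_\gamma$ invariant under every $\bS(\ell)$, so the pieces glue to a single function on $\bC^*$ whose normalization $\Psi_\gamma$ is pinned down by a Liouville-type argument using (RH2) and (RH3); uncoupledness means every active class is null, and the ratio of two solutions then has no jumps and is handled the same way. The only points worth making explicit are that the identity theorem is what upgrades the agreement for $0<|t|\ll 1$ in (RH1) to agreement on the whole (connected) overlap $\bH_{\rr_1}\cap\bH_{\rr_2}$, and that the half-planes $\bH_\rr$ over non-active rays do cover $\bC^*$.
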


\subsection{The $\tau$-function}

A variation of BPS structures $(\Gamma_p,Z_p,\Omega_p)$ over a complex manifold $M$ consists of a family of BPS structures indexed by the points $p\in M$ satisfying certain axioms, the most important of which is the Kontsevich-Soibelman wall-crossing formula, which describes the way  the BPS invariants $\Omega_p(\gamma)$ change as the point $p\in M$ moves. A complete definition is given in \cite[Appendix A]{RHDT}. 

For the purposes of this paper however, it is sufficient to consider the much simpler  notion of a  framed variations of uncoupled BPS structures over a complex manifold $M$, which is nothing more than a family of uncoupled BPS structures $(\Gamma,Z_p,\Omega)$ indexed by the points $p\in M$, such that the lattice $\Gamma$, the form $\<-,-\>$,  and the BPS invariants $\Omega(\gamma)$ are all constant, and such that for any $\gamma\in \Gamma$, the central charge $Z_p(\gamma)\in  \bC$ varies holomorphically. 

Given a variation of BPS structures, the obvious  map
\[\pi\colon M\to \Hom_\bZ(\Gamma,\bC)\isom \bC^n, \quad p\mapsto Z_p\]
is called the period map, 
and  the variation is called miniversal if it is  a local isomorphism. In that case, if we choose a basis $(\gamma_1,\cdots, \gamma_n)\subset \Gamma$, the functions $z_i=Z(\gamma_i)$ form a system of local co-ordinates near any given point of $M$.

For each point $p\in M$ we can consider the Riemann-Hilbert problem associated to the BPS structure $(\Gamma,Z_p,\Omega)$, and as $p\in M$ varies seek a family of solutions  given by a piecewise-holomorphic map
\[\Psi\colon M\times \bC^*\to \bT_+,\]
 which we view as a  function of the co-ordinates  $(z_1,\cdots, z_n)\in \bC^n$ and the parameter $t\in \bC^*$.  We define a  $\tau$-function for this  family of solutions to be a piecewise-holomorphic map \[\tau\colon M\times \bC^* \to \bC^*,\]
 which is invariant under simultaneous  rescaling of all co-ordinates $z_i$ and the parameter $t$, and which satisfies  the equations
 \begin{equation}
\label{hungry}\frac{1}{2\pi i}\cdot \frac{\partial \log \Psi_{\gamma_j}}{\partial t}=\sum_i \<\gamma_i,\gamma_j\> \frac{\partial \log \tau}{\partial z_i}. \end{equation}
When the form $\<-,-\>$ is non-degenerate these conditions  determine $\tau$ uniquely up to multiplication by a constant scalar factor. The author does not yet have a good explanation of why such a function $\tau$ should exist in general, but in the case of the variation of BPS structures associated to the resolved conifold, such a function can indeed be defined.


\section{The conifold  Riemann-Hilbert problem}
\label{bps}

In this section we give an explicit description of the family of Riemann-Hilbert problems associated to the resolved conifold.
The starting point is the  family of BPS structures arising from DT theory applied to the derived category of coherent sheaves on the resolved conifold. We recall the relevant results on the space of stability conditions and the  DT invariants in Appendix A, but it is not necessary to understand this material to follow the rest of the paper.

\subsection{The BPS structures}
\label{skip}

The BPS structures we shall consider depend on a point  in the space
\[M=\big\{(v,w)\in \bC^2: w\neq 0 \text{ and }v+nw\neq 0\text { for all }n\in \bZ\big\}\subset \bC^2.\]
Mathematically speaking, as we recall in Appendix A, this is the space
 of stability conditions on the derived category of the resolved conifold, quotiented by the subgroup of the autoequivalence group  generated by spherical twists. From the physical standpoint it can be thought of as the smallest unramified cover of the natural  $\bC^*$-bundle over the stringy K{\"a}hler moduli space on which the central charges of branes are single-valued.
We decompose \[M=M_+\sqcup M_0\sqcup M_-\]  according to the sign of $\Im(v/w)$.
The BPS structure $(\Gamma_{\leq 1},Z_{\leq 1},\Omega)$ corresponding to a point $(v,w)\in M$ is given by

\begin{itemize}
\item[(i)]
The lattice $\Gamma_{\leq 1}=\bZ\beta\oplus \bZ\delta$ with the form $\<-,-\>=0$.\smallskip

\item[(ii)]The central charge $Z_{\leq 1}\colon \Gamma_{\leq 1}\to \bC$ defined by
\[Z_{\leq 1}(a\beta+b\delta)=2\pi i(av+bw).\]

\item[(iii)]
The non-zero BPS invariants
\begin{equation}
\label{partiii}\Omega(\gamma)=\begin{cases} 1 &\text{if }\gamma=\pm \beta+n\delta \text{ with } n\in \bZ,\\
-2 &\text{if }\gamma=k\delta\text{ with }k\in \bZ\setminus\{0\}.\end{cases}\end{equation}
\end{itemize}

Together these structures form a framed and miniversal variation of uncoupled BPS structures over $M$. The BPS invariants are constant because  $\<-,-\>=0$. 
We also consider the corresponding doubled BPS structures. As in the introduction we denote these by $(\Gamma,Z,\Omega)$ and use the notation
\[\Gamma=\Gamma_{\leq 1} \oplus \Gamma_{\geq 2}, \qquad \Gamma_{\geq 2}:=\Gamma_{\leq 1}^\vee=\Hom_\bZ(\Gamma_{\leq 1},\bZ).\]
We denote by $(\beta^\vee,\delta^\vee)\subset \Gamma_{\geq 2}$ the dual basis to $(\beta,\delta)\subset \Gamma_{\leq 1}$. 
The map $\Omega\colon \Gamma\to \bQ$ satisfies $\Omega(\gamma)=0$ unless $\gamma\in \Gamma_{\leq 1}$. The central charge takes the form \[Z=Z_{\leq 1}\oplus Z_{\geq 2}\colon \Gamma \to \bC,\] where the group homomorphism $Z_{\geq 2}\colon \Gamma_{\geq 2}\to \bC$
is arbitrary. The resulting BPS  structures are all convergent, because
\[\sum_{\gamma\in \Gamma} |\Omega(\gamma)|\cdot e^{-|Z(\gamma)|}=2\sum_{n\in \bZ} e^{-|v+nw|} + 4\sum_{k>0} e^{-k|w|}<\infty.\]
Thus we obtain a  framed and miniversal variation of convergent, uncoupled  BPS structures  over the cotangent bundle $T^*M$.

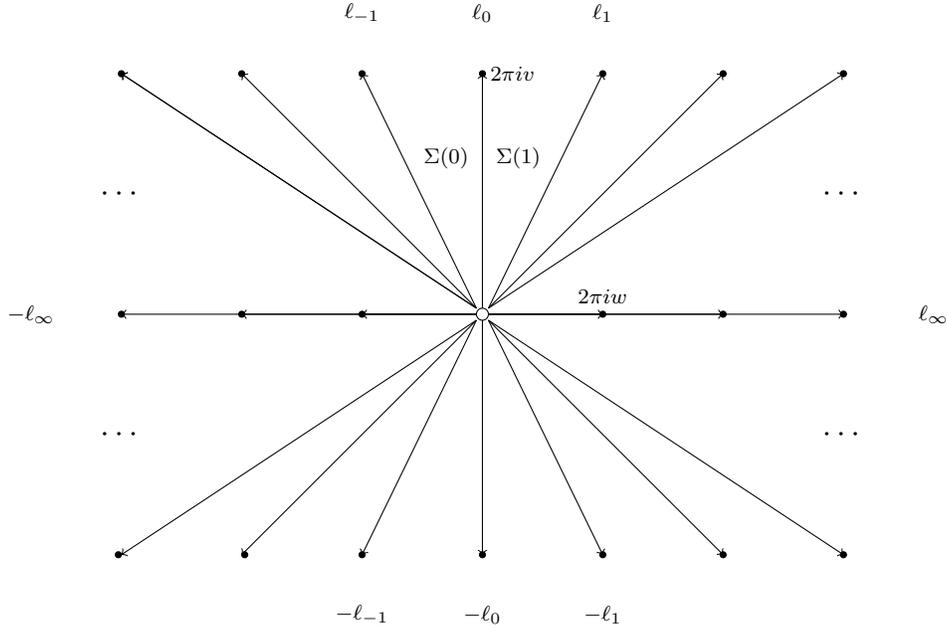
\begin{figure}
\begin{tikzpicture}
[scale=0.8]
\draw (5,0) circle [radius=0.1];
\draw[->] (4.9,0.1) -- (-1,4);
\draw[->] (4.9,0.1) -- (1,4);
\draw[->] (4.9,0.1) -- (3,4);
\draw[->] (5,0.1) -- (5,4);
\draw[->] (5.1,0.1) -- (7,4);
\draw[->] (5.1,0.1) -- (9,4);
\draw[->] (5.1,0.1) -- (11,4);

\draw[->] (4.9,0.1) -- (-1,4);

\draw[fill] (-1,4) circle [radius=0.05];
\draw[fill] (1,4) circle [radius=0.05];
\draw[fill] (3,4) circle [radius=0.05];
\draw[fill] (5,4) circle [radius=0.05];
\draw[fill] (7,4) circle [radius=0.05];
\draw[fill] (9,4) circle [radius=0.05];
\draw[fill] (11,4) circle [radius=0.05];


\draw[->] (4.9,-0.1) -- (-1,-4);
\draw[->] (4.9,-0.1) -- (1,-4);
\draw[->] (4.9,-0.1) -- (3,-4);
\draw[->] (5,-0.1) -- (5,-4);
\draw[->] (5.1,-0.1) -- (7,-4);
\draw[->] (5.1,-0.1) -- (9,-4);
\draw[->] (5.1,-0.1) -- (11,-4);

\draw[fill] (-1.05,-4) circle [radius=0.05];
\draw[fill] (1.05,-4) circle [radius=0.05];
\draw[fill] (3,-4) circle [radius=0.05];
\draw[fill] (5,-4) circle [radius=0.05];
\draw[fill] (7,-4) circle [radius=0.05];
\draw[fill] (9,-4) circle [radius=0.05];
\draw[fill] (11,-4) circle [radius=0.05];


\draw[->] (4.9,0) -- (3,0); \draw[->] (4.9,0) -- (1,0); \draw[->] (4.9,0) -- (-1,0);
\draw[->] (5.1,0) -- (7,0); \draw[->] (5.1,0) -- (9,0); \draw[->] (5.1,0) -- (11,0);

\draw[fill] (3,0) circle [radius=0.05];
\draw[fill] (1,0) circle [radius=0.05];
\draw[fill] (-1,0) circle [radius=0.05];
\draw[fill] (7,0) circle [radius=0.05];
\draw[fill] (9,0) circle [radius=0.05];
\draw[fill] (11,0) circle [radius=0.05];


\draw (11,2) node { $\cdots$};
\draw (-1,2) node { $\cdots$};
\draw (11,-2) node { $\cdots$};
\draw (-1,-2) node { $\cdots$};

\draw (5,5) node {$\scriptstyle \ell_0$};
\draw (3,5) node {$\scriptstyle \ell_{-1}$};
\draw (7,5) node {$\scriptstyle \ell_{1}$};
\draw (-2.5,0) node {$\scriptstyle -\ell_{\infty}$};
\draw (12.5,0) node {$\scriptstyle \ell_{\infty}$};

\draw (5,-5) node {$\scriptstyle -\ell_0$};
\draw (3,-5) node {$\scriptstyle -\ell_{-1}$};
\draw (7,-5) node {$\scriptstyle -\ell_{1}$};

\draw (4.4,2.6) node {$\scriptstyle \Sigma(0)$};
\draw (5.6,2.6) node {$\scriptstyle \Sigma(1)$};
\draw (7,0.3) node {$\scriptstyle 2\pi i w$};
\draw (5.5,4) node {$\scriptstyle 2\pi i v$};


\end{tikzpicture}
\caption{The ray diagram associated to a point $(v,w)\in M_+$.
\label{fig}}
\end{figure}

\subsection{BPS automorphisms}
Let us fix a point $(v,w)\in M_+$.  Define rays
\[\ell_\infty=\bR_{>0}\cdot 2\pi i w,\qquad \ell_n=\bR_{>0}\cdot 2\pi i (v+nw)\subset \bC^*.\]
The active rays for the  corresponding BPS structure  $(\Gamma,Z,\Omega)$ defined above  are precisely the rays $\pm \ell_\infty$ and $\pm \ell_n$ for $n\in \bZ$. The corresponding ray diagram is illustrated in Figure \ref{fig}. We let $\Sigma(n)$ be the convex open sector with boundary rays $\ell_{n-1}$ and $\ell_{n}$. 
Note that the union of the active rays is a closed subset of $\bC^*$ whose open complement is the disjoint union of the open sectors $\pm \Sigma(n)\subset \bC^*$.

We shall now describe explicitly the BPS automorphisms $\bS(\ell)$ of the twisted torus $\bT$ associated to the doubled lattice $\Gamma=\Gamma_{\leq 1}\oplus \Gamma_{\geq 2}$.  We denote by $x_\gamma\colon \bT\to \bC^*$ the twisted character corresponding to an element $\gamma\in \Gamma$. Since all active classes lie in $\Gamma_{\leq 1}\subset \Gamma$, and  the form $\<-,-\>$ is zero on $\Gamma_{\leq 1}$, it follows that all classes $\gamma\in \Gamma_{\leq 1}$ are null, and hence all BPS automorphisms act trivially on the corresponding twisted characters $x_\gamma$.

Proposition \ref{birat} shows that the BPS automorphism  associated to the ray $\ell_n$ takes the form
\[\bS(\ell_n)^*(x_\gamma)=x_\gamma\cdot (1-x_{\beta+n\delta})^{\<\beta+n\delta,\gamma\>}.\]
In particular, the twisted characters for the generators $(\beta^\vee,\delta^\vee)\subset \Gamma_{\geq 2}$ transform as
\begin{equation}
\label{da}\bS(\ell_n)^*(x_{\beta^\vee})=x_{\beta^\vee} \cdot 
 (1-x_{\beta+n\delta})^{-1}, \quad \bS(\ell_n)^*(x_{\delta^\vee})= x_{\delta^\vee}\cdot (1-x_{\beta+n\delta})^{-n}.\end{equation}
 
 Since the ray $\ell_\infty$ contains infinitely many active classes, Proposition \ref{birat} no longer applies. Nonetheless, Proposition \ref{propy} shows that $\bS(\ell_\infty)$ exists on a suitable open subset of $\bT$, and then the same calculation as  the proof of Proposition \ref{birat} (see \cite[Appendix B]{RHDT}) shows that its pullback on twisted characters is given by\[\bS(\ell_\infty)^*(x_\gamma)=x_\gamma\cdot \prod_{k\geq 1} (1-x_{k\delta})^{-2k\cdot \<\delta,\gamma\>}.\]
 It then follows that $\bS(\ell)$ extends to the analytic open subset of $\bT$ where $|x_\delta|<1$, but not to a Zariski open subset.
 The action on the basic twisted characters as above is
\[\bS(\ell_\infty)^*(x_{\beta^\vee})=x_{\beta^\vee}, \qquad \bS(\ell_\infty)^*(x_{\delta^\vee})= x_{\delta^\vee}\cdot \prod_{k\geq 1} (1-x_{k\delta})^{2k}.\]
   
\begin{remark}
\label{toro}There is no need to consider the active rays $-\ell_n$ and $-\ell_\infty$ separately, since as explained in \cite[Section 4.4]{RHDT}, for any ray $\ell\subset \bC^*$ there is a relation
\begin{equation}
\label{sym}\bS(-\ell)\circ \sigma=\sigma\circ \bS(\ell),\end{equation}
where $\sigma\colon \bT\to \bT$ is the involution  which acts on twisted characters as $x_\gamma\leftrightarrow x_{-\gamma}$.
\end{remark}

We shall also need to describe the BPS automorphisms $\bS(\Delta)$ associated to acute sectors $\Delta\subset \bC^*$. There are two possibilities:
either $\Delta$ contains a finite number of active rays, or it contains one of the two rays $\pm\ell_\infty$, and hence also an infinite number of the rays $\pm \ell_n$.
 In the first case the corresponding BPS automorphism $\bS(\Delta)$ is a finite composition of the birational automorphisms $\bS(\ell)$ and nothing more needs to be said.
For the second case, we can suppose by Remark \ref{toro} that  $\Delta$ contains the ray $\ell_\infty$. Since we understand finite compositions of the maps $\bS(\ell_n)$ it is enough to consider the extreme case when $\Delta$ is just less than a half-plane, so that its bounding rays lie in  sectors $\Sigma(m)$ and $-\Sigma(m)$, and without loss of generality we can take $m=0$.

The BPS automorphism $\bS(\Delta)$ is guaranteed to exist on some suitable open subset of $\bT$ by Proposition \ref{propy}. By definition it is the limit as $H\to \infty$ of the finite composition of BPS automorphisms corresponding to rays in $\Delta$ of height $< H$. 
Note that all the BPS automorphisms $\bS(\ell)$ commute  so there is no need to distinguish the order of these compositions.  Since the active rays contained in $\Sigma$ are
$\ell_n\text{ for } n\geq 0$,  $-\ell_{n}\text{ for } n<0$, and $ \ell_\infty$, it follows that $\bS(\Delta)$  satisfies
\begin{equation}\label{da2}\bS(\Delta)^*(x_\gamma)=x_\gamma \cdot \prod_{n\geq 0}  (1-x_{\beta+n\delta})^{\<\beta+n\delta,\gamma\>}\cdot \prod_{n\geq  1}(1-x_{-(\beta-n\delta)})^{-\<\beta-n\delta,\gamma\>} \cdot \prod_{k\geq 1}(1-x_{k\delta})^{-2k\cdot \<\delta,\gamma\>}\end{equation}
Once again, it follows that $\bS(\Delta)$ is well-defined on the analytic open subset $|x_\delta|<1$.

\subsection{The Riemann-Hilbert problem}

We now consider the Riemann-Hilbert problem defined by the doubled BPS structure $(\Gamma,Z,\Omega)$ corresponding to a point of $T^*M$, together with a fixed choice of constant term $\xi=(\xi_{\leq _1},\xi_{\geq 2})\in \bT$. Since these structures are uncoupled and convergent, Proposition \ref{thisone} ensures that there is at most one solution. We shall always assume that our constant term satisfies $\xi_{\leq 1}=1$, that is  that
$\xi(\gamma)=1$ for all $\gamma\in \Gamma_{\leq 1}$.
 We do not currently know how to solve the Riemann-Hilbert problem without this simplifying assumption.
  For now we shall also assume  that $(v,w)\in M_+$: for other cases see Section \ref{deg}.
 
 \begin{remark}
\label{canada}The symmetry \eqref{sym} implies that any solution  to the Riemann-Hilbert problem satisfies
 \begin{equation}
 \label{tor}\Phi^{\sigma(\xi)}_{-\ell,-\gamma}(-t)=\Phi_{\ell,\gamma}^{\xi}(t).\end{equation}
 Indeed, this follows from the observation of \cite[Section 4.4]{RHDT} once one has the uniqueness result of Proposition \ref{thisone}.
 \end{remark}
 
 Given the assumption $\xi_{\leq 1}=1$, our Riemann-Hilbert problem depends on the point $(v,w)\in M_+$,  together with the extra data of  homomorphisms  \begin{equation}
 \label{choices}Z^\vee\colon \Gamma_{\geq 1}\to \bC, \qquad \xi^\vee\colon \Gamma_{\geq 1}\to \bC^*.\end{equation}
The solution $\Phi_\ell\colon \bH_\ell\to \bT$ does not depend in a very interesting way on this extra data. In fact it is easy to see that  the maps $\Psi_\ell\colon \bH_\ell\to \bT_+$ defined by
\[\exp(Z/t)\cdot \Phi_\ell(t)=\Psi_\ell(t)\cdot \xi\]
are independent of $(Z^\vee,\xi^\vee)$. We shall therefore make the trivial choice $Z^\vee=0$ and $\xi^\vee=1$.

 Since all  classes $\gamma\in \Gamma$ are null, Proposition \ref{thisone} shows that
\begin{equation}
\label{awe}\Phi_{\ell,\gamma}^\xi(t)=e^{-Z(\gamma)/t},\end{equation}
for all non-active rays $\ell\subset \bC^*$, and all classes $\gamma\in \Gamma_{\leq 1}$.
It follows that a solution to the Riemann-Hilbert  is  specified by the functions
\[B_n(t)=B_n(v,w,t)=\Phi_{r_n,\beta^\vee}(t), \qquad D_n(t)=D_n(v,w,t)= \Phi_{r_n,\delta^\vee}(t),\]
where $r_n\subset \Sigma(n)$ is an arbitrary non-active ray lying in the given sector.
There is no need to consider the functions $\Phi_{\ell,\gamma}(t)$ for non-active rays $\ell\subset \bC^*$  lying in the opposite sectors $-\Sigma(n)$ since these are taken care of by  Remarks \ref{toro} and \ref{canada}  above.
Define the half-plane
\[\cH(n)=\{z\in \bC^*:z=ab\text{ with }a\in \ell_n\text{ and }\Re(b)>0\},\]
centered on the ray $\ell_n$.
Working out the conditions imposed on the functions $B_n$ and $D_n$ we obtain the following explicit version of the Riemann-Hilbert problem for the doubled BPS structure.

\begin{problem}
\label{tra}
Fix $(v,w)\in M_+$. For each $n\in \bZ$ find holomorphic functions $B_n(t)$ and $D_n(t)$  on the region\[\cV(n)=\cH(n-1)\cup\cH(n),\]
satisfying the following properties.
\begin{itemize}
\item[(i)] As $t\to 0$ in any closed subsector of $\cV(n)$ one has \[ B_n(t)\to  1, \qquad  D_n(t)\to  1.\]
\item[(ii)] For each $n\in \bZ$ there exists $k>0$ such that for  any closed subsector of $ \cV(n)$
\[ |t| ^{-k} <  |B_n(t)|,  |D_n(t)| < |t|^k, \qquad  |t| \gg 0.\]
\item[(iii)] On the intersection $\cH(n)=\cV(n)\cap \cV(n+1)$ there are relations
\[B_{n+1}(t)=B_{n}(t)\cdot (1-x q^{n} )^{-1}, \quad D_{n+1}(t)=D_{n}(t)\cdot (1-x q^{n})^{-n}.\]

\item[(iv)] Note that $\cV(0)\cap -\cV(0)=i \cdot \Sigma(0)\sqcup -i \cdot \Sigma(0)$. In the region $-i\cdot \Sigma(0)$  there are relations
\[B_0(t)\cdot B_0(-t)=\prod_{n\geq 0}\big(1-x q^{n}\big)\cdot \prod_{n\geq 1}\big(1-x^{-1} q^{n})^{-1},\]\begin{equation*} D_0(t)\cdot D_0(-t)=\prod_{n\geq 0}\big(1-xq^{n}\big)^{n}\cdot \prod_{n\geq 1}\big(1-x^{-1}  q^{n}\big)^{n}\cdot \prod_{k\geq 1} \big(1-q^{k}\big)^{-2k},\end{equation*}
\end{itemize}
where we used the notation \begin{equation}
\label{notat}x=\exp(-2\pi iv/ t), \qquad q=\exp(- 2\pi i w/t).\end{equation}
\end{problem}

Parts (iii) and (iv) arise from condition (RH1) of Problem \ref{dtsect}.
Part (iii) is obtained by plugging \eqref{awe} into \eqref{da}, and noting that the sectors $\Sigma(n)$ and $\Sigma(n+1)$ come in clockwise order. Similarly (iv) is obtained by plugging \eqref{awe} into \eqref{da2} and using \eqref{tor}.
Note that the last factor in the second equation of (iv) is the sole contribution of the  ray $\ell_\infty$. 

\begin{remark}
At first sight one might expect a simple solution to Problem \ref{tra}  in which
\[B_n(v,w,t)=\prod_{m\geq n}(1-xq^m).\]Although this function does indeed satisfy the relevant identity from Problem \ref{tra} (iii), it is not holomorphic, or even meromorphic, in the required half-plane. For example, $B_0(v,w,t)$, which is essentially the (exponential of) the quantum dilogarithm function, is ill-defined for $w/t\in \mathbb{Q}$, and thus fails to be holomorphic on a dense subset of the rays $\pm i\cdot \ell_\infty$.
\end{remark}

\subsection{Difference equations}
\label{diffrence}
Our variation of BPS structures carries  a 
 free action of $\bZ$. This symmetry will allow us to restate the above Riemann-Hilbert problem as a  pair of coupled difference equations.  Consider the action of $\bZ$ on the lattice $\Gamma$, preserving the form $\<-,-\>$, in which $m\in \bZ$ acts via
\[(\beta,\delta)\mapsto (\beta-m\delta, \delta),\quad (\beta^\vee,\delta^\vee)\mapsto (\beta^\vee,\delta^\vee+m\beta^\vee).\]
This induces an action on $T^*M$ by
\[(v,w) \mapsto (v+mw,w), \quad (v^\vee, w^\vee)\mapsto (v^\vee, w^\vee-mv^\vee).\]
More precisely, the map $m\colon \Gamma\to \Gamma$ defines an isomorphism between the BPS structure at a point $Z\in T^*M$, and the BPS structure at the point $m\cdot Z$. Note that   a point $t\in \bC^*$ lies in the sector $\Sigma(n)$ for the BPS structure defined by the point $(v,w)$ precisely if it lies in the sector $\Sigma(m+n)$ for the BPS structure defined by $(v-mw,w)$. A similar remark applies to the regions $\cH(n)$.

There is an obvious induced action on the constant terms $\xi\in \bT$, which preserves our choice $\xi=1$ and $\xi^\dual=1$. The uniqueness of solutions given by Proposition \ref{thisone} then implies that if we can solve Problem \ref{tra} for all $(v,w)\in M_+$  then the solution must satisfy
\begin{gather}\label{blob}B_n(v,w,t)= B_0(v+nw,w,t),\\ D_n(v,w,t)=D_0(v+nw,w,t) \cdot B_0(v+nw,w,t)^{n}.\end{gather}
We now restate Problem \ref{tra}  in terms of just two functions $B=B_0$ and $D=D_0$.

\begin{problem}
\label{tre}
Find holomorphic functions  $B(v,w,t)$ and $D(v,w,t)$  defined for $(v,w)\in M_+$ and $t\in \bC^*$ lying in the region
\[\cV(0)=\cH(-1)\cup\cH(0)\] satisfying the following properties:
\begin{itemize}
\item[(i)] For fixed $(v,w)\in M_+$, one has
\[B(v,w,t)\to 1, \qquad  D(v,w,t)\to  1,\]
as $t\to 0$ in any closed subsector of $\cV(0)$.\smallskip

\item[(ii)] For fixed $(v,w)\in M_+$ there  exists $k>0$  such that for any closed subsector of $ \cV(0)$
\[ |t| ^{-k} <  |B(v,w,t)|,  |D(v,w,t)| < |t|^k, \qquad  |t| \gg 0.\]

\item[(iii)] For $(v,w)\in M_+$ and $t\in \bC^*$ lying in the  intersection $\cH(0)=\cV(0)\cap\cV(1)$ 
there are relations
\[\frac{B(v+w,w,t)}{B(v,w,t)}=(1-x)^{-1}, \qquad \frac{D(v+w,w,t)}{D(v,w,t)}= B(v+w,w,t)^{-1}.\]

\item[(iv)] Note that $\cV(0)\cap -\cV(0)=i \cdot \Sigma(0)\sqcup -i \cdot \Sigma(0)$. In the region $-i\cdot \Sigma(0)$  there are relations
\[B(v,w,t)\cdot B(v,w,-t)=\prod_{n\geq 0}\big(1-x q^{n}\big)\cdot \prod_{n\geq 1}\big(1-x^{-1} q^{n}\big)^{-1},\]\begin{equation*} D(v,w,t)\cdot D(v,w,-t)=\prod_{n\geq 0}\big(1-xq^{n}\big)^{n}\cdot {\prod_{n\geq 1}\big(1-x^{-1}  q^{n}\big)^{n}}\cdot {\prod_{k\geq 1} \big(1-q^{k}\big)^{-2k}},\end{equation*}
\end{itemize}
where we used the notation \eqref{notat} as before.
\end{problem}

It is easy to see that a solution to Problem \ref{tre} gives rise to a solution to Problem \ref{tra} for all $(v,w)\in M_+$ via \eqref{blob}.
In particular, this implies  that Problem \ref{tre} has at most one solution.

\subsection{Symmetry and the degenerate case}
\label{deg}

So far we have considered the Riemann-Hilbert problems associated to points $(v,w)\in M_+$. We should now consider the problems associated to points in $M_-$ and $M_0$. 
 For this, note that there is an involution of $\Gamma$
\[(\beta,\delta)\mapsto (-\beta,\delta), \qquad (\beta^\vee,\delta^\vee)\mapsto (-\beta^\vee,\delta^\vee),\]
preserving the BPS invariants, which therefore identifies the BPS structure at a point $(v,w)\in M_+$ with the BPS structure at the corresponding point $(-v,w)\in M_-$. We need a new convention to label the BPS rays  for the BPS structures corresponding to points of $M_-$. We shall choose to label \[\ell_n=\bR_{>0} \cdot 2\pi i(-v+nw),\] with $\Sigma(n)$ lying between $\ell_{n}$ and $\ell_{n+1}$ as before. The result is illustrated in Figure \ref{fig3}. With these conventions the above involution identifies the ray diagrams for the BPS structures at $(v,w)$ and $(-v,w)$, so we  obtain relations
\[B_n(-v,w,t)=B_n(v,w,t)^{-1}, \qquad D_n(-v,w,t)=D_n(v,w,t).\]
Thus the solutions for points of $M_-$ are trivially related to those for $M_+$.

\begin{figure}
\begin{tikzpicture}
[scale=0.8]
\draw (5,0) circle [radius=0.1];
\draw[->] (4.9,0.1) -- (-1,4);
\draw[->] (4.9,0.1) -- (1,4);
\draw[->] (4.9,0.1) -- (3,4);
\draw[->] (5,0.1) -- (5,4);
\draw[->] (5.1,0.1) -- (7,4);
\draw[->] (5.1,0.1) -- (9,4);
\draw[->] (5.1,0.1) -- (11,4);

\draw[->] (4.9,0.1) -- (-1,4);

\draw[fill] (-1,4) circle [radius=0.05];
\draw[fill] (1,4) circle [radius=0.05];
\draw[fill] (3,4) circle [radius=0.05];
\draw[fill] (5,4) circle [radius=0.05];
\draw[fill] (7,4) circle [radius=0.05];
\draw[fill] (9,4) circle [radius=0.05];
\draw[fill] (11,4) circle [radius=0.05];


\draw[->] (4.9,-0.1) -- (-1,-4);
\draw[->] (4.9,-0.1) -- (1,-4);
\draw[->] (4.9,-0.1) -- (3,-4);
\draw[->] (5,-0.1) -- (5,-4);
\draw[->] (5.1,-0.1) -- (7,-4);
\draw[->] (5.1,-0.1) -- (9,-4);
\draw[->] (5.1,-0.1) -- (11,-4);

\draw[fill] (-1.05,-4) circle [radius=0.05];
\draw[fill] (1.05,-4) circle [radius=0.05];
\draw[fill] (3,-4) circle [radius=0.05];
\draw[fill] (5,-4) circle [radius=0.05];
\draw[fill] (7,-4) circle [radius=0.05];
\draw[fill] (9,-4) circle [radius=0.05];
\draw[fill] (11,-4) circle [radius=0.05];


\draw[->] (4.9,0) -- (3,0); \draw[->] (4.9,0) -- (1,0); \draw[->] (4.9,0) -- (-1,0);
\draw[->] (5.1,0) -- (7,0); \draw[->] (5.1,0) -- (9,0); \draw[->] (5.1,0) -- (11,0);

\draw[fill] (3,0) circle [radius=0.05];
\draw[fill] (1,0) circle [radius=0.05];
\draw[fill] (-1,0) circle [radius=0.05];
\draw[fill] (7,0) circle [radius=0.05];
\draw[fill] (9,0) circle [radius=0.05];
\draw[fill] (11,0) circle [radius=0.05];


\draw (11,2) node { $\cdots$};
\draw (-1,2) node { $\cdots$};
\draw (11,-2) node { $\cdots$};
\draw (-1,-2) node { $\cdots$};

\draw (5,5) node {$\scriptstyle \ell_0$};
\draw (3,5) node {$\scriptstyle \ell_{-1}$};
\draw (7,5) node {$\scriptstyle \ell_{1}$};
\draw (-2.5,0) node {$\scriptstyle -\ell_{\infty}$};
\draw (12.5,0) node {$\scriptstyle \ell_{\infty}$};

\draw (5,-5) node {$\scriptstyle -\ell_0$};
\draw (3,-5) node {$\scriptstyle -\ell_{-1}$};
\draw (7,-5) node {$\scriptstyle -\ell_{1}$};

\draw (4.4,2.6) node {$\scriptstyle \Sigma(0)$};
\draw (5.6,2.6) node {$\scriptstyle \Sigma(1)$};
\draw (7,0.3) node {$\scriptstyle 2\pi i w$};
\draw (5.5,-4) node {$\scriptstyle 2\pi i v$};


\end{tikzpicture}
\caption{The ray diagram associated to a point $(v,w)\in M_-.$
\label{fig3}}
\end{figure}
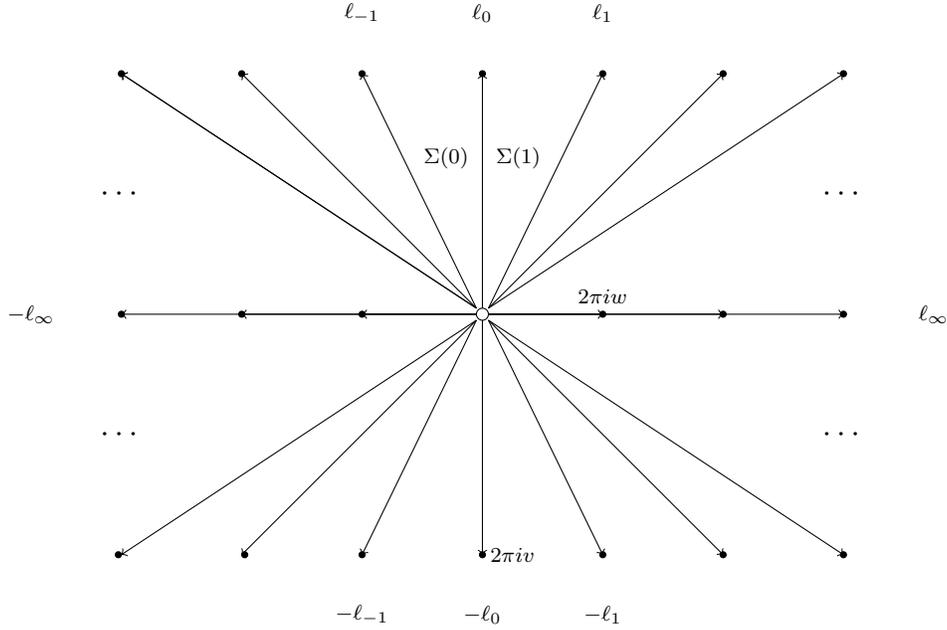

We also consider the degenerate case when $(v,w)\in M_0$. By applying the $\bZ$-action we can reduce to the case when $v/w\in (0,1)$. We set $\ell=\bR_{>0}\cdot 2\pi i w$  and define
\[\cH=\{z\in \bC^*:z=ab\text{ with }a\in \ell\text{ and }\Re(b)>0\},\]
 The only active rays are $\pm \ell$, and the wall-crossing formula implies that  the (partially-defined) BPS automorphism $\bS(\ell)$ coincides with the map $\bS(\Sigma)$ considered above.  The Riemann-Hilbert problem is then

\begin{problem}
\label{tro}
Fix $(v,w)\in M_0$ with $v/w\in (0,1)$. Find holomorphic functions  $B(t)$ and $D(t)$  on the region
\[\cV=\bC^*\setminus (\bR_{>0}\cdot w)\] satisfying the following properties:
\begin{itemize}
\item[(i)] As $t\to 0$ in any closed subsector of $\cV$ one has
\[ B(t)\to  1, \qquad   D(t)\to  1.\]
\item[(ii)] There  exists $k>0$ such that for any closed subsector of $ \cV$
\[ |t| ^{-k} <  |B(t)|,  |D(t)| < |t|^k, \qquad  |t| \gg 0.\]
\item[(iii)]  Note that $\cV\cap-\cV=\cH\sqcup-\cH$. In the region $\cH$  there are relations
\[B(t)\cdot B(-t)=\prod_{n\geq 0}\big(1-x q^{n}\big)\cdot \prod_{n\geq 1}\big(1-x^{-1} q^{n}\big)^{-1},\]\[ D(t)\cdot D(-t)=\prod_{n\geq 0}\big(1-xq^{n}\big)^{n}\cdot \prod_{n\geq 1}\big(1-x^{-1}  q^{n}\big)^{n}\cdot \prod_{k\geq 1} \big(1-q^{k}\big)^{-2k},\]
\end{itemize}
where we used the notation \eqref{notat} as before.
\end{problem}



\section{Double and triple sine functions}
\label{special}

In this section we introduce some special functions which we will later use  to solve the conifold Riemann-Hilbert problem described in the last section. The relevant special functions are, up to some exponential factors, the double sine function, and the triple sign function with two equal parameters. 

Multiple sine functions are usually defined using the multiple  gamma functions of Barnes \cite{Barnes4}. Both are functions of a variable $z\in \bC$ and $r$ parameters $\omega_1,\cdots,\omega_r\in \bC^*$. One has
\[\sin_r(z\b\omega_1,\cdots,\omega_r)=\Gamma_r(z\b\omega_1,\cdots,\omega_r)\cdot \Gamma_r\Big(\sum_{i=1}^r {\omega_i }-z\b\omega_1,\cdots,\omega_r\Big)^{(-1)^r}.\]
For definitions and results on multiple gamma and sine functions we recommend \cite{JM,KK,N,Ruj2}.

 \subsection{Double sine function}
 We begin by considering a function of $z\in \bC$ and two parameters $\omega_1,\omega_2\in \bC^*$. We shall use the notation
\begin{equation}
\label{bore}
 \begin{aligned}
  x_1=\exp(2\pi i z/\omega_1), &\quad x_2=\exp(2\pi i z/\omega_2), \\ q_1=\exp(2\pi i{\omega_2}/{\omega_1}), &\quad q_2=\exp(2\pi i{\omega_1}/{\omega_2}).\end{aligned}
 \end{equation}
Our function is obtained by multiplying the double sine function by an exponential prefactor. 
 The definition is
 \begin{equation}
 \label{when}F(z\b\omega_1,\omega_2)=e^{-\frac{\pi i}{2} \cdot  B_{2,2}(z\b\omega_1,\omega_2)}\cdot 
\sin_2(z\b \omega_1,\omega_2),\end{equation}
where $B_{2,2}(z\b\omega_1,\omega_2)$ is the multiple Bernoulli polynomial
\[B_{2,2}(z\b\omega_1,\omega_2)=\frac{z^2}{\omega_1\omega_2} -\Big(\frac{1}{\omega_1} + \frac{1}{\omega_2}\Big) z+\frac{1 }{6}\Big(\frac{\omega_2}{\omega_1} + \frac{\omega_1}{\omega_2}\Big)+\frac{1}{2}.\]
Up to trivial changes of variables the function $F(z\b\omega_1,\omega_2)$ coincides with the Fadeev dilogarithm appearing in the work of Fock and Goncharov on cluster theory \cite{FG}.  

 Although $F(z\b\omega_1,\omega_2)$  is a single-valued function of $z\in \bC$ for fixed values of $\omega_1,\omega_2\in \bC^*$, to make it a single-valued function of all three parameters we must introduce a cut-line. We will therefore only consider the function under the additional assumption that $\omega_1/\omega_2\notin \bR_{<0}$. 
  
 \begin{prop}
 \label{f}
The function $F(z\b\omega_1,\omega_2)$ is a single-valued meromorphic function of variables $z\in \bC$ and $\omega_1,\omega_2\in \bC^*$ under the assumption $\omega_1/\omega_2\notin \bR_{<0}$. It has the following properties:
\begin{itemize}
\item[(i)] The function is regular and non-vanishing except  at the points
 \[z=a\omega_1+b\omega_2, \quad a,b\in\bZ,\]
 which are   zeroes if $a,b\leq 0$, poles if $a,b>0$, and otherwise neither.\smallskip
 
\item[(ii)] It is symmetric in the arguments $\omega_1,\omega_2$:
 \[F(z\b\omega_1,\omega_2)=F(z\b\omega_2,\omega_1),\]
 and is invariant under simultaneous rescaling of all three arguments. \smallskip
 
\item[(iii)] It satisfies the two difference relations:
\begin{equation}
\label{diffy}\frac{F(z+\omega_1\b\omega_1,\omega_2)}{F(z\b\omega_1,\omega_2)}=\frac{1}{1-x_2}, \qquad \frac{F(z+\omega_2\b\omega_1,\omega_2)}{F(z\b\omega_1,\omega_2)}=
\frac{1}{1-x_1}.\end{equation}

\item[(iv)] There is a product expansion
 \[ F(z\b\omega_1,\omega_2)=\prod_{k\geq 0} (1-x_1 q_1^{-k})^{-1}\cdot \prod_{k\geq 1} (1-x_2 q_2^{k}),\]
 valid when  $\Im(\omega_1/\omega_2)>0$.\smallskip

 \item[(v)] When $\Re(\omega_i)>0$ and $0<\Re(z)<\Re(\omega_1+\omega_2)$ there is an integral representation \begin{equation}
 \label{homeagain}F(z\b\omega_1,\omega_2)=\exp\Bigg(\int_C \frac{e^{zs}}{(e^{\omega_1 s}-1)(e^{\omega_2 s}-1)}\frac{ds}{s}\Bigg),\end{equation}
 where the contour $C$ follows the real axis from $-\infty$ to $+\infty$ avoiding the origin by a small detour in the upper half-plane.
 \end{itemize}
 \end{prop}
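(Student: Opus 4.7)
My plan is to deduce each of the five properties from the classical theory of the Barnes double sine function $\sin_2(z\b\omega_1,\omega_2)$, with the only real work being to keep track of the exponential prefactor $e^{-\frac{\pi i}{2}B_{2,2}(z\b\omega_1,\omega_2)}$. The inputs I will take from \cite{JM,KK,N,Ruj2} are the standard statements that $\sin_2$ is a single-valued meromorphic function of $(z,\omega_1,\omega_2)$ once a cut $\omega_2/\omega_1\in\bR_{<0}$ is removed, that it is symmetric in $\omega_1,\omega_2$ and homogeneous of degree $0$ under simultaneous rescaling, and that it satisfies the difference relation $\sin_2(z+\omega_1\b\omega_1,\omega_2)/\sin_2(z\b\omega_1,\omega_2)=1/(2\sin(\pi z/\omega_2))$ together with an appropriate product expansion and Barnes-type integral representation.

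Parts (i) and (ii) are essentially immediate. The prefactor is entire and non-vanishing in $z$, so the zero/pole structure of $F$ coincides with that of $\sin_2$; the classical description of these loci at $z=a\omega_1+b\omega_2$ with the stated regions of zeros, poles, and cancellations gives (i). For (ii), one checks directly from the formula that $B_{2,2}(z\b\omega_1,\omega_2)$ is symmetric under $\omega_1\leftrightarrow\omega_2$ and invariant under $(z,\omega_1,\omega_2)\mapsto(\lambda z,\lambda\omega_1,\lambda\omega_2)$, so these properties reduce to the corresponding well-known symmetries of $\sin_2$.

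For (iii), a direct computation gives
\[
B_{2,2}(z+\omega_1\b\omega_1,\omega_2)-B_{2,2}(z\b\omega_1,\omega_2)=\frac{2z}{\omega_2}-1,
\]
so the prefactor multiplies by $i\cdot e^{-\pi i z/\omega_2}$. Combining with the standard difference formula for $\sin_2$ and rewriting $2\sin(\pi z/\omega_2)=-i(e^{i\pi z/\omega_2}-e^{-i\pi z/\omega_2})$ yields exactly $1/(1-x_2)$, and the second relation follows by the $\omega_1\leftrightarrow\omega_2$ symmetry established in (ii).

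Finally, (iv) and (v) reduce to invoking the classical product expansion and Barnes integral representation for $\sin_2$, absorbing the contribution of $B_{2,2}$ into the form stated. The product expansion requires $|q_2|<1$, i.e.\ $\Im(\omega_1/\omega_2)>0$, which controls the tail of the infinite product. The integral representation requires absolute convergence of the integrand on $C$, which is guaranteed by $\Re(\omega_i)>0$ together with the strip condition $0<\Re(z)<\Re(\omega_1+\omega_2)$; in particular the contour detour at $s=0$ absorbs the removable singularity since $z/(\omega_1\omega_2)$ is cancelled by the prefactor (this is the precise purpose of renormalising $\sin_2$ by the Bernoulli factor). The main obstacle will be purely bookkeeping: different references normalise $\sin_2$ and the Barnes double gamma in slightly different ways, so I expect the most error-prone step to be fixing one set of conventions and verifying that the prefactor $e^{-\frac{\pi i}{2}B_{2,2}}$ is exactly the discrepancy that converts the classical $\sin_2$ statements into the clean forms \eqref{diffy} and \eqref{homeagain}.
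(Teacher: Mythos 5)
Your proposal follows essentially the same route as the paper: both reduce every part to cited classical properties of $\sin_2$ (Ruijsenaars, Narukawa, Jimbo--Miwa) and track the effect of the Bernoulli prefactor, with the same key computation $B_{2,2}(z+\omega_1\b\omega_1,\omega_2)-B_{2,2}(z\b\omega_1,\omega_2)=2z/\omega_2-1$ driving part (iii). One small correction that does not affect validity: the integrand in (v) has a third-order pole at $s=0$ rather than a removable singularity, and the choice of detour (above the origin) is precisely what encodes the $B_{2,2}$ prefactor.
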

 
\begin{proof}
Note that up to a trivial change of variables the double sine function coincides with the hyperbolic gamma function of Ruijsenaars \cite{Ruj1,Ruj3} (see particularly equation (3.52) of \cite{Ruj1}). The global properties of this function are covered by \cite[Prop. III.5]{Ruj3}, and since the exponential prefactor in  \eqref{when} does not affect these, this implies part (i).

The integral formula, part (v), is proved in \cite[Prop. 2]{N}. Property (ii) is then obvious by analytic continuation, but in any case this is a standard property of the double sine function:  see \cite[Appendix A]{JM}.

For part (iii) we only have to check one relation, by symmetry. The double sine function satisfies the difference relation
\[\frac{\sin_2(z+\omega_1\b\omega_1,\omega_2)}{\sin_2(z\b\omega_1,\omega_2)} = \frac{1}{2\sin (\pi z/\omega_2)}.\]
This can be found in \cite[Prop. III.1]{Ruj1} or \cite[Equation (A.8)]{JM}.
Combining this with the  identity
\[B_{2,2}(z+\omega_1\b\omega_1,\omega_2)-B_{2,2}(z\b\omega_1,\omega_2)=2B_{1,1}(z\b\omega_2)=\frac{2z}{\omega_2} - 1\]
gives  (iii). Alternatively one can give a direct proof using the integral identity (v).

The product expansion, part (iv),  is due to Shintani. It can be found in \cite[Corollary 6]{N} or \cite[Equation (3.58)]{Ruj1}.
 \end{proof}

\subsection{Triple sine with repeated argument}

We now consider another function of $z\in \bC$ and $\omega_1,\omega_2\in \bC^*$ related to the triple sine function. We define
\begin{equation}
\label{defg}G(z\b\omega_1,\omega_2)=e^{\frac{\pi i}{6} \cdot  B_{3,3}(z+\omega_1\b\omega_1,\omega_1,\omega_2)}
\cdot \sin_3\big(z+\omega_1\b \omega_1,\omega_1,\omega_2\big),\end{equation}
where $B_{3,3}(z\b\omega_1,\omega_2,\omega_3)$ is the multiple Bernoulli polynomial
\[B_{3,3}(z\b\omega_1,\omega_2,\omega_3)=\frac{z^3}{\omega_1\omega_2\omega_3} - \frac{3(\omega_1+\omega_2+\omega_3)}{2\omega_1\omega_2\omega_3} z^2 \]\[+\frac{\omega_1^2+\omega_2^2 +\omega_3^2+3\omega_1\omega_2+3\omega_2\omega_3 + 3\omega_3 \omega_1}{2\omega_1\omega_2\omega_3} z-\frac{(\omega_1+\omega_2+\omega_3)(\omega_1\omega_2+\omega_2\omega_3+\omega_3\omega_1)}{4\omega_1\omega_2\omega_3}.\]
Note that the function $G(z\b\omega_1,\omega_2)$  is not symmetric in $\omega_1,\omega_2$: we will use both the functions  $G(z\b\omega_1,\omega_2)$ and $G(z\b\omega_2,\omega_1)$ in what follows. 
  
 \begin{prop}
 \label{g}
 The function $G(z\b\omega_1,\omega_2)$ is a single-valued meromorphic function of variables   $z\in \bC$ and $\omega_1,\omega_2\in \bC^*$ under the assumption $\omega_1/\omega_2\notin \bR_{<0}$. It has the following properties:
 \begin{itemize}
 \item[(i)] The function is everywhere regular and vanishes only  at the points
 \[z=a\omega_1+b\omega_2, \quad a,b\in\bZ,\]
 with  $a<0$ and $b\leq 0$, or  $a>0$ and $b> 0$.\smallskip

\item[(ii)] It satisfies the symmetry relation
\begin{equation}
\label{nosven}\frac{\partial}{\partial \omega_2} \log G(z\b\omega_1,\omega_2)=\frac{\partial}{\partial \omega_1}\log G(z\b\omega_2,\omega_1),\end{equation}
and is invariant under simultaneous rescaling of all three arguments.\smallskip

\item[(iii)] It satisfies the difference relation
 \begin{equation}
 \label{diffyg}\frac{G(z+\omega_1\b\omega_1,\omega_2)}{G(z\b\omega_1,\omega_2)}=F(z+\omega_1\b\omega_1,\omega_2)^{-1}.\end{equation}

\item[(iv)] There is a relation
 \begin{equation}
\label{latehome}\frac{\partial}{\partial \omega_2}\log F(z\b\omega_1,\omega_2)=\frac{\partial}{\partial z} \log G(z\b\omega_2,\omega_1).\end{equation}

\item[(v)] When $\Re(\omega_i)>0$ and $-\Re(\omega_1)< \Re(z)<\Re(\omega_1+\omega_2)$ there is an integral representation \begin{equation}
\label{homealone}G(z\b\omega_1,\omega_2)=\exp\Bigg(\int_C \frac{-e^{(z+\omega_1)s}}{(e^{\omega_1 s}-1)^2(e^{\omega_2 s}-1)}\frac{ds}{s}\Bigg),\end{equation}
where as before, the contour $C$ follows the real axis from $-\infty$ to $+\infty$ avoiding the origin by a small detour in the upper half-plane.
\end{itemize}
\end{prop}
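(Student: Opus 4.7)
My plan is to follow the same pattern as the proof of Proposition \ref{f}: establish the integral representation (v) first, and then read off (ii), (iii) and (iv) by differentiating or subtracting integrands inside the strip of convergence and extending by analyticity. The Bernoulli prefactor $e^{\frac{\pi i}{6}B_{3,3}(z+\omega_1\b\omega_1,\omega_1,\omega_2)}$ is engineered precisely to absorb the residue contribution at $s=0$ coming from the contour detour, which is what makes the kernel in (v) so clean and which converts the raw difference equation for $\sin_3$ into the symmetric identities (iii) and (iv).

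\textbf{The integral representation (v).} First I would invoke Narukawa's integral formula for $\log\sin_3$ (see \cite[Prop.~2]{N}), specialised to the argument $(\omega_1,\omega_1,\omega_2)$ and evaluated at $u=z+\omega_1$. The contour $C$, with its detour above the origin, picks up a polynomial term in $z$ of degree three whose coefficients, grouped by standard residue calculus, assemble into $\tfrac{\pi i}{6}B_{3,3}(z+\omega_1\b\omega_1,\omega_1,\omega_2)$; the exponential prefactor in the definition of $G$ exactly cancels this contribution and leaves the stated integral. Convergence at $s\to+\infty$ requires $\Re(z+\omega_1)<\Re(2\omega_1+\omega_2)$ and at $s\to-\infty$ requires $\Re(z+\omega_1)>0$, which reproduces the stated strip $-\Re(\omega_1)<\Re(z)<\Re(\omega_1+\omega_2)$.

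\textbf{Deriving (ii), (iii), (iv) from (v).} All three reduce to elementary manipulation of the integrand inside the strip, followed by analytic continuation. For (iii), subtracting the integrands of $\log G(z+\omega_1\b\omega_1,\omega_2)$ and $\log G(z\b\omega_1,\omega_2)$ gives a numerator $-e^{(z+\omega_1)s}(e^{\omega_1 s}-1)$; one factor $(e^{\omega_1 s}-1)$ cancels, and what remains is precisely $-\log F(z+\omega_1\b\omega_1,\omega_2)$ by Proposition \ref{f}(v). For (iv), differentiating the integrand of $\log F(z\b\omega_1,\omega_2)$ in $\omega_2$ yields $-e^{(z+\omega_2)s}/\bigl((e^{\omega_1 s}-1)(e^{\omega_2 s}-1)^2\bigr)$, which agrees with $\partial_z\log G(z\b\omega_2,\omega_1)$ read directly from (v) after swapping $\omega_1\leftrightarrow\omega_2$. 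For the symmetry relation \eqref{nosven} in (ii), differentiating $\log G(z\b\omega_1,\omega_2)$ in $\omega_2$ and $\log G(z\b\omega_2,\omega_1)$ in $\omega_1$ both produce the common symmetric integrand $e^{(z+\omega_1+\omega_2)s}/\bigl((e^{\omega_1 s}-1)^2(e^{\omega_2 s}-1)^2\bigr)$, while scale invariance is immediate from the substitution $s\mapsto s/\lambda$ together with the homogeneity of $B_{3,3}$.

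\textbf{The zero structure (i), the main obstacle.} Meromorphic continuation of $G$ to $z\in\bC$ (under the cut $\omega_2/\omega_1\notin\bR_{<0}$) follows from the known global structure of $\sin_3$ (see \cite[Prop.~III.5]{Ruj3} and \cite{KK}), combined with the fact that the Bernoulli prefactor is entire in $z$. The delicate part is pinning down the exact divisor. Using the factorisation $\sin_3(u\b\omega_1,\omega_1,\omega_2)=\Gamma_3(u\b\omega_1,\omega_1,\omega_2)\cdot \Gamma_3(2\omega_1+\omega_2-u\b\omega_1,\omega_1,\omega_2)^{-1}$ and the standard list of poles of $\Gamma_3$ at $\bZ_{\leq 0}$-combinations of the weights, I would compute the combined divisor of $\sin_3(z+\omega_1\b\omega_1,\omega_1,\omega_2)$, keeping careful track of the multiplicities enhanced by the repeated weight $\omega_1$. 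The expected outcome is that the translated loci from the two $\Gamma_3$-factors assemble into the two wedges $\{a<0,\,b\leq 0\}$ and $\{a>0,\,b\geq 0\}$ listed in (i). This multiplicity bookkeeping, as opposed to the rather formal manipulations that give (ii)--(v), is where all the arithmetic content of (i) resides, and is the main place where the proof could go wrong.
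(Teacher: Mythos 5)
Your proposal matches the paper's proof: part (v) is quoted from Narukawa's integral formula, parts (ii)--(iv) are obtained exactly as you describe by subtracting or differentiating the integrands of \eqref{homeagain} and \eqref{homealone} under the integral sign, and the global statement (i) is referred to standard facts about multiple gamma/sine functions. For the divisor in (i), which you rightly single out as the only delicate point, note that the paper also indicates a route that bypasses the $\Gamma_3$ multiplicity bookkeeping entirely: once (iii) and (v) are established, the difference relation $G(z+\omega_1\b\omega_1,\omega_2)=G(z\b\omega_1,\omega_2)\cdot F(z+\omega_1\b\omega_1,\omega_2)^{-1}$ together with the regularity and non-vanishing of $G$ on the strip of the integral representation propagates the known divisor of $F$ from Proposition \ref{f}(i) to pin down that of $G$.
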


\begin{proof}
The global properties follow from standard properties of mutiple sine functions \cite{JM}, or can be deduced from the corresponding properties of the function $F(z\b\omega_1,\omega_2)$ using the relations \eqref{latehome}. The integral representation, part (v), is proved in \cite[Prop. 2]{N}, and  the relations  (ii) and (iv)  are then immediate  by differentiating under the integral sign and comparing with Prop. \ref{f}(v). Part (iii) follows directly from the integral formula.
 \end{proof}

\subsection{Reflection relations}

The following reflection properties will be needed later.

\begin{prop}
\label{ref}
When  $\Im(\omega_1/\omega_2)>0$ and $z\in \bC$ the following relations hold
\begin{equation}
\label{reff}F(z+\omega_2\b\omega_1,\omega_2)\cdot F(z\b\omega_1,-\omega_2)=  \prod_{k\geq 0} \big(1-x_2 q_2^{k}\big)\cdot \prod_{k\geq 1} \big(1-x_2^{-1} q_2^{k}\big)^{-1},\end{equation}
\begin{equation}
\label{refg}G(z+\omega_2\b\omega_1,\omega_2)\cdot G(z\b\omega_1,-\omega_2)= \prod_{k\geq 1} \big(1-x_2 q_2^{k}\big)^{k}\cdot \prod_{k\geq 1} \big(1-x_2^{-1} q_2^{k}\big)^{k},\end{equation}
where $x_2$ and $q_2$ are defined in \eqref{bore}.
\end{prop}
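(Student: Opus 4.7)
The plan is to reduce both identities to a cancellation argument using the product expansion of Proposition \ref{f}(iv) (and an analogue for $G$). Both right-hand sides are products in $x_2$ and $q_2$ alone, while each factor on the left depends on both pairs $(x_1,q_1)$ and $(x_2,q_2)$; the key point is that the $(x_1,q_1)$-dependence will cancel between the two factors on the left.

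For \eqref{reff}, I first use the symmetry of Proposition \ref{f}(ii) to write $F(z\b\omega_1,-\omega_2)=F(z\b-\omega_2,\omega_1)$. Since $\Im(\omega_1/\omega_2)>0$ is equivalent to $\Im(-\omega_2/\omega_1)>0$, the product expansion of Proposition \ref{f}(iv) is applicable to the right-hand side with parameters $(-\omega_2,\omega_1)$. Carrying out the substitutions (under which the formal symbols $x_1, q_1, x_2, q_2$ in that expansion become $x_2^{-1}, q_2^{-1}, x_1, q_1^{-1}$ in the original variables) produces an explicit product formula for $F(z\b\omega_1,-\omega_2)$. Applying Proposition \ref{f}(iv) directly to $F(z+\omega_2\b\omega_1,\omega_2)$ and using that the shift $z\mapsto z+\omega_2$ sends $x_1\mapsto x_1q_1$ while fixing $x_2$ gives a reindexed product in the $x_1,q_1$-factors. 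Multiplying the two expansions makes the $(x_1,q_1)$-factors telescope to $1$, and the surviving $(x_2,q_2)$-factors are exactly the right-hand side of \eqref{reff}.

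For \eqref{refg}, the same scheme should apply once one has an analogue of Proposition \ref{f}(iv) for $G$. Because the underlying triple sine $\sin_3(z+\omega_1\b\omega_1,\omega_1,\omega_2)$ has a repeated $\omega_1$-argument, its zeros stacked in the $\omega_2$-direction acquire multiplicity linear in the index, so the expected expansion has the shape
\[G(z\b\omega_1,\omega_2)\;=\;\prod_{k\geq 1}(1-x_1q_1^{-k})^{-k}\cdot \prod_{k\geq 0}(1-x_2q_2^k)^{\alpha(k)},\]
with $\alpha(k)$ a polynomial of degree one in $k$. This expansion can be derived either from the integral representation \eqref{homealone} by closing the contour in a suitable half-plane and collecting residues at the simple poles of $(e^{\omega_2 s}-1)^{-1}$, or by integrating the $F$-expansion in $z$ using the relation of Proposition \ref{g}(iv). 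With it in hand, the symmetry-and-cancellation argument used for \eqref{reff} carries over, the linear-in-$k$ exponent producing the $(1-x_2q_2^k)^k$ factors on the right-hand side.

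The main obstacle is pinning down the $G$-expansion, in particular the exact form of $\alpha(k)$. If this becomes awkward, a cleaner fallback is available: combining Proposition \ref{g}(iv) with the symmetry of $F$ in its two arguments yields $\partial_z\log G(z\b\omega_1,\omega_2)=\partial_{\omega_1}\log F(z\b\omega_1,\omega_2)$, so $\partial_z$ of the logarithm of the left-hand side of \eqref{refg} equals $\partial_{\omega_1}\log[F(z+\omega_2\b\omega_1,\omega_2)\cdot F(z\b\omega_1,-\omega_2)]$; by the already-proved \eqref{reff} this is an explicit expression in $x_2, q_2$ which matches $\partial_z$ of the logarithm of the right-hand side of \eqref{refg}. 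Hence the two sides differ by a function of $\omega_1, \omega_2$ alone, and an asymptotic comparison (say $\Im(z/\omega_2)\to+\infty$, where both $x_2$-products tend to $1$) fixes this constant to $1$.
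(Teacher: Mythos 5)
Your route is genuinely different from the paper's: there both identities are proved at once by a contour-rotation and residue argument applied to the integral representations \eqref{homeagain} and \eqref{homealone} (the two left-hand factors become integrals over contours on either side of the ray $\bR_{>0}\cdot 2\pi i/\omega_2$ whose integrands differ by a sign, so the product is the exponential of the sum of residues at $s=2\pi i m/\omega_2$). Your derivation of \eqref{reff} from the Shintani expansion plus the $\omega_1\leftrightarrow\omega_2$ symmetry is sound in outline, and your substitution bookkeeping ($x_1,q_1,x_2,q_2\mapsto x_2^{-1},q_2^{-1},x_1,q_1^{-1}$) is correct. One caution: with the expansion exactly as printed in Proposition \ref{f}(iv) the telescoping does not close --- you are left with $(1-x_1)^{-1}(1-x_1q_1)^{-1}$ on the $(x_1,q_1)$ side, and the surviving $(x_2,q_2)$ products come out shifted by one index relative to \eqref{reff}. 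The indexing consistent with the difference relations \eqref{diffy} is $\prod_{k\geq1}(1-x_1q_1^{-k})^{-1}\prod_{k\geq0}(1-x_2q_2^{k})$, and with that normalisation your cancellation does give \eqref{reff} exactly; so you must first fix the expansion's indices against \eqref{diffy} rather than quoting (iv) verbatim.

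The genuine gap is \eqref{refg}. Your primary route leaves the exponent $\alpha(k)$ undetermined, so there is no proof there yet; deriving it from \eqref{homealone} by residues is exactly the computation the paper does, so deferring it defers the whole point. The fallback is a good idea and the derivative matching does check out: using $\partial_z\log G(z\b\omega_1,\omega_2)=\partial_{\omega_1}\log F(z\b\omega_1,\omega_2)$ together with \eqref{reff}, the $z$-derivatives of the logarithms of the two sides of \eqref{refg} both equal $\frac{2\pi i}{\omega_2}\sum_{k\geq1}k\bigl(\tfrac{x_2^{-1}q_2^{k}}{1-x_2^{-1}q_2^{k}}-\tfrac{x_2q_2^{k}}{1-x_2q_2^{k}}\bigr)$. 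But the normalisation step fails as stated: in the limit $\Im(z/\omega_2)\to+\infty$ one has $x_2\to0$ but $x_2^{-1}\to\infty$, so the factors $(1-x_2^{-1}q_2^{k})^{k}$ on the right of \eqref{refg} (and likewise $(1-x_2^{-1}q_2^{k})^{-1}$ in \eqref{reff}) do not tend to $1$; there is no regime in which the $x_2$-products and the $x_2^{-1}$-products are simultaneously trivial. The $z$-independent multiplicative constant $c(\omega_1,\omega_2)$ is therefore not pinned down by your argument. To finish you must either compute that constant another way (evaluate at a special value of $z$, or combine quasi-periodicity under $z\mapsto z+\omega_1$ with an asymptotic in $\omega_2$), or establish the product expansion of $G$ honestly.
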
 

\begin{proof}
We start with \eqref{reff}. Note first that the contour $C$ defining the integral representation \eqref{homeagain} can be rotated to point along a different ray $\ell=\bR_{>0} \cdot r$, providing that it does not hit the rays spanned by $2\pi i/\omega_i$ where the poles of the integrand lie, and providing that \begin{equation}
\label{l}\Re(\omega_i s)>0, \qquad 0<\Re(zs)<\Re((\omega_1+\omega_2)s),\end{equation}
 for $s\in \ell$, which ensures that the integrand decays exponentially as $|s|\to \infty$ with $s\in \pm \ell$.   For small enough $|z|$ the conditions \eqref{l}  are equivalent to the assumption that the  half-plane centered on $\ell$ contains the points $\omega_i^{-1}$ and $z^{-1}$.
 
For definiteness, we can assume that $z$ and  $\omega_1,\omega_2$ lie close but not on the positive real axis: since the right-hand side of our relation  defines an analytic function for $\Im(\omega_1/\omega_2)>0$, the result will follow in general by analytic continuation. The term $F(z+\omega_2\b\omega_12,\omega_2)$ is thus covered by the integral representation \eqref{homeagain}. To give a similar description of the term $F(z\b\omega_1,-\omega_2)$ we can use the homogeneity property of $F(z\b\omega_1,\omega_2)$ to rotate $z$, $\omega_1$ and $-\omega_2$ into the right-hand half-plane. The result is an integral representation whose contour is a rotation of $C$ which points along the negative imaginary axis. 
\begin{figure}
\begin{tikzpicture}[scale=0.8]


\draw (0,.5) arc (90:-90:.5);
\draw (130:.5) arc (130:-50:.5);

\draw[->] (-90:0.5) -- (-90:2.5);
\draw (-90:2.5) -- (-90:4.5);

\draw[->] (90:4.5) -- (90:2.5);
\draw (90:2.5) -- (90:0.5);

\draw[->] (130:4.5) -- (130:2.5);
\draw (130:2.5) -- (130:0.5);

\draw[->] (130:-0.5) -- (130:-2.5);
\draw (130:-2.5) -- (130:-4.5);

\draw (-2.1,1.8) node {$\scriptstyle C_+$};
\draw (.5,3) node {$\scriptstyle C_-$};


\draw[dashed] (90:4.5) arc (90:130:4.5);
\draw[dashed] (90:-4.5) arc (90:130:-4.5);


\draw[fill] (0,0) circle [radius=0.05];

\draw[fill] (110:1) circle [radius=0.05];
\draw[fill] (110:2) circle [radius=0.05];
\draw[fill] (110:3) circle [radius=0.05];
\draw[fill] (110:4) circle [radius=0.05];

\draw[fill] (110:-1) circle [radius=0.05];
\draw[fill] (110:-2) circle [radius=0.05];
\draw[fill] (110:-3) circle [radius=0.05];
\draw[fill] (110:-4) circle [radius=0.05];

\draw[fill] (70:1) circle [radius=0.05];
\draw[fill] (70:2) circle [radius=0.05];
\draw[fill] (70:3) circle [radius=0.05];
\draw[fill] (70:4) circle [radius=0.05];

\draw[fill] (70:-1) circle [radius=0.05];
\draw[fill] (70:-2) circle [radius=0.05];
\draw[fill] (70:-3) circle [radius=0.05];
\draw[fill] (70:-4) circle [radius=0.05];

\draw (20:1) circle [radius=0.05];
\draw (20:1) node {$\ \ \ \ \ \ \scriptstyle \omega_2^{-1}$};
\draw (-20:1) circle [radius=0.05];
\draw (-20:1) node {$\ \ \ \ \ \ \scriptstyle \omega_1^{-1}$};

\end{tikzpicture}
\caption{The contours for the proof of Proposition \ref{ref}.
\label{fig2}}
\end{figure}
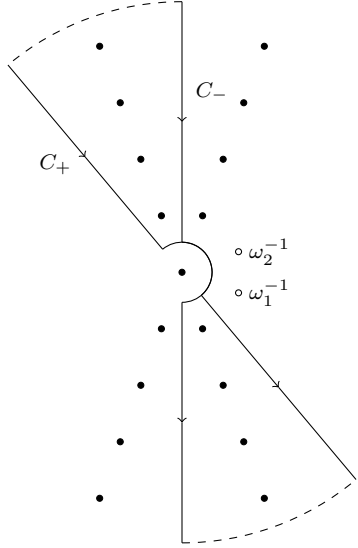

Consulting Figure \ref{fig2}   it is now easy to see that\[F(z+\omega_2\b\omega_1,\omega_2)\cdot F(z\b\omega_1,-\omega_2)=\exp\bigg( \int_{C_+} \frac{e^{(z+\omega_2)s}}{(e^{\omega_1 s}-1)(e^{\omega_2 s}-1)}\frac{ds}{s}+\int_{C_-} \frac{e^{zs}}{(e^{\omega_1 s}-1)(e^{-\omega_2 s}-1)}\frac{ds}{s}  \bigg),\]
where $C_-$ and $C_+$ are rotations of our standard contour $C$ whose positive directions lie along small clockwise, respectively anti-clockwise, perturbations of the ray $\bR_{<0} \cdot 2\pi i/\omega_2$. Since the integrands differ only by a sign, the expression in the exponential is just the sum of residues at the points $s=2\pi i m/\omega_2$ for $m\in \bZ\setminus \{0\}$, taken with a positive or negative sign depending on the sign of $m$. This residue is
\[2\pi i\cdot  \operatorname{Res}_{s=\frac{2\pi i m}{\omega_2}}\bigg( \frac{e^{(z+\omega_2)s}\,ds}{(e^{\omega_1 s}-1)(e^{\omega_2 s}-1)s}\bigg)= \frac{e^{2\pi i mz/\omega_2}}{m(e^{2\pi i m\omega_1/\omega_2}-1)}=\frac{x_2^{m} }{m(q^{m}_2-1)}.\]
Thus we obtain an expression
\[F(z+\omega_2\b\omega_1,\omega_2)\cdot F(z\b\omega_1,-\omega_2)=\exp\bigg(\sum_{m\geq 1}\frac{-x_2^m }{m(1-q^{m}_2)} +\sum_{m\geq 1}  \frac{x_2^{-m}q_2^{m}}{m(1-q_2^{m})} \bigg)\]\[=\exp\bigg(-\sum_{m\geq 1, k\geq 0}\frac{1}{m}{x_2^m q_2^{km}} +\sum_{m\geq 1,k\geq 1}  \frac{1}{m} {x_2^{-m} q_2^{km}} \bigg)=\prod_{k\geq 0} (1-x_2 q_2^{k})\cdot \prod_{k\geq 1} (1-x_2^{-1} q_2^{k})^{-1},\]
which completes the proof of \eqref{reff}.

To prove \eqref{refg} we follow the same strategy. Under the same conditions as before we get\[G(z+\omega_2\b\omega_1,\omega_2)\cdot G(z\b\omega_1,-\omega_2)\]\[=\exp\bigg( \int_{C_+} \frac{-e^{(z+\omega_1 +\omega_2)s}}{(e^{\omega_1 s}-1)^2(e^{\omega_2 s}-1)}\frac{ds}{s}+\int_{C_-} \frac{-e^{(z+\omega_1)s}}{(e^{\omega_1 s}-1)^2(e^{-\omega_2 s}-1)}\frac{ds}{s}  \bigg).\]
Once again the integrands differ only by a sign, so the expression in the exponential is just the sum of the residues at the points $s=2\pi i m/\omega_2$ for $m\in \bZ\setminus \{0\}$, taken with a positive or negative sign depending on the sign of $m$. This time the residue is
\[2\pi i\cdot  \operatorname{Res}_{s=\frac{2\pi i m}{\omega_2}}\bigg( \frac{-e^{(z+\omega_1+\omega_2)s}\,ds}{(e^{\omega_1 s}-1)^2(e^{\omega_2 s}-1)s}\bigg)= \frac{-e^{2\pi i m(z+\omega_1)/\omega_2}}{m(e^{2\pi i m\omega_1/\omega_2}-1)^2}=\frac{-x_2^{m} q_2^{m}}{m(1-q_2^{m})^2}.\]
Thus we obtain an expression
\[G(z+\omega_2\b\omega_1,\omega_2)\cdot G(z\b\omega_1,-\omega_2)=\exp\bigg(\sum_{m\geq 1}\frac{-x_2^m q_2^{m}}{m(1-q_2^{m})^2} +\sum_{m\geq 1}  \frac{-x_2^{-m}q_2^{m}}{m(1-q_2^{m})^2} \bigg)\]\[=\exp\bigg(\sum_{m\geq 1, k\geq 1}-\frac{k}{m}{x_2^m q_2^{km}} -\sum_{m\geq 1,k\geq 1}  \frac{k}{m} {x_2^{-m} q_2^{km}} \bigg)=\prod_{k\geq 1} (1-x_2 q_2^{k})^{k}\cdot  \prod_{k\geq 1} (1-x_2^{-1} q_2^{k})^{k},\]
which completes the proof.
\end{proof}

\subsection{Polylogarithm and zeta identities}

The asymptotic expansions of the functions $F$ and $G$ which we  derive in Sections \ref{sss} and \ref{sst} below involve the polylogarithm and
 Riemann zeta functions. In this section we collect some simple integral identities involving  these functions.

 For all $k\in \bZ$ the polylogarithm $\Li_k(x)$ is defined by the power series
\begin{equation}\label{po}\Li_k(x)=\sum_{n\geq 1} \frac{x^n}{n^k},\end{equation}
which is absolutely convergent in the unit disc.
For $k\leq 0$ the function $\Li_k(x)$ is rational, and regular except for a pole at $x=1$. For  $k\geq 1$ the function $\Li_k(x)$ has a single logarithmic singularity at $x=1$.  We list the special cases\[ \Li_{-1}(x)= \frac{x}{(1-x)^2}, \quad \Li_0(x)=\frac{x}{1-x}, \quad \Li_{1}(x)=-\log(1-x).\]
In what follows we shall only use expressions of the form $\Li_k(e^{2\pi i a})$, and will always assume that  $\Im(a)>0$, so the power series \eqref{po} will suffice to define the polylogarithm, and the multi-valuedness of the analytic continuation of $\Li_k(x)$ for $k\geq 1$ will play no role.
 
\begin{prop}
\label{sme}
Take complex numbers $z$ and $\omega_1$ satisfying $0<\Re(z)<\Re(\omega_1)$ and $\Im(z/\omega_1)>0$. Then for each integer $d\in \bZ$ there is an expression
\begin{equation}
\label{hallow} \int_{C} \frac{e^{zs}\cdot  s^{-d}}{e^{\omega_1 s}-1} \,ds=\Big(\frac{\omega_1}{2\pi i}\Big)^{d-1}\cdot \Li_{d}(e^{2\pi i z/\omega_1}) ,\end{equation}
where as before the contour $C$ follows the real axis from $-\infty$ to $+\infty$, with a small detour around the origin in the upper half-plane. \end{prop}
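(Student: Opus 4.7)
The plan is to prove Proposition \ref{sme} by closing the contour $C$ in the upper half-plane and applying the residue theorem.

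First, observe that the integrand $e^{zs}s^{-d}/(e^{\omega_1 s}-1)$ has simple poles at $s_n=2\pi i n/\omega_1$ for nonzero integers $n$, and (when $d\geq 0$) a pole of order $d+1$ at $s=0$. Since $0<\Re(z)<\Re(\omega_1)$ forces $\Re(\omega_1)>0$, the poles $s_n$ with $n\geq 1$ lie in the open upper half-plane, while those with $n\leq -1$ lie below; the detour around the origin keeps $s=0$ strictly below the contour. A direct computation using $\tfrac{d}{ds}(e^{\omega_1 s}-1)|_{s=s_n}=\omega_1$ gives
\[
\operatorname{Res}_{s=s_n}\left(\frac{e^{zs}s^{-d}}{e^{\omega_1 s}-1}\right)=\frac{1}{\omega_1}\left(\frac{\omega_1}{2\pi in}\right)^{d}e^{2\pi inz/\omega_1}.
\]

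Next, I would close $C$ by a large semicircle in the upper half-plane and show its contribution vanishes as the radius tends to infinity. On $s=Re^{i\theta}$, $\theta\in(0,\pi)$, split the arc according to the sign of $\Re(\omega_1 e^{i\theta})$. On the sub-arc where $\Re(\omega_1 e^{i\theta})>0$, one has $|e^{\omega_1 s}-1|$ of order $e^{R\Re(\omega_1 e^{i\theta})}$, so the integrand is bounded by $R^{-d}\exp(R\Re((z-\omega_1)e^{i\theta}))$; uniform decay follows from $\Re(z-\omega_1)<0$ together with the angular condition $\arg(\omega_1-z)<\arg(\omega_1)$, which is equivalent to $\Im(z/\omega_1)>0$. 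On the complementary sub-arc the denominator is bounded below, and the integrand is dominated by $R^{-d}\exp(R\Re(ze^{i\theta}))$, whose decay uses $\Re(z)>0$ together with $\Im(z/\omega_1)>0$.

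Applying the residue theorem now yields
\[
\int_C\frac{e^{zs}s^{-d}}{e^{\omega_1 s}-1}\,ds=2\pi i\sum_{n\geq 1}\operatorname{Res}_{s=s_n}=\left(\frac{\omega_1}{2\pi i}\right)^{d-1}\sum_{n\geq 1}\frac{e^{2\pi inz/\omega_1}}{n^d}=\left(\frac{\omega_1}{2\pi i}\right)^{d-1}\Li_d(e^{2\pi iz/\omega_1}),
\]
as desired. The main obstacle is the semicircle decay estimate, where both inequalities in the hypothesis must be invoked simultaneously. A cleaner alternative route is to observe that each side is holomorphic on the connected region defined by the hypotheses, verify the identity in the elementary special case $\omega_1\in\bR_{>0}$ with $\Im(z)>0$ and $0<\Re(z)<\omega_1$ (where the semicircle decay is standard), and then conclude by analytic continuation.
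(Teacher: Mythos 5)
Your approach is essentially correct but genuinely different from the paper's. You close $C$ with a single large semicircle in the upper half-plane and sum all the residues at once; the paper instead exploits the periodicity $e^{\omega_1(s+2\pi i/\omega_1)}=e^{\omega_1 s}$ and compares $\int_{C_0}$ with the integral over the translated contour $C_N=C+2\pi iN/\omega_1$, picking up one pole per shift and then showing $\int_{C_N}\to 0$ because the substitution $s\mapsto s+2\pi iN/\omega_1$ leaves the denominator literally unchanged while producing the exponentially small prefactor $e^{2\pi iNz/\omega_1}$ (which beats $N^{-d}$ even for $d<0$). The paper's trick buys you something concrete: the translated contours stay at a fixed positive distance from every pole other than the one their detour avoids, so no lower bound on $|e^{\omega_1 s}-1|$ ever needs to be argued.

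That is exactly where your version has a genuine gap as written. The poles $s_n=2\pi in/\omega_1$ accumulate along the ray $\arg s=\pi/2-\arg\omega_1$, which is precisely the boundary between your two sub-arcs. A circle of radius $R$ meets that ray, and for radii with $R|\omega_1|$ near $2\pi\bZ$ it passes arbitrarily close to (or through) a pole; hence the claim that ``the denominator is bounded below'' on the sub-arc where $\Re(\omega_1 e^{i\theta})\leq 0$ is false for general $R$, and the companion estimate $|e^{\omega_1 s}-1|\gtrsim e^{R\Re(\omega_1 e^{i\theta})}$ likewise degenerates as $\theta$ approaches the critical angle. The standard repair is to take the limit along a sequence of radii $R_N$ chosen so that the circle crosses the pole ray midway between consecutive poles (there $|e^{\omega_1 s}-1|\geq c>0$ uniformly), and to note that in a neighbourhood of the critical angle both $\Re(ze^{i\theta})$ and $\Re((z-\omega_1)e^{i\theta})$ are negative (this is where $\Im(z/\omega_1)>0$ enters), so either bound suffices there. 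Your proposed fallback via analytic continuation from $\omega_1\in\bR_{>0}$ is sound in principle --- both sides are holomorphic on the connected parameter domain --- but it does not actually dodge this issue, since for real $\omega_1$ the poles sit on the imaginary axis and the semicircle still threads through them; the same choice of radii is needed. With that one sentence added, your argument is complete, and the residue computation and the final summation agree with the paper's.
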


\begin{proof}
The integrand has poles at the points $2\pi i n/\omega_1$ for $n\in \bZ$, with residues
\[ 2\pi i\cdot \operatorname{Res}_{s=\frac{2\pi in}{\omega_1}} \Bigg(\frac{e^{zs}}{e^{\omega_1 s}-1} \cdot\frac{ds}{s^d}\Bigg)=\Big(\frac{\omega_1}{2\pi i}\Big)^{d-1}\cdot \frac{e^{2\pi i n z/\omega_1}}{n^d}.\]
Note that the assumption $\Im(z/\omega_1)>0$ ensures
that the power series expansion \eqref{po} defining $\Li_d(e^{2\pi i z/\omega_1})$ is absolutely convergent, and then the right-hand side of \eqref{hallow} is $(2\pi i)$ times the sum of the residues of the poles in the upper half-plane.
To give a rigorous proof of \eqref{hallow} we first note that since the integrand decays exponentially as $|\Re(s)|\to \infty$ there is a relation
\begin{equation}
\label{sey}\int_{C_0} \frac{e^{zs}\cdot  s^{-d}}{e^{\omega_1 s}-1} ds- \int_{C_N} \frac{e^{zs}\cdot  s^{-d}}{e^{\omega_1 s}-1} ds=\Big(\frac{\omega_1}{2\pi i}\Big)^{d-1}\cdot \sum_{n=1}^N \frac{e^{2\pi i n z/\omega_1}}{n^d},\end{equation}
where for each $N\geq 0$ we denote by $C_N$ the shifted contour $C+2N\pi i/\omega_1$. 
But 
\[\int_{C_N} \frac{e^{zs}\cdot  s^{-d}}{e^{\omega_1 s}-1} ds=e^{2\pi i Nz/\omega_1}\cdot N^{-d}\cdot \int_{C_0} \frac{e^{zs}}{e^{\omega_1 s}-1}\cdot  \bigg(\frac{s}{N}+\frac{2\pi i}{\omega_1}\bigg) ^{-d}ds.\]
The integral on the right can easily be bounded independently of $N$, so using the hypothesis $\Im(z/\omega_1)>0$ again, we conclude that
the integral over $C_N$ in \eqref{sey} tends to 0 as $N\to \infty$. 
 \end{proof}

Note that differentiating \eqref{hallow} gives the relation
\begin{equation}
\label{glob}-\int_{C} \frac{e^{(z+\omega_1)s}\cdot  s^{1-d}}{(e^{\omega_1 s}-1)^2} ds=\frac{d}{d\omega_1} \bigg(\Big(\frac{\omega_1}{2\pi i}\Big)^{d-1}\cdot \Li_{d}(e^{2\pi i z/\omega_1})\bigg).\end{equation}

We shall also need an analogue of \eqref{glob}  for $z=0$ which involves the Riemann zeta function. Recall that  $\zeta(x)$ is a meromorphic function of $x\in \bC$ which is regular except for a simple pole at $x=1$, and satisfies
\[\zeta(k)=\sum_{n\geq 1} \frac{1}{n^k}=\Li_k(1),\]
for integers $k\geq 2$. For integers $k\geq 0$  one has  \begin{equation}
\label{negzeta}\zeta(-k)=\frac{(-1)^k \cdot B_{k+1}}{k+1},\end{equation}
where $B_{k+1}$ denotes the $(k+1)$st Bernoulli number. 

\begin{prop}
\label{useful}
Take $\omega_1\in \bC^*$ with $\Re(\omega_1)>0$. Then for all $d\in \bZ$  there is a relation
\begin{equation}
\label{spring} -\int_{C} \frac{e^{\omega_1 s}\cdot  s^{1-d}}{(e^{\omega_1 s}-1)^2} \, ds =\frac{(d-1)\cdot \zeta(d)}{2\pi i}\cdot \Big(\frac{\omega_1}{2\pi i}\Big)^{d-2}, \end{equation}
where the contour $C$ is as in Propositon \ref{sme}, and in the case $d=1$  the right-hand side of \eqref{spring} is defined by setting $(d-1)\cdot \zeta(d)=1$. \end{prop}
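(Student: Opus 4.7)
The plan is to specialise the derivative identity \eqref{glob} to $z = 0$. Since Proposition \ref{sme} requires $0 < \Re(z) < \Re(\omega_1)$ and $\Li_d(1)$ equals $\zeta(d)$ only for $\Re(d) > 1$, I would first establish the formula for integers $d \geq 2$ by a careful limit, and then extend to arbitrary integer $d$ by analytic continuation in the parameter $d$.

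For the first step, I would carry out the $\omega_1$-derivative on the right-hand side of \eqref{glob} using the identity $x\cdot\Li_d'(x) = \Li_{d-1}(x)$. This rewrites \eqref{glob} as
\[
-\int_C \frac{e^{(z+\omega_1)s}\, s^{1-d}}{(e^{\omega_1 s}-1)^2}\,ds = \Big(\frac{\omega_1}{2\pi i}\Big)^{d-2}\bigg[\frac{(d-1)\,\Li_d(e^{2\pi i z/\omega_1})}{2\pi i} - \frac{z\,\Li_{d-1}(e^{2\pi i z/\omega_1})}{\omega_1}\bigg].
\]
Now send $z \to 0$ inside the region of validity, with $\Im(z/\omega_1) > 0$. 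On the left, the integrand converges pointwise to that of \eqref{spring}; a $z$-uniform integrable dominator on $C$, available because $\Re(\omega_1) > 0$ forces exponential decay at $\pm\infty$, justifies dominated convergence. On the right, $\Li_d(e^{2\pi i z/\omega_1}) \to \zeta(d)$ by continuity of $\Li_d$ at $1$ for $d \geq 2$, and the second bracketed term tends to $0$ because $\Li_{d-1}(e^y)$ is at worst logarithmically singular at $y = 0$ (which occurs only at $d = 2$), a singularity suppressed by the prefactor $z$. This establishes \eqref{spring} for all integers $d \geq 2$.

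Finally, I would extend to arbitrary integer $d$ by analytic continuation. The right-hand side of \eqref{spring} is entire in $d$: the factor $(d-1)$ cancels the unique pole of $\zeta$, and $\lim_{d\to 1}(d-1)\zeta(d) = 1$, consistent with the stated convention. For the left-hand side, on any compact $K \subset \bC$ the integrand is dominated on $C$ uniformly in $d \in K$ by an integrable function of the form $\bigl(|s|^M + |s|^{-M}\bigr)\cdot e^{-c|s|}$ for appropriate $M, c > 0$, so Morera's theorem makes the integral entire in $d$. Since the two entire functions of $d$ agree on integers $d \geq 2$, they agree for all $d \in \bC$, proving \eqref{spring} for every $d \in \bZ$. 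I expect the main technical point to be the dominated convergence argument at $z \to 0$, particularly the borderline case $d = 2$, where one must use the precise form of the logarithmic singularity of $\Li_1(e^y) = -\log(1-e^y)$ at $y = 0$; the subsequent analytic continuation step is routine.
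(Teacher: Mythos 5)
Your first step (the limit $z\to 0$ in \eqref{glob} for integers $d\geq 2$) is sound and is essentially the paper's own argument for that range, so no complaint there. The problem is the extension to $d\leq 1$. You argue that both sides of \eqref{spring} are entire functions of $d$ and that "since the two entire functions of $d$ agree on integers $d\geq 2$, they agree for all $d\in\bC$." This is false as a principle: the identity theorem requires the agreement set to have an accumulation point, and $\{2,3,4,\dots\}$ has none. Two entire functions agreeing on the positive integers can differ by $c\sin(\pi d)$. Worse, the relevant growth rates here are exactly at the threshold where Carlson-type rescues fail: on the contour $C$ one has $\arg s$ ranging over $[0,\pi]$, so $|s^{1-d}|$ grows like $e^{\pi|\Im d|}$ as $\Im d\to\pm\infty$, which is precisely the exponential type $\pi$ excluded by Carlson's theorem (and realized by $\sin(\pi d)$). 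There is also a secondary ambiguity you would need to resolve before even speaking of "entire functions of $d$": for non-integer $d$ both $s^{1-d}$ on $C$ and $(\omega_1/2\pi i)^{d-2}$ require branch choices, and the claimed identity depends on making them compatibly.

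To repair this you would either have to prove the identity on an \emph{open} set of complex $d$ (e.g.\ all $d$ with $\Re(d)>1$, by redoing the residue computation of Proposition \ref{sme} with a fixed branch of $s^{-d}$, after which the identity theorem genuinely applies), or treat $d\leq 1$ directly. The paper does the latter: for $d<0$ the integrand is regular at $s=0$, so $C$ may be replaced by $\bR$ and the symmetry $s\leftrightarrow -s$ kills the even cases (consistently with the trivial zeros $\zeta(-2k)=0$), while the odd cases follow from the classical integral representation $2\int_0^\infty \frac{s^{-d}}{e^{\omega_1 s}-1}\,ds=(\omega_1/2\pi i)^{d-1}\zeta(d)$ and differentiation in $\omega_1$; the cases $d=0$ and $d=1$ are an explicit residue computation and an explicit primitive, respectively. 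Note that the negative-$d$ cases secretly encode the functional equation of $\zeta$ (the values $\zeta(1-2g)$ in terms of Bernoulli numbers), which is exactly the information your continuation argument would need to produce from nothing.
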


\begin{proof}
When $d\geq 2$ the argument of Proposition \ref{sme} also applies with $\Re(z)=0$ and hence yields
\[\int_{C} \frac{s^{-d}}{e^{\omega_1 s}-1} ds=\Big(\frac{\omega_1}{2\pi i}\Big)^{d-1}\cdot \zeta(d). \]
The result then follows by differentiating with respect to $\omega_1$.

When $d<0$ the integrand is regular at $s=0$ so we may replace the integral along $C$ by one along $\bR$. The symmetry under $s\leftrightarrow -s$ then forces the integral to be zero unless $d$ is odd. In that case the standard integral representation of the zeta function together with the duplication formula shows that 
\[2\int_{0}^\infty \frac{s^{-d}}{e^{\omega_1 s}-1} ds=\Big(\frac{\omega_1}{2\pi i}\Big)^{d-1}\cdot \zeta(d).\]
Differentiating with repsect to $\omega_1$ then gives
\[-\int_{C} \frac{e^{\omega_1 s}\cdot  s^{1-d}}{(e^{\omega_1 s}-1)^2} ds =-2\int_0^\infty \frac{e^{\omega_1 s}\cdot  s^{1-d}}{(e^{\omega_1 s}-1)^2}  ds=\frac{(d-1)}{2\pi i}\cdot \Big(\frac{\omega_1}{2\pi i}\Big)^{d-2}\cdot \zeta(d).\]

When $d=0$ the identity \eqref{spring} can be checked by a simple residue calculation. Since the integrand is invariant under $s\leftrightarrow -s$ we can combine the integral over $C$ and $-C$ to obtain
\[-\int_{C} \frac{e^{\omega_1 s}\cdot  s}{(e^{\omega_1 s}-1)^2} ds =\frac{1}{2}\cdot (2\pi i)\operatorname{Res}_{s=0} \bigg(\frac{e^{\omega_1 s} \cdot s}{(e^{\omega_1 s}-1)^2}  ds\bigg)=\frac{1}{2}\cdot \frac{2\pi i}{\omega_1^2}.\]
Since $\zeta(0)=-1/2$ this agrees with \eqref{spring}.
Finally, when $d=1$ the integrand has an obvious primitive and we obtain
\[-\int_{C} \frac{e^{\omega_1 s}}{(e^{\omega_1 s}-1)^2} ds=\frac{1}{\omega_1}\cdot \bigg[\frac{1}{e^{\omega_1 s}-1}\bigg]_{-\infty}^{\infty}=\frac{1}{\omega_1},\]
which matches with our definition of the right-hand side of \eqref{spring} in this case.
\end{proof}

\subsection{Asymptotic expansions as $\omega_2\to 0$}
\label{sss}

In this section we  give asymptotic expansions for the  functions $F$ and $G$ as the parameter $\omega_2\to 0$. 

\begin{prop}
\label{ass}
Fix $z\in \bC$ and $\omega_1\in \bC^*$ with  $0<\Re(z)<\Re(\omega_1)$ and $\Im(z/\omega_1)>0$. Then  there are asymptotic expansions
 \begin{equation}\label{fass}\log F(z\b\omega_1,\omega_2)\sim 
\sum_{k\geq 0} \frac{B_k\cdot \omega_2^{k-1} }{k!}  \cdot \Big(\frac{2\pi i }{\omega_1}\Big)^{k-1}\cdot  \Li_{2-k}(e^{2\pi i z/\omega_1}),\end{equation}
\begin{equation}\label{gas}\log G(z\b\omega_1,\omega_2)\sim  \sum_{k\geq 0} \frac{B_k\cdot \omega_2^{k-1} }{k!}  \cdot \frac{d}{d\omega_1} \bigg(\Big(\frac{2\pi i }{\omega_1}\Big)^{k-2}\cdot  \Li_{3-k}(e^{2\pi i z/\omega_1})\bigg),\end{equation}
\begin{equation}\label{gass}\log G(z\b\omega_2,\omega_1)\sim \sum_{k\geq 0} \frac{(k-1)\cdot B_k\cdot \omega_2^{k-2} }{k!}  \cdot\Big(\frac{2\pi i }{\omega_1}\Big)^{k-2}\cdot  \Li_{3-k}(e^{2\pi i z/\omega_1}),\end{equation}
valid as $\omega_2\to 0$ in any closed subsector $\Sigma$ of the half-plane $\Re(\omega_2)>0$.
\end{prop}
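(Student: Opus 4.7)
The plan is to derive all three expansions directly from the integral representations \eqref{homeagain} and \eqref{homealone} by inserting the Bernoulli generating-function expansion
\[
\frac{1}{e^{\omega_2 s}-1}=\sum_{k\geq 0}\frac{B_k}{k!}\,\omega_2^{k-1}\,s^{k-1},
\]
interchanging sum and integral, and evaluating each resulting $\omega_2$-independent integral by Proposition \ref{sme}. For \eqref{fass}, substituting this expansion into \eqref{homeagain} reduces the $k$-th term to $\int_C e^{zs}\,s^{k-2}/(e^{\omega_1 s}-1)\,ds$, which by \eqref{hallow} with $d=2-k$ equals $(2\pi i/\omega_1)^{k-1}\Li_{2-k}(e^{2\pi i z/\omega_1})$. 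For \eqref{gas}, the same substitution into the integral for $\log G(z\b\omega_1,\omega_2)$ produces integrals of the form $-\int_C e^{(z+\omega_1)s}\,s^{k-2}/(e^{\omega_1 s}-1)^2\,ds$; these are the left-hand sides of \eqref{glob} with $d=3-k$ and therefore yield $\frac{d}{d\omega_1}\!\bigl((2\pi i/\omega_1)^{k-2}\Li_{3-k}(e^{2\pi i z/\omega_1})\bigr)$.

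For \eqref{gass} the roles of $\omega_1$ and $\omega_2$ in the definition of $G$ are swapped, and the factor to be expanded is $-e^{\omega_2 s}/(e^{\omega_2 s}-1)^2$. I would obtain its Laurent series in $\omega_2 s$ by formally differentiating the Bernoulli expansion in $s$ and dividing by $\omega_2$, giving
\[
\frac{-e^{\omega_2 s}}{(e^{\omega_2 s}-1)^2}=\sum_{k\geq 0}\frac{(k-1)\,B_k}{k!}\,\omega_2^{k-2}\,s^{k-2}.
\]
Substituting into the integral representation for $\log G(z\b\omega_2,\omega_1)$ and invoking \eqref{hallow} once more, now with $d=3-k$, reproduces \eqref{gass} term by term. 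So in all three cases the stated series arises as a formal consequence of a Bernoulli expansion and Proposition \ref{sme}.

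The remaining task is to upgrade these formal identifications to genuine asymptotic expansions. Since the Bernoulli expansion of $1/(e^{\omega_2 s}-1)$ has radius of convergence $2\pi/|\omega_2|$ in $s$, it cannot simply be integrated term by term along the whole contour $C$; the plan here is a standard Watson-lemma argument. Truncate the expansion at order $N$ with remainder $R_N(\omega_2 s)$ satisfying $|R_N|=O(|\omega_2 s|^N)$ near the origin and $|R_N|=O(1)$ wherever $\Re(\omega_2 s)$ is bounded below, and split $C$ at $|s|\sim|\omega_2|^{-1/2}$: the inner piece contributes $O(\omega_2^{N/2})$ thanks to the bound on $R_N$ and the regularity of $e^{zs}/[s(e^{\omega_1 s}-1)]$, while the outer piece gives an error exponentially small in $|\omega_2|^{-1/2}$ using the decay of $e^{zs}/(e^{\omega_1 s}-1)$ along $C$. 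Since $N$ is arbitrary, the asymptotic claim follows, and the $G$-expansions are handled identically. The principal obstacle is ensuring that these bounds are uniform as $\omega_2\to 0$ through an arbitrary closed subsector $\Sigma\subset\{\Re(\omega_2)>0\}$: the sectorial hypothesis is precisely what keeps $\Re(\omega_2 s)$ controlled along a suitably rotated contour, so that the tail of $1/(e^{\omega_2 s}-1)$ does not spoil the remainder estimate.
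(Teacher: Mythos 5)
Your formal derivation is exactly the paper's: insert the Bernoulli expansion of $1/(e^{\omega_2 s}-1)$ (respectively its $s$-derivative, giving the series for $-e^{\omega_2 s}/(e^{\omega_2 s}-1)^2$, for \eqref{gass}) into the integral representations \eqref{homeagain} and \eqref{homealone}, interchange sum and integral, and evaluate via \eqref{hallow} and \eqref{glob}; your bookkeeping $d=2-k$ and $d=3-k$ is correct in all three cases. Where you differ is only in the justification of the interchange. The paper normalizes the remainder as $R_N(x)=x^{-N}\bigl(\tfrac{1}{e^x-1}-\sum_{k\le N}\tfrac{B_k}{k!}x^{k-1}\bigr)$, notes that after deforming $C$ to the real axis outside the unit circle plus the upper unit semicircle the argument $\omega_2 s$ always lies in $\pm\Sigma$ or in the unit disc, where $R_N$ is uniformly bounded, and so bounds the error by $K|\omega_2|\int_C\bigl|s^{N-1}e^{zs}/(e^{\omega_1 s}-1)\bigr|\,|ds|$ in one stroke; you instead split the contour at $|s|\sim|\omega_2|^{-1/2}$ in Watson's-lemma style. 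Your version closes too, but two small points need care. First, the unnormalized remainder is \emph{not} $O(1)$ where $\Re(\omega_2 s)$ is bounded below: the truncated Bernoulli polynomial grows like $|\omega_2 s|^{N-1}$ there (and for $s<0$ one in fact has $\Re(\omega_2 s)\to-\infty$); this is harmless only because, as you say, the exponential decay of $e^{zs}/(e^{\omega_1 s}-1)$ on the outer contour (which uses both $\Re(z)>0$ and $\Re(z)<\Re(\omega_1)$) absorbs polynomial growth. Second, an $O(\omega_2^{N/2})$ error for the truncation at order $N$ does not by itself give the $o(\omega_2^{N-1})$ needed at that order; one must truncate at roughly $2N$ and discard the extra (already small) retained terms — your ``$N$ arbitrary'' remark supplies this standard step implicitly. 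Neither point is a genuine gap.
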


\begin{proof}
We focus first on \eqref{fass}, the other parts will then follow by a similar argument.  Using  the integral formula \eqref{homeagain} and the Laurent expansion\begin{equation}
\label{tay}\frac{1}{e^{\omega_2 s}-1}=\sum_{k\geq 0} \frac{B_k  \cdot (\omega_2 s)^{k-1}}{k!}= \frac{1}{\omega_2 s} - \frac{1}{2} + \frac{\omega_2 s}{12} + \cdots,\end{equation}
gives an expression
\[\log F(z\b \omega_1,\omega_2)=\int_C \frac{e^{zs}}{(e^{\omega_1 s}-1)(e^{\omega_2 s}-1)}\frac{ds}{s} =\int_C  \, \sum_{k\geq 0} \frac{ B_k \cdot \omega_2^{k-1} \cdot s^{k-1} }{k!}  \cdot \frac{e^{zs}  }{e^{\omega_1 s}-1}\cdot \frac{ds}{s}.\]
The result then follows formally by exchanging the order of integration and summation and using the identity \eqref{hallow}.

To justify this we must prove that for each integer $N> 0$
\begin{equation}
\label{frac}\frac{1}{\omega_2^{N-1}}\int_C \bigg(\frac{1}{e^{\omega_2 s}-1} - \sum_{k=0}^N \frac{ B_k \cdot (\omega_2 s)^{k-1} }{k!}\bigg) \cdot \frac{e^{zs}  }{e^{\omega_1 s}-1}\cdot \frac{ds}{s} \to 0\end{equation}
as $\omega_2\to 0$ in the closed subsector $\Sigma$.
Since $F$ is invariant under rescaling all variables we can assume that $|\omega_1|<1$. Let us rewrite the left-hand side of \eqref{frac} as \begin{equation}\label{flo}\omega_2\cdot I(\omega_2)=\omega_2\cdot \int_C R_N(\omega_2 s) \cdot \frac{ s^{N} \cdot e^{zs}  }{e^{\omega_1 s}-1}\cdot \frac{ds}{s},\end{equation}
where $R_N$ denotes the meromorphic function
\[R_N(x)=\frac{1}{x^{N}}\cdot \bigg(\frac{1}{e^x -1} - \sum_{k=0}^N \frac{ B_k \cdot x^{k-1} }{k!}\bigg).\]
We are reduced to proving that the integral $I(\omega_2)$ is bounded as $\omega_2\to 0$ in $\Sigma$.

The  function $R_N(x)$ is regular on the unit disc and on $\Sigma$, and tends to 0 as $|x|\to \infty$ with $\pm x\in \Sigma$. Thus there is a constant $K>0$ such that
\[\pm x\in \Sigma\text{ or } |x|<1\implies |R_N(x)|<K.\]
Since we assumed that $|\omega_1|<1$, we can take the contour $C$ in \eqref{flo} to consist of the union of the segments $(-\infty,1)$ and $(1,\infty)$ of the real axis, together with the intersection of the unit circle with the upper half-plane.
 It follows that
\[s\in C\text{ and } \omega_2\in \Sigma \text{ with }|\omega_2|<1 \implies |R_N(\omega_2 s)|<K.\]
This then gives a bound
\[|I(\omega_2)|<K\cdot \int_C \,\bigg|\frac{ s^{N} \cdot e^{zs}  }{e^{\omega_1 s}-1}\cdot \frac{ds}{s}\bigg|<\infty,\]
independently of $\omega_2$.
This completes the proof of \eqref{fass}.

The other two expansions can be derived in exactly the same way. For \eqref{gas} we use the same Laurent expansion \eqref{tay} and the  identity \eqref{glob},
and for \eqref{gass} we use the Laurent series
\begin{equation}
\label{sinh}\frac{e^{\omega_2 s}}{(e^{\omega_2 s}-1)^2}=
\sum_{k\geq 0} \frac{(1-k)\cdot B_k\cdot (\omega_2 s)^{k-2}}{k!}\\=\frac{1}{(\omega_2 s)^2}-\frac{1}{12}+\frac{1}{240}(\omega_2 s)^2+\cdots  \end{equation}
obtained by differentiating \eqref{tay}, together with the identity \eqref{hallow}.
\end{proof}

Note that the  expansions of Proposition \ref{ass} are related   by the identities \eqref{nosven} and  \eqref{latehome}. We shall also need the following analogues of the expansions \eqref{gas} and \eqref{gass} when $z=0$.

\begin{prop}
\label{assy}
Fix $\omega_1\in \bC^*$ with $\Re(\omega_1)>0$. Then there are asymptotic expansions
\begin{equation}\label{gassi}\log G(0\b\omega_1,\omega_2)\sim \frac{\zeta(3)}{\pi i}\cdot \frac{\omega_1}{2\pi i \omega_2}  +\frac{\pi i}{24}  +\sum_{k\geq 2} \frac{ B_k\cdot B_{k-2}}{(2\pi i)\cdot k!}  \cdot  \Big(\frac{2\pi i \omega_2}{\omega_1}\Big)^{k-1},\end{equation}
\begin{equation}\label{gassi2}\log G(0\b\omega_2,\omega_1)\sim -\zeta(3)  \cdot\Big(\frac{\omega_1}{2\pi i\omega_2}\Big)^2+ \frac{1}{12}\log\Big(\frac{\omega_1}{\omega_2}\Big)\\- \sum_{k\geq 3} \frac{B_k\cdot   B_{k-2}}{k\cdot (k-2)!\cdot (k-2)}  \cdot  \Big(\frac{2\pi i \omega_2}{\omega_1}\Big)^{k-2},\end{equation}
valid as $\omega_2\to 0$ in any closed subsector of the half-plane $\Re(\omega_2)>0$.
\end{prop}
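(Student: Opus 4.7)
The plan is to derive both expansions in close analogy with the proof of Proposition \ref{ass}, by specialising the integral representation \eqref{homealone} to $z=0$ and substituting an appropriate Laurent series in $\omega_2$. For \eqref{gassi} I would start from
\[\log G(0\b\omega_1,\omega_2)=\int_C \frac{-e^{\omega_1 s}}{(e^{\omega_1 s}-1)^2(e^{\omega_2 s}-1)}\,\frac{ds}{s},\]
substitute the Bernoulli expansion \eqref{tay} for $1/(e^{\omega_2 s}-1)$, and evaluate the resulting integrals
\[\int_C \frac{-e^{\omega_1 s}\,s^{k-2}}{(e^{\omega_1 s}-1)^2}\,ds\]
using the identity \eqref{spring} with $d=3-k$ (using the convention $(d-1)\zeta(d)=1$ when $d=1$, i.e.\ $k=2$). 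The $k=0$ term produces the singular $\zeta(3)/\omega_2$ leading term, the $k=1$ term produces the $\zeta(2)$ constant, and for $k\geq 2$ one converts $\zeta(3-k)$ to Bernoulli numbers via \eqref{negzeta} to match the series displayed in \eqref{gassi}.

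For \eqref{gassi2} I would apply the same representation after swapping the roles of $\omega_1,\omega_2$,
\[\log G(0\b\omega_2,\omega_1)=\int_C \frac{-e^{\omega_2 s}}{(e^{\omega_2 s}-1)^2(e^{\omega_1 s}-1)}\,\frac{ds}{s},\]
and substitute the Laurent expansion \eqref{sinh} for $e^{\omega_2 s}/(e^{\omega_2 s}-1)^2$. The $k$-th coefficient then involves $\int_C s^{k-3}/(e^{\omega_1 s}-1)\,ds$, which for $k\neq 2$ I evaluate by the $z=0$ limit of \eqref{hallow}, yielding $\zeta(3-k)\cdot(\omega_1/2\pi i)^{2-k}$. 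The $k=0$ term gives the leading $-\zeta(3)(\omega_1/2\pi i\omega_2)^2$; the $k=1$ term is killed by the factor $(1-k)=0$; and for $k\geq 3$ one again applies \eqref{negzeta} to obtain the general summand of \eqref{gassi2}.

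The main obstacle is the exceptional $k=2$ contribution in \eqref{gassi2}, which is the source of the logarithmic term $\tfrac{1}{12}\log(\omega_1/\omega_2)$. Here the naive application of \eqref{hallow} would require $\Li_1(1)$, which diverges. Instead I would evaluate $\int_C s^{-1}/(e^{\omega_1 s}-1)\,ds$ directly: differentiating in $\omega_1$ and applying the $d=1$ case of \eqref{spring} gives derivative $1/\omega_1$, so this integral equals $\log\omega_1$ plus a constant. The scaling invariance of $G(0\b\omega_2,\omega_1)$ under $(\omega_1,\omega_2)\mapsto(\lambda\omega_1,\lambda\omega_2)$ then forces the dependence to combine with an implicit $-\log\omega_2$ into $\log(\omega_1/\omega_2)$, with coefficient $B_2/2!=1/12$.

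Finally, the exchange of summation and integration must be justified uniformly in $\omega_2\to 0$ in a closed subsector of the right half-plane. I would do this exactly as in the proof of Proposition \ref{ass}: extract the partial remainder $R_N$ of the relevant Laurent expansion, note that $R_N$ is bounded on $\Sigma\cup\{|x|<1\}$ and that the remaining integrand is absolutely integrable along $C$ (after scaling $|\omega_1|<1$), yielding the required bound on the error term. The technical bookkeeping for \eqref{sinh} is identical to that for \eqref{tay} once one uses $|R_N'|$ in place of $|R_N|$, so no new estimate is required beyond what appears in the proof of Proposition \ref{ass}.
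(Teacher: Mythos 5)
Your derivation of \eqref{gassi} is correct and coincides with the paper's: substitute \eqref{tay} into the $z=0$ case of \eqref{homealone} and evaluate term-by-term with \eqref{spring}; since the kernel $e^{\omega_1 s}/(e^{\omega_1 s}-1)^2$ decays exponentially in both directions along $C$, every term integral converges and the remainder estimate from the proof of Proposition \ref{ass} goes through unchanged.

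Your derivation of \eqref{gassi2}, however, has a genuine gap. After substituting \eqref{sinh}, the $k$-th term integral is
\[\int_C\frac{s^{k-3}}{e^{\omega_1 s}-1}\,ds,\]
and since $1/(e^{\omega_1 s}-1)\to -1$ as $s\to-\infty$ (there is no factor $e^{zs}$ with $\Re(z)>0$ to provide decay once $z=0$), this integral diverges for \emph{every} $k\geq 2$: logarithmically at $k=2$ and polynomially for even $k\geq 4$ (odd $k\geq 3$ are saved only because $B_k=0$). Correspondingly, the ``$z=0$ limit of \eqref{hallow}'' you invoke does not exist for $d=3-k\leq 1$, because $\Li_d(x)$ has a pole at $x=1$ for $d\leq 0$ and a logarithmic singularity for $d=1$; the paper's Proposition \ref{useful} for $d\leq 1$ concerns the doubly-decaying kernel $e^{\omega_1 s}/(e^{\omega_1 s}-1)^2$ (and, for $d<0$, the half-line integral $\int_0^\infty$), not $\int_C s^{-d}/(e^{\omega_1 s}-1)\,ds$. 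So it is not only the $k=2$ term that is exceptional, and your final remainder estimate --- which requires the truncated integrand to be absolutely integrable along $C$ --- fails for the same reason. The paper's fix is to differentiate first: $\partial_{\omega_1}\log G(0\b\omega_2,\omega_1)=\int_C e^{(\omega_1+\omega_2)s}(e^{\omega_1 s}-1)^{-2}(e^{\omega_2 s}-1)^{-2}\,ds$ carries the good kernel in the $\omega_1$-variable, so \eqref{sinh} and \eqref{spring} apply to every term; one then integrates the resulting expansion term-by-term in $\omega_1$, the $k=2$ term $\tfrac{1}{12\omega_1}$ producing $\tfrac{1}{12}\log\omega_1$, with homogeneity fixing the constant of integration to give $\tfrac{1}{12}\log(\omega_1/\omega_2)$. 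Your homogeneity argument for pinning down the logarithm is the right idea, but it cannot rescue the divergent term integrals that precede it.
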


\begin{proof}
The expansion \eqref{gassi}  is proved in exactly the same way as \eqref{gas}, replacing the identity \eqref{glob} with \eqref{spring}, and using \eqref{negzeta} in the form
\begin{equation}
\label{negzeta2}
(2-k)\cdot \zeta(3-k)=(-1)^k \cdot B_{k-2}, \qquad k\geq 3,
\end{equation}
together with the well-known identity $\zeta(2)=\pi^2/6$. To prove  \eqref{gassi2} we first apply the argument of Proposition \ref{ass} to the integral 
\[\frac{\partial}{\partial \omega_1} \log G(0\b\omega_2,\omega_1)=\int_C \frac{e^{(\omega_1+\omega_2) s}\, ds}{(e^{\omega_1 s}-1)^2(e^{\omega_2 s}-1)^2},\]
using the Laurent series \eqref{sinh} and the identity \eqref{spring}. This gives
\begin{equation}
\label{done} \frac{\partial}{\partial \omega_1} \log G(0\b\omega_2,\omega_1)\sim \frac{\zeta(3)}{2\pi ^2}\cdot \frac{\omega_1}{\omega_2^2}+\frac{1}{12\omega_1}\\   +\sum_{k\geq 3} \frac{B_k\cdot  (k-1)(2-k)\cdot \zeta(3-k)}{(2\pi i) \cdot k!\cdot \omega_2}  \cdot  \Big(\frac{2\pi i\omega_2}{\omega_1}\Big)^{k-1}.\end{equation}
Integrating term-by-term  and using the identity \eqref{negzeta} then gives the result. The constant of integration is determined by the condition that $G(z\b\omega_1,\omega_2)$ is invariant under rescaling of all arguments.
 \end{proof}

\subsection{Asymptotic expansions as $\omega_2\to \infty$}
\label{sst}
We shall also need the asymptotic expansions of the functions $F$ and $G$  as $\omega_2\to \infty$. These involve the Bernoulli polynomials $B_n(x)$, which can be defined by the Laurent expansion \eqref{tay2} below.

\begin{prop}
\label{inflimit}
Fix $z\in \bC$ and $\omega_1\in \bC^*$ satisfying   $\Im(z/\omega_1)>0$. Then as $\omega_2\to \infty$ in any closed subsector $\Sigma$ of the half-plane $\Re(\omega_2)>0$ there are asymptotic expansions
 \begin{equation}\label{fass2}\log F(z\b\omega_1,\omega_2)\sim -\frac{\pi i}{12}\cdot \frac{ \omega_2}{\omega_1}+B_1(z/\omega_1)\cdot \log(\omega_2)+O(1),
\end{equation}
\begin{equation}\label{gas2}\log G(z\b\omega_1,\omega_
2)\sim  \frac{\zeta(3)}{4\pi^2}\cdot \frac{\omega_2^2}{\omega_1^2} + \frac{\pi i}{12}\cdot \frac{ z\omega_2}{\omega_1^2} + \frac{1}{2} \log(\omega_2)\cdot \frac{d}{d\omega_1} (\omega_1 \cdot B_2(z/\omega_1))+O(1),
\end{equation}
\begin{equation}\label{gass2}\log G(z\b\omega_2,\omega_1)\sim -\frac{\zeta(3)}{2\pi^2} \cdot \frac{\omega_2}{\omega_1}  -{\frac{\pi i}{12}\cdot   B_1(z/\omega_1)} + \sum_{k\geq 2}  \frac{(-1)^{k}\cdot B_k (z/\omega_1)\cdot B_{k-2}}{k!\cdot (2\pi i)} \cdot \bigg(\frac{2\pi i\omega_1}{\omega_2}\bigg)^{k-1}.\end{equation}
For \eqref{fass2} we need to assume $\Re(z)>0$,  and for \eqref{gas2} that $\Re(z+\omega_1)>0$.
\end{prop}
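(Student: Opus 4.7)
The strategy is to prove \eqref{gass2} first by direct computation from the integral representation \eqref{homealone}, and then to deduce \eqref{fass2} and \eqref{gas2} as consequences of the differential identities \eqref{latehome} and \eqref{nosven} from Proposition \ref{g}.

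To prove \eqref{gass2}, start from the integral representation \eqref{homealone} for $G(z|\omega_2,\omega_1)$ and perform the substitution $s = t/\omega_2$, setting $\mu = \omega_1/\omega_2$ and $u = z/\omega_1$, to obtain
\[
\log G(z|\omega_2,\omega_1) = \int_C \frac{-e^{(1+\mu u)t}}{(e^t-1)^2(e^{\mu t}-1)}\,\frac{dt}{t}.
\]
Expand $e^{\mu u t}$ as a Taylor series and $1/(e^{\mu t}-1)$ via the Laurent expansion \eqref{tay}, and collect powers of $\mu$ using the classical Bernoulli polynomial identity $\sum_{i+j=k} u^i B_j/(i!\,j!) = B_k(u)/k!$. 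This yields a formal series in $\mu$ whose coefficients are integrals of the form $-\int_C e^t t^{n-1}/(e^t-1)^2\,dt$. Evaluate each such integral via \eqref{spring} with $d = 2-n$, using the convention $(d-1)\zeta(d)|_{d=1}=1$ in the critical case $n=1$ (which produces the middle term of \eqref{gass2}), and converting $\zeta(2-n)$ to Bernoulli numbers for $n \geq 2$ via \eqref{negzeta}. The signs, factorials, and powers of $2\pi i$ then assemble into the right-hand side of \eqref{gass2}.

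The expansion \eqref{fass2} follows by using the identity \eqref{latehome}, $\partial_{\omega_2} \log F(z|\omega_1,\omega_2) = \partial_z \log G(z|\omega_2,\omega_1)$: differentiate \eqref{gass2} in $z$ via $B_k'(x) = kB_{k-1}(x)$ and integrate the resulting series in $\omega_2$. The $z$-independent leading term $-\zeta(3)\omega_2/(2\pi^2\omega_1)$ of \eqref{gass2} vanishes under $\partial_z$; the middle term produces the constant $-(i\pi)/(12\omega_1)$, which integrates to the linear contribution $-(i\pi/12)\omega_2/\omega_1$; the $k=2$ summand of the series in \eqref{gass2}, being proportional to $\omega_1/\omega_2$, integrates to $B_1(z/\omega_1)\log\omega_2$; and the $k \geq 3$ summands integrate to contributions of order $\omega_2^{2-k} = O(1)$, with the integration constant (depending only on $z$ and $\omega_1$) absorbed into the $O(1)$ remainder. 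The expansion \eqref{gas2} is obtained similarly from \eqref{gass2} via the symmetry \eqref{nosven}, $\partial_{\omega_2} \log G(z|\omega_1,\omega_2) = \partial_{\omega_1} \log G(z|\omega_2,\omega_1)$: differentiating \eqref{gass2} in $\omega_1$ by the product rule and integrating in $\omega_2$ produces the $\omega_2^2/\omega_1^2$ contribution from the leading summand, the $z\omega_2/\omega_1^2$ contribution from the middle term, and the coefficient $\tfrac{1}{2}\log\omega_2 \cdot \tfrac{d}{d\omega_1}(\omega_1 B_2(z/\omega_1))$ from the $k=2$ term of the series.

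The main obstacle is the rigorous justification of the formal expansion in \eqref{gass2} as a genuine asymptotic expansion: one must truncate the Laurent series of $1/(e^{\mu t}-1)$ after $N$ terms and bound the remainder $R_N(\mu t)$ uniformly in $t$ along $C$, so that the error integral is $O(\mu^N)$ uniformly as $\omega_2 \to \infty$ in any closed subsector of $\Re(\omega_2) > 0$. This follows the pattern of the proof of Proposition \ref{ass}, with only cosmetic modifications to accommodate the double factor $(e^t-1)^{-2}$ and the additional Taylor expansion of $e^{\mu u t}$.
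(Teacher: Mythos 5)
Your proposal follows essentially the same route as the paper: establish \eqref{gass2} directly from the integral representation \eqref{homealone} by expanding the $\omega_1$-dependent factor in the Bernoulli-polynomial Laurent series \eqref{tay2} (your two-step expansion of $e^{\mu u t}$ times $1/(e^{\mu t}-1)$, recombined via the convolution identity, is the same series), evaluating the coefficient integrals by \eqref{spring}, and justifying the truncation as in Proposition \ref{ass}; then derive \eqref{fass2} and \eqref{gas2} from \eqref{latehome} and \eqref{nosven} together with term-by-term integration in $\omega_2$. Two small points. First, a misattribution: the critical case $d=1$ (your $n=1$, i.e.\ $k=2$) produces the leading term $\tfrac{1}{2}\,\omega_1 B_2(z/\omega_1)/\omega_2$ of the series $\sum_{k\geq 2}$ in \eqref{gass2}, not the middle term $-\tfrac{i\pi}{12}B_1(z/\omega_1)$, which comes from $k=1$ with $d=2$ and $\zeta(2)=\pi^2/6$. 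Second, and more substantively, you propose to \emph{differentiate} the established asymptotic expansion \eqref{gass2} with respect to $z$ (for \eqref{fass2}) and with respect to $\omega_1$ (for \eqref{gas2}); differentiating an asymptotic expansion with respect to a parameter is not automatic and needs a word of justification — either local uniformity of the remainder estimates in $z$ and $\omega_1$ combined with Cauchy's integral formula, or, more simply, note that \eqref{latehome} and \eqref{nosven} give exact integral representations of $\partial_{\omega_2}\log F(z\b\omega_1,\omega_2)$ and $\partial_{\omega_2}\log G(z\b\omega_1,\omega_2)$ to which the identical expand-and-bound argument applies directly (this is what the paper does), after which integration in $\omega_2$ — which \emph{is} always legitimate for asymptotic expansions — yields \eqref{fass2} and \eqref{gas2} up to the $O(1)$ integration constant. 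With these two repairs the argument is complete; the substitution $s=t/\omega_2$ you perform also requires a routine contour rotation back to $C$, valid within a closed subsector of $\Re(\omega_2)>0$.
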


\begin{proof}
Let us start with \eqref{gass2}. Using the integral representation \eqref{homealone} we have
\[\log  G(z\b\omega_2,\omega_1)=\int_C \frac{-e^{(z+\omega_2) s}}{(e^{\omega_1 s}-1)(e^{\omega_2 s}-1)^2}\cdot\frac{ds}{s}.\]
Note that this is valid whenever $-\Re(\omega_2)<\Re(z)<\Re(\omega_1+\omega_2)$, a condition  which holds automatically for sufficiently large  $|\omega_2|$ under the assumption $\omega_2\in \Sigma$. 
Applying the 
Laurent expansion
\begin{equation}
\label{tay2}\frac{e^{z s}}{e^{\omega_1 s}-1}=\sum_{k\geq 0} \frac{B_k (z/\omega_1) \cdot (\omega_1 s)^{k-1}}{k!},\end{equation}
gives an expression
\[ \log  G(z\b\omega_2,\omega_1)=\int_C\, \sum_{k\geq 0}  \frac{B_k (z/\omega_1) \cdot (\omega_1 s)^{k-1}}{k!}\cdot \frac{-e^{\omega_2 s}} {(e^{\omega_2 s}-1)^2} \cdot \frac{ds}{s}.\]
Exchanging the order of integration and summation, and using \eqref{spring} gives
\[\log  G(z\b\omega_2,\omega_1)\sim  \sum_{k\geq 0}  \frac{B_k (z/\omega_1)}{k!}\cdot   \Big(\frac{2\pi i \omega_1}{\omega_2}\Big)^{k-1} \cdot \frac{(2-k)\cdot \zeta(3-k)}{2\pi i},\]
where as usual we set $(2-k)\cdot \zeta(3-k)=1$  when $k=2$.  Using the identity \eqref{negzeta2} this expression reduces to \eqref{gass2}.

To justify this we must show that for $N\gg 0$
\begin{equation}\label{7a+}\omega_2^{N-1}\cdot \int_C \Bigg(\frac{e^{zs}}{e^{\omega_1 s}-1} - \sum_{k\geq 0}^N  \frac{B_k (z/\omega_1)\cdot (s\omega_1)^k}{k!}\Bigg)\cdot   \frac{-e^{\omega_2 s}}{(e^{\omega_2 s}-1)^2} \cdot \frac{ds}{s}\to 0\end{equation}
as $\omega_2\to \infty$ in the closed subsector $\Sigma$ of the right-hand half-plane. We can rewrite the left-hand side of this expression as
\[\frac{1}{\omega_2}\cdot I(\omega_2)=\frac{1}{\omega_2}\cdot \int_C \frac{R_N(s)}{s^N}\cdot \frac{-e^{\omega_2 s}\cdot (\omega_2 s)^{N}}{(e^{\omega_2 s}-1)^2} \cdot \frac{ds}{s},\]
where $R_N(s)$ denotes the expression in brackets in \eqref{7a+}. We must show that $I(\omega_2)$ is bounded for $\omega_2\in \Sigma$ with $|\omega_2|\gg 0$.

When $|\omega_2|>1$ we can  replace the contour $C$ by $|\omega_2|^{-1}\cdot C$ without changing the value of $I(\omega_2)$. The function $f(s)=R_N(s)/s^N$ is regular near $s=0$ and is bounded on the real axis as $|s|\to \infty$. We can therefore  find a bound $|f(s/|\omega_2|)|<K$ valid for $s\in C$ and $|\omega_2|>1$.  Replacing $s$ by $s/|\omega_2|$ then gives
\begin{equation}
\label{mot2}|I(\omega_2)|<K\cdot \int_C \bigg| \frac{e^{\eta s}\cdot (\eta s)^{N}}{(e^{\eta s}-1)^2} \cdot \frac{ds}{s}\bigg|,\end{equation}
where $\eta=\omega_2/|\omega_2|$. Since $\eta$  lies on the compact subset of $\bC^*$ consisting of the intersection of the unit circle with the sector $\Sigma$, we can find a uniform bound for the integral appearing on the right of \eqref{mot2}, which gives the claim.

For \eqref{fass2} we apply the same argument to the integral
\[\frac{\partial}{\partial \omega_2} \log  F(z\b\omega_1,\omega_2)=\int_C \frac{-e^{(z+\omega_2) s}\, ds}{(e^{\omega_1 s}-1)(e^{\omega_2 s}-1)^2}\]
to obtain an expansion
\[\frac{\partial}{\partial \omega_2} \log  F(z\b\omega_1,\omega_2)\sim  
-\frac{\pi i}{12\omega_1}+\frac{B_1(z/\omega_1)}{\omega_2} + \sum_{k\geq 2}  \frac{(-1)^{k-1}\cdot B_k (z/\omega_1)\cdot B_{k-1}}{k!\cdot \omega_2} \cdot \bigg(\frac{2\pi i\omega_1}{\omega_2}\bigg)^{k-1}.\]
Integrating with respect to $\omega_2$ gives \eqref{fass2}, with the constant of integration again determined by homogeneity. Similarly, for \eqref{gas2} we use the integral
\[\frac{\partial}{\partial \omega_2} \log  G(z\b\omega_1,\omega_2)=\int_C \frac{e^{(z+\omega_1+\omega_2) s}\, ds}{(e^{\omega_1 s}-1)^2(e^{\omega_2 s}-1)^2}\]
together with \eqref{spring} and the 
 Laurent expansion
\begin{equation}
\frac{-e^{(z+\omega_1)s}}{(e^{\omega_1 s}-1)^2}=\sum_{k\geq 0} \frac{s^{k-2}}{k!} \cdot \frac{d}{d\omega_1} \big(B_k (z/\omega_1)   \omega_1^{k-1}\big),\end{equation}
obtained by differentiating \eqref{tay2}, to get an expression
\[\frac{\partial}{\partial \omega_2} \log  G(z\b\omega_1,\omega_2)=\frac{\zeta(3)}{2\pi^2} \cdot \frac{\omega_2}{\omega_1^2} +\frac{\pi i}{12}\cdot\frac{z}{\omega_1^2}
+\sum_{k\geq 2} \frac{(-1)^{k} \cdot B_{k-2}\cdot (2\pi i)^{k-2} }{ k!\cdot \omega_2^{k-1}}\cdot \frac{d}{d\omega_1}\big(B_k(z/\omega_1)\cdot \omega_1^{k-1}\big).\]
Integrating with respect to $\omega_2$ then gives \eqref{gas2}.
\end{proof}

\section{Solution to the Riemann-Hilbert problem}
\label{solution}

In this section we solve the conifold Riemann-Hilbert problems of Section \ref{bps} using the special functions $F$ and $G$ introduced in the last section.

\subsection{Exponential pre-factors}

Let us again take $\omega_1,\omega_2\in \bC^*$ with $\omega_1/\omega_2\notin \bR_{<0}$. We consider the following meromorphic function of $z\in \bC$ 
\begin{equation}
\label{defh}H(z\b \omega_1,\omega_2)=\frac{{G}(z\b \omega_1,\omega_2)}{ G(0\b  \omega_1,\omega_2)}.\end{equation}
This function, together with $F(z\b\omega_1,\omega_2)$  will form the basis of the solution to the Riemann-Hilbert problem which we give in the next subsection.
However, to give the correct limiting behavior as $\omega_2\to 0$ and  $\omega_2\to \infty$ we first need to modify them by some exponential prefactors.

Under the assumption $\Im(z/\omega_1)>0$ we define
\begin{gather}
\label{mod}F^*(z\b\omega_1,\omega_2)=F(z\b\omega_1,\omega_2) \cdot e^{Q_F(z\b\omega_1,\omega_2)},\\ H^*(z\b\omega_1,\omega_2)=H(z\b\omega_1,\omega_2) \cdot e^{Q_H(z\b\omega_1,\omega_2)},\end{gather}
where $Q_F$ and $Q_H$ are Laurent polynomials in $\omega_2$ given explicitly by
\begin{equation*}
Q_F(z\b \omega_1,\omega_2)=  -\frac{\omega_1}{2\pi i \omega_2}\cdot \Li_2(e^{2\pi i z/\omega_1}) -\frac{1}{2}\log(1-e^{2\pi i z/\omega_1})+\frac{\pi i }{12}\cdot\frac{\omega_2}{\omega_1},\end{equation*}
\begin{equation*}Q_H(z\b \omega_1,\omega_2)= \frac{d}{d\omega_1} \bigg({\frac{1}{\omega_2}\Big(\frac{\omega_1}{2\pi i}\Big)^2 \cdot {(\zeta(3)}-\Li_3(e^{2\pi i z/\omega_1}))}+\frac{\omega_1}{4\pi i } (\Li_2(e^{2\pi i z/\omega_1})-\zeta(2))\bigg) -\frac{\pi i}{12}\cdot\frac{z\omega_2}{\omega_1^2}.\end{equation*}
These expressions are uniquely determined by the asymptotic properties of the resulting functions $F^*$ and $H^*$ (see the proof of Theorem \ref{xmas} below). 


\begin{prop}
\label{need}
Take $z\in \bC$ and $\omega_1,\omega_2\in \bC^*$ satisfying $\omega_1/\omega_2\notin \bR_{<0}$ and $\Im(z/\omega_1)>0$. 
The functions $F^*$ and $H^*$ introduced above satisfy the difference relations
\begin{equation}\label{early}\frac{F^*(z+\omega_1\b\omega_1,\omega_2)}{F^*(z\b\omega_1,\omega_2)}=\frac{1}{1-x_2},\end{equation}
 \begin{equation}\label{late}\frac{H^*(z+\omega_1\b\omega_1,\omega_2)}{H^*(z\b\omega_1,\omega_2)}=F^*(z+\omega_1\b\omega_1,\omega_2)^{-1}.\end{equation}
When $\Im(\omega_1/\omega_2)>0$ there are  also reflection relations
\begin{equation}
\label{reff2}F^*(z\b\omega_1,\omega_2)\cdot F^*(z\b\omega_1,-\omega_2)=  \prod_{k\geq 0} \big(1-x_2 q_2^{k}\big)\cdot \prod_{k\geq 1} \big(1-x_2^{-1} q_2^{k}\big)^{-1},\end{equation}
\begin{equation}
\label{refg2}H^*(z\b\omega_1,\omega_2)\cdot H^*(z\b\omega_1,-\omega_2)= {\prod_{k\geq 1} \big(1-x_2 q_2^{k}\big)^{k}}\cdot{ \prod_{k\geq 1} \big(1-x_2^{-1} q_2^{k}\big)^{k}}\cdot \prod_{k\geq 1}(1-q_2^k)^{-2k},\end{equation}
where $x_2$ and $q_2$ are defined in \eqref{bore}.
\end{prop}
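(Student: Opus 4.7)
The plan is to reduce each of the four identities to the corresponding statement for $F$ or $G$ provided by Propositions \ref{f}, \ref{g} and \ref{ref}, and to verify that the exponential pre-factors $e^{Q_F}$ and $e^{Q_H}$ supply exactly the correct correction.

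For the difference relations, I first note that $Q_F(z|\omega_1,\omega_2)$ depends on $z$ only through $x_1 = e^{2\pi i z/\omega_1}$, which is invariant under $z \mapsto z + \omega_1$. Hence $Q_F(z+\omega_1) = Q_F(z)$ and \eqref{early} is immediate from Proposition \ref{f}(iii). For \eqref{late}, I will compute $Q_H(z+\omega_1) - Q_H(z)$ directly: the only non-invariant contributions come from the explicit linear-in-$z$ term $-\pi i z \omega_2/(12\omega_1^2)$ and from the $z$-dependence of the derivative in $\omega_1$ acting via $dx_1/d\omega_1$. A short chain-rule calculation should give $Q_H(z+\omega_1) - Q_H(z) = -Q_F(z+\omega_1|\omega_1,\omega_2)$, which together with Proposition \ref{g}(iii) and the definition $H = G/G(0)$ yields \eqref{late}.

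For the reflection \eqref{reff2} I plan to apply \eqref{reff} and then use the second $F$-shift of Proposition \ref{f}(iii) to replace $F(z+\omega_2|\omega_1,\omega_2)$ by $F(z|\omega_1,\omega_2)/(1-x_1)$. The resulting extra factor $(1-x_1)$ is absorbed by $Q_F(z|\omega_1,\omega_2) + Q_F(z|\omega_1,-\omega_2)$: the $\mathrm{Li}_2$ and $\omega_2/\omega_1$ pieces of $Q_F$ are odd in $\omega_2$ and cancel, so only $-\log(1-x_1)$ survives, which is precisely the correction needed.

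The most delicate step is \eqref{refg2}. Since Proposition \ref{g}(iii) supplies only the $\omega_1$-shift of $G$, I first plan to derive an $\omega_2$-shift by differencing the integrands in \eqref{homealone} and applying \eqref{glob} with $d = 2$; this will express $G(z+\omega_2)/G(z)$ as the exponential of a closed-form expression involving $\mathrm{Li}_2(x_1)$ and $\log(1-x_1)$. Combined with \eqref{refg} it gives $G(z|\omega_1,\omega_2)\,G(z|\omega_1,-\omega_2)$ in closed form; specialising to $z = 0$ and invoking Proposition \ref{useful} with $d = 2$ to handle the shift at $z = 0$ then yields the normalisation constant $G(0|\omega_1,\omega_2)\,G(0|\omega_1,-\omega_2)$ needed to compute $H^*(z)\,H^*(z|-\omega_2)$. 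Finally, the $\omega_2$-antisymmetric part of $Q_H$ (the $1/\omega_2$ polylog term and the $z\omega_2/\omega_1^2$ term) cancels in the sum $Q_H(z|\omega_1,\omega_2) + Q_H(z|\omega_1,-\omega_2)$, leaving a remainder that by direct computation should match exactly the combination of the $\omega_2$-shift of $G$ and the above normalisation. The main technical obstacle I anticipate is the careful bookkeeping of the various $\pi i/12$ and $\zeta(2)/(2\pi i)$ constants so that all polylog, product and zeta contributions balance on both sides.
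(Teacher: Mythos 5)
Your plan is correct and follows essentially the same route as the paper's proof: the difference relations reduce to the quasi-periodicity identities $Q_F(z+\omega_1\b\omega_1,\omega_2)=Q_F(z\b\omega_1,\omega_2)$ and $Q_H(z+\omega_1\b\omega_1,\omega_2)-Q_H(z\b\omega_1,\omega_2)=-Q_F(z\b\omega_1,\omega_2)$, while the reflection relations reduce to Proposition \ref{ref} after trading $F(z+\omega_2\b\omega_1,\omega_2)$ for $F(z\b\omega_1,\omega_2)$ via \eqref{diffy}, and $G(z+\omega_2\b\omega_1,\omega_2)$ for $G(z\b\omega_1,\omega_2)$ via the integral representation \eqref{homealone} together with \eqref{glob} at $d=2$, the resulting corrections being exactly the $\omega_2$-even parts of $Q_F$ and $Q_H$. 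One warning so that you do not think you have miscomputed: when you carry out the \eqref{refg2} calculation, the normalisation $G(0\b\omega_1,\omega_2)\cdot G(0\b\omega_1,-\omega_2)$ coming from the denominator in \eqref{defh} contributes, via \eqref{refg} at $z=0$ and \eqref{spring} with $d=2$, an extra factor $\prod_{k\geq 1}(1-q_2^k)^{-2k}$ on the right-hand side. This factor is genuinely present --- it is precisely what the second relation of Problem \ref{tre}(iv) demands of $D(v,w,t)\cdot D(v,w,-t)$ when Proposition \ref{need} is invoked in the proof of Theorem \ref{xmas} --- so the displayed equation \eqref{refg2} is missing it, rather than your computation producing a spurious term.
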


\begin{proof}
Relations \eqref{early} and \eqref{late}   follow directly from the corresponding relations \eqref{diffy} and \eqref{diffyg} for $F$ and $G$.  One just needs to check that
\[Q_F(z+\omega_1\b\omega_1,\omega_2)=Q_F(z\b\omega_1,\omega_2),\]
\[Q_H(z+\omega_1\b\omega_1,\omega_2)-Q_H(z\b\omega_1,\omega_2)=-Q_F(z\b\omega_1,\omega_2),\]
but this is easily done. Note that the denominator in \eqref{defh} has no effect because it is constant in $z$. 

Similarly, the  relations \eqref{reff2} and \eqref{refg2}  follow from the  corresponding reflection properties in Proposition \ref{ref}. Note that in \eqref{reff} one has $F(z+\omega_2\b\omega_1,\omega_2)$ rather than simply $F^*(z\b\omega_1,\omega_2)$ as in \eqref{reff2}, and similarly for $H$. However this effect is precisely cancelled by the constant term of the Laurent polynomials $Q_F$ and $Q_H$. In detail the relations are
\[\log F(z+\omega_2\b  \omega_1,\omega_2) - \log F(z\b  \omega_1,\omega_2) = -\log (1-e^{2\pi i z/\omega_1}), \]\[ \log G(z+\omega_2\b  \omega_1,\omega_2)-\log G(z\b  \omega_1,\omega_2)=\frac{d}{d\omega_1} \bigg(\frac{\omega_1}{2\pi i}\cdot\Li_2(e^{2\pi i z/\omega_1})\bigg).\]
The first is immediate from the second  relation of  \eqref{diffy}, whereas the second follows from the integral representation \eqref{homealone} and  the identity  \eqref{glob}.
\end{proof}

\subsection{The solution}
 
We can now give the solution to our Riemann-Hilbert problem.

\begin{thm}
\label{xmas}
The unique solution to Problem \ref{tre} is
\begin{equation}
\label{ji}B(v,w,t)={F^*}(v\b  w,-t), \qquad  D(v,w,t)=H^*(v\b w,-t),\end{equation}
where the functions $F^*$ and $H^*$ are defined in the previous subsection.
\end{thm}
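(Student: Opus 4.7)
The plan is to verify that the functions $B(v,w,t)=F^*(v\b w,-t)$ and $D(v,w,t)=H^*(v\b w,-t)$ satisfy all four properties of Problem \ref{tre}; uniqueness is already supplied by Proposition \ref{thisone}(ii), since the doubled BPS structure is uncoupled and convergent. Throughout I make the substitution $z=v$, $\omega_1=w$, $\omega_2=-t$, so that the quantities defined in \eqref{bore} become $x_2=\exp(-2\pi iv/t)=x$ and $q_2=\exp(-2\pi iw/t)=q$, matching the notation of Problem \ref{tre}.

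Conditions (iii) and (iv) follow directly from Proposition \ref{need}. The difference relations \eqref{early} and \eqref{late} give condition (iii) under $v\mapsto v+w$, and the reflection relations \eqref{reff2} and \eqref{refg2} give condition (iv) once one observes that they apply precisely when $\Im(\omega_1/\omega_2)=\Im(-w/t)>0$, which describes the region $t\in -i\cdot\Sigma(0)$.

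The main work lies in condition (i). Substituting into the asymptotic expansion \eqref{fass} of Proposition \ref{ass}, the divergent $k=0$ term contributes $\frac{w}{2\pi i(-t)}\Li_2(e^{2\pi iv/w})$ and the constant $k=1$ term contributes $\tfrac{1}{2}\log(1-e^{2\pi iv/w})$. A term-by-term inspection shows that the first two summands of $Q_F$ exactly negate these, while the third summand $\tfrac{\pi i(-t)}{12w}$ vanishes as $t\to 0$, and the $k\geq 2$ tail is $O(t^{k-1})$. This yields $B\to 1$. An entirely analogous argument, using \eqref{gas} for $\log G(v\b w,-t)$ and \eqref{gassi} for $\log G(0\b w,-t)$, shows that $Q_H$ cancels the divergent and constant terms in $\log(G/G(0))$, so $D\to 1$. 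Condition (ii) is handled symmetrically via Proposition \ref{inflimit}: the growing terms $-\tfrac{\pi i\omega_2}{12\omega_1}$ in \eqref{fass2} and $\tfrac{\zeta(3)\omega_2^2}{4\pi^2\omega_1^2}+\tfrac{\pi iz\omega_2}{12\omega_1^2}$ in \eqref{gas2} are precisely the terms whose signs appear reversed in $Q_F$ and $Q_H$ (recalling that $Q_H$ differentiates in $\omega_1$); what remains is logarithmic in $t$, yielding the required polynomial growth.

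The principal obstacle is that Propositions \ref{ass} and \ref{inflimit} are stated only under $0<\Re(z)<\Re(\omega_1)$, which is more restrictive than the condition $(v,w)\in M_+$. I would handle this by exploiting the $\bZ$-symmetry of Section \ref{diffrence}: shifting $v\mapsto v+mw$ for a suitable integer $m$ reduces any point of $M_+$ to the case where the expansions apply, and the limiting behavior then propagates back using the already-verified difference relation \eqref{early} and its counterpart for $H^*$. Holomorphy of $F^*$ and $H^*$ on $\cV(0)$ follows from Propositions \ref{f}(i) and \ref{g}(i), since the poles and zeros of $F$ and $G$ lie along a discrete family of rays $t=(aw-v)/b$ that miss $\cV(0)$, and the factor $-\tfrac{1}{2}\log(1-e^{2\pi iv/w})$ appearing in $Q_F$ is well-defined because the definition of $M$ forces $v/w\notin \bZ$.
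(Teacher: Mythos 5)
Your overall structure matches the paper's proof: holomorphy on $\cV(0)$ from the zero/pole loci in Propositions \ref{f}(i) and \ref{g}(i), conditions (iii) and (iv) from Proposition \ref{need}, and conditions (i) and (ii) by checking that $Q_F$ and $Q_H$ (together with the denominator $G(0\b\omega_1,\omega_2)$ in \eqref{defh}) cancel the divergent and constant terms of the expansions in Propositions \ref{ass}, \ref{assy} and \ref{inflimit}. That part is fine.

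The gap is in your workaround for the domain restrictions on those expansions. First, the $\bZ$-shift $v\mapsto v+mw$ can only move $\Re(v)$ by integer multiples of $\Re(w)$, so it cannot achieve $0<\Re(v+mw)<\Re(w)$ when $\Re(w)\le 0$; a generic point of $M_+$ is not reachable this way. Second, and more seriously, you address only the hypothesis $0<\Re(z)<\Re(\omega_1)$ and not the hypothesis on $\omega_2$: Propositions \ref{ass} and \ref{inflimit} give expansions only as $\omega_2=-t$ varies in a closed subsector of the half-plane $\Re(\omega_2)>0$, whereas condition (i) of Problem \ref{tre} requires the limit on closed subsectors of $\cV(0)=\cH(-1)\cup\cH(0)$, a region determined by $(v,w)$ that in general is not contained in $\{\Re(-t)>0\}$. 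Shifting $v$ does not move this region into place, and propagating limits back through the difference relations is itself delicate (the factors $(1-xq^{j})^{\pm1}$ tend to $1$ only on the appropriate half-planes, and the relations hold only on the overlaps $\cH(j)$). The missing idea is the scale-invariance of $F$ and $G$ recorded in Propositions \ref{f}(ii) and \ref{g}(ii): cover the given closed subsector $\Sigma\subset\cV(0)$ by finitely many half-planes $\cH$ centered on rays in $\Sigma(0)$; for such an $\cH$ one checks that $v$, $w$ and $w-v$ all lie in $-\cH$, and then a single rotation of all three arguments $(v,w,-t)$ simultaneously places $-t$ in the right half-plane and $v,w$ in the configuration $0<\Re(v)<\Re(w)$, after which Propositions \ref{ass} and \ref{inflimit} apply directly. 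With that replacement your argument goes through; without it, step (i)--(ii) is not established for general $(v,w)\in M_+$.
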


\begin{proof}
We must first check that for a fixed $(z,w)\in M_+$ these formulae do indeed define non-vanishing holomorphic functions on the required domain
$\cV(0)=\cH(-1)\cup \cH(0)$. Recall that $\cH(-1)$ or $\cH(0)$ are the half-planes centered on the rays $\bR_{>0}\cdot 2\pi i (v-w)$ and $\bR_{>0}\cdot 2\pi i v$ respectively. Note that $w\notin\cV(0)$ because
\[\Re(2\pi i (v-w)/w)=\Re(2\pi i v/w)<0.\]

Suppose that $t\in \cV(0)$ is a zero or pole of ${F}(v\b w,-t)$.
Proposition \ref{f}(i) implies that $\mp v= aw-bt$ with $a,b\in \bZ_{\geq 0}$. Since $\Im(v/w)> 0$, such a relation implies that $b>0$. Moreover, consulting Proposition \ref{f}(i) more carefully, we see that  $a>0$. We conclude that $t$ lies on the ray  spanned by $\pm v +aw$ for some $a\geq 1$. 

Now if $t$  lies in the half-plane $\cH(0)$  then rotating $\cH(0)$ by  small angle gives a half-plane containing $\pm v+aw$ and $\mp v$, but not $w$, a contradiction. On the other hand, if $t$ is contained in $\cH(-1)$ then rotating by a small angle gives a half-plane containing $\pm v+aw$ and $\mp (v-w)$, but not $w$, another contradiction. 

We conclude that ${F}(v\b w,-t)$ is holomorphic and non-vanishing for $t\in \cV(0)$. The same argument applies to $G( v\b w,-t)$ using Proposition \ref{g}(i). Finally, note that the denominator in \eqref{defh} causes no trouble, since by Proposition \ref{g}(i) again,   $G(0\b  w,-t)$ is regular and non-vanishing whenever $t\notin \bR_{>0}\cdot w$.

We now check the conditions of Problem \ref{tre} one by one. Parts (iii) and (iv)  follow immediately from Proposition \ref{need}, so it remains to prove the asymptotic properties (i) and (ii). Let  $\Sigma$ be a closed subsector of  $\cV(0)$. We must show that  for $t\in \Sigma$ one has
\begin{itemize}
\item[(i)]
$F^*(v\b w,-t)\text{ and } H^*(v\b w,-t)\to 1 \text { as } \omega_2\to 0$,

\item[(ii)] there exists $k>0$ such that for all $|\omega_2|\gg 0$
\[|t|^{-k}< |F^*(v\b w,-t)|, |H^*(v\b w,-t)|<|t|^{k}.\]
\end{itemize}
Note that it is enough to consider the case when $\Sigma$ is contained in a half-plane $\cH$ centered on some ray in $\Sigma(0)$, since $\Sigma$ is in any case contained in a finite union of such half-planes. Then $v,w$ and $w-v$ all lie in $-\cH$. By homogeneity of the functions $F$ and $H$ we can rotate so that $-\cH$ is the right-hand half-plane. The claims then follows from Propositions \ref{ass}, \ref{assy} and \ref{inflimit}.  
\end{proof} 

A very similar argument shows that when $z/w\in (0,1)$ the expressions \eqref{ji} give a solution to Problem \eqref{tro}. We leave the details to the reader.

\begin{remark}
Using \eqref{blob}, the unique solution to Problem \ref{tra} is 
\begin{equation}\label{argh}\begin{gathered}B_n(v,w,t)= {F}^*(v+nw \b w,-t), \\
D_n(v,w,t)=H^*(v+nw\b w,-t)\cdot {F}^*(v+nw  \b w,-t)^{n}.\end{gathered}\end{equation}
\end{remark}

\subsection{The $\tau$-function}

Let us introduce a function
\begin{equation}\label{gg}H^\dagger(z\b\omega_2,\omega_1)=H(z\b\omega_2,\omega_1) \cdot e^{R(z\b\omega_2,\omega_1)},\end{equation}
where $R$ is the expression
\begin{equation}
\label{rr}
R(z\b \omega_2,\omega_1)=\Big(\frac{\omega_1}{2\pi i \omega_2}\Big)^2\cdot \big( \Li_3(e^{2\pi i z/\omega_1})-\zeta(3)\big)+\frac{i\pi}{12}\cdot\frac{ z}{\omega_1}.\end{equation}
The point of this is that the relations  \eqref{latehome} and \eqref{nosven} become
\begin{equation}
\label{latehome2}\frac{\partial}{\partial \omega_2}\log F^*(z\b\omega_1,\omega_2)=\frac{\partial}{\partial z} \log H^\dagger(z\b\omega_2,\omega_1),\end{equation}
\begin{equation}
\label{nosven2}
\frac{\partial}{\partial \omega_2} \log H^*(z\b\omega_1,\omega_2)=\frac{\partial}{\partial \omega_1}\log H^\dagger(z\b\omega_2,\omega_1).\end{equation}


We can use these relations to write down a $\tau$-function $\tau_n(v,w,t)$ for the family of solutions \eqref{argh}. Such a function is uniquely defined up to multiplication by a nonzero constant. We claim that a possible choice is\begin{equation}
 \label{rang}\tau_n(v,w,t)= H^\dagger(v+nw\,|-t,w).\end{equation}
Indeed,  \eqref{rang} is homogeneous under rescaling all variables and, recalling  the definition of the central charge $Z$ from Section \ref{skip}, the required  relations \eqref{hungry} become the identities
\[\frac{\partial}{\partial t}\log {F^*}(v+nw \b w,-t)=-\frac{\partial}{\partial v}\log H^\dagger(v+nw\,|-t,w),\]
 \begin{equation*}{\frac{\partial}{\partial t}\Big(\log {H^*}(v+nw \b w,-t) +n \log {F^*}(v+nw \b w,-t)\Big)}=-\frac{\partial}{\partial w}\log H^\dagger(v+nw\,| -t,w),\end{equation*}
which follow easily from \eqref{latehome2} and \eqref{nosven2}.

In the case $n=0$, combining the definition \eqref{defh} with the integral representation \eqref{homealone} easily gives the integral representation \eqref{alba} for the function $K(v,w,t)=H(v\,| -t,w)$. 
Comparing \eqref{rr} with \eqref{gass} and \eqref{gassi2}    shows that as $t\to 0$ there is an asymptotic expansion
\[ \log \tau(v,w,t)\,\sim\, \ -\frac{1}{12}\log \Big(\frac{-w}{t}\Big) + \frac{i\pi}{12}\cdot \frac{v}{w}\]\[+
\sum_{g\geq 1}   \frac{B_{2g} \cdot \Li_{3-2g}(e^{2\pi i v/w})}{2g\cdot (2g-2)!  }\, \Bigg(\frac{2\pi i t}{w}\bigg)^{2g-2}  +\sum_{g\geq 2}\frac{ B_{2g} \cdot B_{2g-2}  }{2g\cdot (2g-2)\cdot  (2g-2)! }\, \bigg(\frac{2\pi i t}{w}\bigg)^{2g-2}
.\]
This completes the proof of Theorem \ref{corr}.

\begin{appendix}
\section{Stability conditions and  DT theory for sheaves of dimension $\leq 1$}
\label{app}
In this Appendix we recall some results on stability conditions on categories of coherent sheaves supported in dimension $\leq 1$ and the corresponding Donaldson-Thomas invariants. We start with the case of a  projective Calabi-Yau threefold, before considering the case of the resolved conifold. We shall not give full proofs for the results on stability conditions stated here, since they are quite standard, and not logically necessary for the main results of the paper.  For more on stability conditions in general the reader can consult \cite{Br1,Br2}, while the particular stability conditions considered here are  studied in detail in \cite[Section 7]{To}.

\subsection{Stability conditions on a Calabi-Yau threefold}

 Let $X$ be a smooth, projective complex variety of dimension three, with trivial canonical bundle $\omega_X\isom \O_X$. Let $\Coh(X)$ denote the abelian category of coherent sheaves on $X$, and let\[\cA=\Coh_{\leq 1}(X)\subset \Coh(X)\]
 be the full subcategory consisting  of sheaves whose set-theoretic support  has dimension $\leq 1$. Any  sheaf $E\in \cA$ has a Chern character \[\ch(E)=(0,0,\ch_2(E),\ch_3(E))\in \bigoplus_{i=0}^3H^{2i}(X,\bZ),\]
 which via Poincar{\'e} duality we can view as an element
\[\ch(E)=(\beta,n)\in \Gamma=H_2(X,\bZ)\oplus \bZ.\]
This defines a group homomorphism $\ch\colon K_0(\cA)\to \Gamma$.

Let $\cK_{\bC}\subset H^2(X,\bC)$ be the complexified K{\"a}hler cone of $X$. By definition, it consists of classes of the form $\omega_\bC=B+i\omega$ with $B\in H^2(X,\bR)$ arbitrary and $\omega\in H^2(X,\bR)$ a K{\"a}hler class. Given such a complexified K{\"a}hler class $\omega_\bC\in \cK_\bC$ we define the corresponding central charge to be
the group homomorphism
\[Z_{\omega_\bC}\colon K_0(\cA)\to \bC, \quad Z(E)=\omega_\bC\cdot \ch_2(E) -\ch_3(E).\]
The assumption that $\omega$ is  K{\"a}hler  ensures that for any nonzero object $E\in \cA$ the complex number $Z(E)\in \bC$ lies in the semi-closed upper half-plane
\[\bar{\cH}= \{z=r\exp(i\pi \phi): r\in \bR_{>0}\text{ and }0<\phi\leq 1\}\subset \bC^*.\]
This is precisely the statement that $Z$  defines a stability condition on the abelian category $\cA$.

Let $\cD\subset \cD^b\Coh(X)$ be the full triangulated subcategory of the bounded derived category of coherent sheaves on $X$ consisting of complexes whose cohomology sheaves have set-theoretic support of dimension $\leq 1$. The standard t-structure on $\cD^b\Coh(X)$ induces a bounded t-structure on the triangulated category $\cD$ whose heart can be identified with $\cA$. General results give the existence of a complex manifold $\Stab(\cD)$ parameterising stability conditions on $\cD$ whose central charge factors via the Chern character $\ch\colon K_0(\cD)\to \Gamma$. It comes with a natural action of the group of triangulated auto-equivalences $\Aut(\cD)$ of the  category $\cD$, and a period map
\begin{equation}
\label{peri}\pi\colon \Stab(\cD)\to \Hom_{\bZ}(\Gamma,\bC), \quad (Z,\cP)\mapsto Z,\end{equation}
which is a local analytic isomorphism. There is also a natural action of $\bC$ on $\Stab(\cD)$ which acts on central charges by rotation. 
The above construction gives

\begin{prop}
\label{peewee}
There is an open subset $U(X)\subset \Stab(\cD)$ with the following properties:
\begin{itemize}
\item[(i)] It is preserved by the action of $\bC$ and hence also the double shift $[2]\in \Aut \cD(X)$.
\smallskip
\item[(ii)] The restriction of the period map \eqref{peri} to $U(X)$ is the universal cover of the  image of the open embedding
\begin{equation}
\label{chop}\cK_{\bC}\times \bC^*\subset \Hom_{\bZ}(\Gamma,\bC), \qquad    (\omega_\bC,q)\mapsto q\cdot  Z_{\omega_\bC},\end{equation}
and the covering group is realised by the  action of the double shift $[2]$.
\smallskip

\item[(iii)] All points of $U(X)$ are obtained by applying the action of $\bC$ to a unique stability condition with heart $\cA\subset \cD$ arising from the construction above.
\end{itemize}
\end{prop}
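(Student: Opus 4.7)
The plan is to exhibit $U(X)$ explicitly as the image of the map
\[
\Phi\colon \cK_{\bC}\times \bC \lra \Stab(\cD), \qquad (\omega_\bC,\lambda)\mapsto \lambda\cdot (Z_{\omega_\bC},\cA),
\]
where $\bC$ acts on $\Stab(\cD)$ in the standard way ($\lambda\cdot (Z,\cP)$ has central charge $e^{-i\pi\lambda}Z$ and slicing rotated by $\Re(\lambda)$), and $(Z_{\omega_\bC},\cA)$ is the stability condition arising from the construction immediately preceding the proposition. The first step is to check that $(Z_{\omega_\bC},\cA)$ really defines a stability condition on $\cD$ with heart $\cA$: the central charge axiom is precisely the content of the paragraphs above Proposition \ref{peewee}, and what remains is the existence of Harder--Narasimhan filtrations and the support property. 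For HN filtrations one uses that $\cA$ is Noetherian and that the set of possible phases of subobjects of a fixed object is discrete; this is standard for central charges coming from ample classes and is treated carefully in \cite{To}. For the support property, fix a norm $\|\cdot\|$ on $\Gamma\tensor \bR$; since $\omega$ is K\"ahler, the linear functional $\omega\cdot(-)$ is strictly positive on the (closed, pointed) cone of effective curve classes, so $|Z_{\omega_\bC}(E)|\geq C\|\ch(E)\|$ on objects with $\ch_2(E)\neq 0$, and for $\ch_2(E)=0$ we have $|Z_{\omega_\bC}(E)|=|\ch_3(E)|=\|\ch(E)\|$ directly.

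Next I would verify that $\Phi$ is injective. If $\Phi(\omega_1,\lambda_1)=\Phi(\omega_2,\lambda_2)$, comparing central charges on the class of a skyscraper sheaf gives $e^{-i\pi\lambda_1}=e^{-i\pi\lambda_2}$, so $\lambda_1-\lambda_2\in 2\bZ$; equality of central charges then forces $\omega_1=\omega_2$. If $\lambda_1-\lambda_2=2k$ with $k\neq 0$ the associated slicings differ by the shift $[2k]$ (the heart is $\cA$ for one and $\cA[2k]$ for the other), contradiction. To see that $\Phi$ is an open embedding, observe the factorisation
\[
\pi\circ \Phi\colon \cK_\bC\times \bC\xrightarrow{\ \mathrm{id}\times \exp\ } \cK_\bC\times \bC^*\longinto \Hom_\bZ(\Gamma,\bC),
\]
where $\exp(\lambda)=e^{-i\pi\lambda}$ and the second arrow is the map \eqref{chop}. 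The map \eqref{chop} is an open embedding by inspection (its image is cut out by $Z(\ch_3)\neq 0$ together with a K\"ahlerness condition on the part dual to $H_2$), and $\mathrm{id}\times \exp$ is a local homeomorphism. Since $\pi$ is a local analytic isomorphism, $\Phi$ itself is a local homeomorphism, and combined with injectivity this makes it an open embedding. Define $U(X):=\Phi(\cK_\bC\times \bC)$.

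The three required properties now follow without further work. For (i), $\mu\cdot \Phi(\omega_\bC,\lambda)=\Phi(\omega_\bC,\lambda+\mu)$, so $U(X)$ is $\bC$-invariant, and in particular $[2]\in \bC$ preserves it. For (ii), $\cK_\bC$ is an open convex cone in $H^2(X,\bC)$ and $\bC$ is contractible, so $\cK_\bC\times\bC$ is simply connected; the factorisation displayed above then identifies $\pi|_{U(X)}$ with the universal cover of the image of \eqref{chop}, whose deck transformation group is the kernel of $\mathrm{id}\times\exp$, namely $2\bZ$ acting by translation on the $\bC$-factor, which under $\Phi$ is the $[2]$-action. For (iii), each $\sigma\in U(X)$ is by construction $\lambda\cdot (Z_{\omega_\bC},\cA)$ for some $(\omega_\bC,\lambda)$, and the injectivity argument shows that $\omega_\bC$ and $\lambda$ (hence the underlying stability condition with heart $\cA$) are uniquely determined.

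The main obstacle is Step 1, the verification that $(Z_{\omega_\bC},\cA)$ is a genuine Bridgeland stability condition: the existence of Harder--Narasimhan filtrations is a nontrivial chain-condition argument which is really the heart of the matter and is why we defer to \cite{To} rather than reproving it. The support property, by contrast, is almost immediate from the definition of the K\"ahler cone. The remaining steps (injectivity, openness, and the identification with the universal cover) are formal once one has the $\bC$-action on $\Stab(\cD)$ and the local isomorphism property of the period map in hand.
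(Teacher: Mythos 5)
The paper gives no proof of this proposition: the appendix states explicitly that full proofs of the stability-condition results will be omitted because they are standard and not logically necessary for the main results, deferring to \cite{Br1,Br2} and especially \cite{To}. So there is no argument of the author's to compare yours against line by line. Your outline is the standard construction one would expect the author to have in mind: realise $U(X)$ as the image of $\cK_{\bC}\times\bC$ under $(\omega_\bC,\lambda)\mapsto\lambda\cdot(Z_{\omega_\bC},\cA)$, reduce the Harder--Narasimhan existence to \cite{To}, and deduce (i)--(iii) formally from injectivity of $\Phi$ together with the factorisation of $\pi\circ\Phi$ through $\mathrm{id}\times\exp$. The injectivity argument via the skyscraper class, the identification of the deck group with $2\bZ$ acting as $[2]$, and the simple connectivity of $\cK_{\bC}\times\bC$ are all correct.

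Two steps need tightening. First, your support-property argument is incomplete as written: strict positivity of $\omega$ on the effective cone only gives $\Im Z_{\omega_\bC}(E)=\omega\cdot\ch_2(E)\geq c\,\|\ch_2(E)\|$, which does not bound $\|\ch(E)\|$ when $|\ch_3(E)|$ is much larger than $\|\ch_2(E)\|$ (consider $E=\O_C(n)$ with $n\to\infty$, where $\Im Z$ stays bounded while $\|\ch(E)\|$ grows). You must also use the real part: if $|\ch_3(E)|$ dominates a fixed multiple of $\|\ch_2(E)\|$ then $|\Re Z_{\omega_\bC}(E)|=|B\cdot\ch_2(E)-\ch_3(E)|$ is comparable to $|\ch_3(E)|$, and otherwise $\|\ch(E)\|$ is comparable to $\|\ch_2(E)\|$; combining the two cases gives $|Z_{\omega_\bC}(E)|\geq C\|\ch(E)\|$. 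Second, the deduction that $\Phi$ is a local homeomorphism from the fact that $\pi$ and $\pi\circ\Phi$ are presupposes that $\Phi$ is continuous, i.e.\ that $\omega_\bC\mapsto(Z_{\omega_\bC},\cA)$ is continuous for the natural topology on $\Stab(\cD)$; this is the standard lemma that on the locus of stability conditions with a fixed heart the topology is induced by the central charge, and should be stated or cited. With those two additions the argument is complete modulo the HN chain-condition input, which you, like the paper, correctly delegate to \cite{To}.
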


The BPS invariants for stability conditions lying in the subset $U(X)\subset \Stab(\cD)$ were studied by Joyce and Song \cite{JS} (see particularly Sections 6.3 and 6.4). More precisely, they considered the stability conditions on the abelian category $\cA$ described above. They showed that 
\[\Omega(0,n)=-\chi(X)\]
for all integers $n\in \bZ\setminus\{0\}$. They also conjectured that for any effective class $\beta\in H_2(X,\bZ)$ the BPS invariant
\[\Omega(\beta,n)=\GV(0,\beta)\]
coincides with the genus 0 Gopakumar-Vafa invariant counting genus 0 curves in $X$. This is now known to hold in many examples, but not in general. In particular, it is not known in general whether the numbers $\Omega(\beta,n)\in \bQ$ are independent of $n$, or whether they are integral.

\subsection{Stability conditions on the resolved conifold}

The material of the last subsection can be generalised to quasi-projective Calabi-Yau threefolds with some extra assumptions. For example, in \cite[Section 6.7]{JS} Joyce and Song require  that $X$ is compactly embeddable. Here we just consider the case of the resolved conifold $X=\operatorname{Tot} \O_{\bP^1}(-1)^{\oplus 2}$. 

The variety $X$ contains a unique compact curve, namely the zero section $C\isom \bP^1\subset X$. It defines a class $\beta=[C]\in H_2(X,\bZ)$. There are identifications
\[\Gamma=H_2(X,\bZ)\oplus H_0(X,\bZ)=\bZ\cdot \beta \oplus \bZ \cdot \delta.\]
We let $\cA\subset \Coh(X)$ be the full subcategory consisting of objects with compact support. All such objects are supported in  dimension $\leq 1$.
The Chern character map together with Poincar{\'e} duality gives a group homomorphism
\[\ch=(\ch_2,\ch_3)\colon K_0(\cA) \to \Gamma.\]
We write $\O_C(n)$ for the degree $n$ line bundle supported on the rational curve $C\subset X$, and $\O_x$ for the skyscraper sheaf supported at a point $x\in X$. We choose the sign of the generator $\delta$ so that
\[\ch(\O_C(n))=\beta-n\delta, \qquad \ch(\O_x)=-\delta.\]

Let $\cD^b\Coh(X)$ denote the bounded derived category of coherent sheaves on $X$, and let $\cD\subset \cD^b\Coh(X)$ denote the full triangulated subcategory consisting of objects whose support is compact. As before, the standard t-structure on $\cD^b\Coh(X)$ restricts to give a bounded t-structure on the category $\cD$ whose heart can be identified with $\cA\subset \cD$.
We can define an open subset $U(X)\subset \Stab(\cD)$ exactly as in Proposition \ref{peewee}. We denote by $\Stab^0(\cD)\subset \Stab(\cD)$ the connected component containing it.

\begin{thm}
The period  map \eqref{peri} restricted to the connected component $\Stab^0(\cD)$
is a regular covering map over its image, which is the open subset
\[M=\{Z\colon \Gamma\to \bC: Z(\beta+n\delta)\neq 0, Z(\delta)\neq 0\}.\]
The deck transformations can be identified with the subgroup of auto-equivalences of $\cD$ generated by the spherical twists in the objects $\O_C(n)$ for $n\in \bZ$ together with the second shift $[2]$.
\end{thm}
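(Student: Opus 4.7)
I would prove three things in order: (i) $\pi(\Stab^0(\cD)) \subseteq M$, (ii) $\pi|_{\Stab^0(\cD)}$ surjects onto $M$ and is a covering there, and (iii) the deck group coincides with $G := \langle \Phi_{\O_C(n)}, [2] \rangle$. For (i), I would use that $\O_C(n)$ and $\O_x$ are semistable objects of the heart $\cA$ for the stability conditions in $U(X)$, so the central charges $Z(\beta+n\delta)$ and $Z(\delta)$ are nonzero there; by the support property and connectedness of $\Stab^0(\cD)$, no class with $\Omega(\gamma) \neq 0$ can acquire vanishing central charge anywhere on $\Stab^0(\cD)$ without violating the stability axiom.

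For surjectivity of $\pi$ onto $M$, the plan is to combine Proposition \ref{peewee}(ii), which identifies $\pi(U(X))$ with the `large volume' region (the image of $\cK_\bC \times \bC^*$), with a wall-crossing argument: every point of $M$ is joined to this region by a path crossing only walls of the form $\arg Z(\gamma_1) = \arg Z(\gamma_2)$ for active classes, and each such crossing can be lifted to $\Stab^0(\cD)$ by tilting $\cA$ at an appropriate torsion pair (the $\bS(\ell_n)$ of Section \ref{bps} realise the same birational transformations on twisted characters). To promote surjectivity to the covering property, I would verify path-lifting: since $\pi$ is a local biholomorphism and adjacent chambers of $\Stab^0(\cD)$ are related by explicit tilts, any path in $M$ has a unique lift once a starting stability condition is fixed. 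Regularity of the covering --- transitivity of the deck group on fibres --- comes out of the same wall-crossing analysis.

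For (iii), the essential observation is that the Euler form vanishes on $\cA$, because compact curves on a Calabi--Yau threefold have zero intersection number; hence the spherical twist $\Phi_{\O_C(n)}$, acting on $K_0$ by $[E] \mapsto [E] - \chi(\O_C(n), E) \cdot [\O_C(n)]$, acts trivially, and $[2]$ also acts trivially on $K_0$, so both generators preserve central charges and lie in the deck group. Conversely, given a deck transformation $\phi$, one would use path-lifting around loops in $M$ encircling walls to express $\phi$ as a product of generators of $G$. The main obstacle will be ruling out further numerically trivial autoequivalences of $\cD$; for this I would appeal to the detailed analysis of $\Aut(\cD)$ for the resolved conifold in Toda \cite{To}, together with the observation that $[1]$ negates central charges and so, unlike $[2]$, cannot appear in the deck group.
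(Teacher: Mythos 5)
The paper itself offers no proof of this theorem: it states explicitly that full proofs of the stability-conditions results in the appendix are omitted as standard and defers to Toda \cite{To}, where these stability conditions are analysed in detail. Your sketch is essentially a reconstruction of Toda's argument (chamber decomposition by tilting, path-lifting across walls, identification of the deck group with twists and shifts), so in outline it is the intended route rather than a genuinely different one. Two points, however, need repair before this would go through.

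First, step (i) is not quite self-contained: the support property at a point $\sigma$ only constrains classes that actually support $\sigma$-semistable objects, so to conclude $Z(\beta+n\delta)\neq 0$ and $Z(\delta)\neq 0$ everywhere on $\Stab^0(\cD)$ you must first know that these classes are represented by semistable objects at \emph{every} point of the component, not just on $U(X)$. That fact is itself a consequence of the chamber/tilting analysis of step (ii) (or of the deformation-invariance of the active classes for an uncoupled structure), so the logical order of (i) and (ii) should be reversed or intertwined. Second, and more seriously, the strategy for (iii) is misdirected: classifying numerically trivial autoequivalences of $\cD$ is neither necessary nor sufficient, since a deck transformation of $\pi|_{\Stab^0(\cD)}$ need not a priori be induced by any autoequivalence, and conversely a numerically trivial autoequivalence need not preserve the connected component $\Stab^0(\cD)$. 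The efficient argument is to show directly that the group $G=\langle \operatorname{Tw}_{\O_C(n)}, [2]\rangle$ acts transitively (indeed simply transitively) on each fibre of $\pi$, by exhibiting $\Stab^0(\cD)$ as the union of the $G$-translates of the closure of $U(X)$ and checking that $\pi$ is injective on a fundamental domain; since a deck transformation is determined by the image of a single point, transitivity of $G$ on one fibre simultaneously proves regularity of the covering and the equality of the deck group with $G$. You would also need to verify that the spherical twists preserve $\Stab^0(\cD)$ at all (this is where the explicit tilts at the torsion pairs generated by $\O_C(n)$ enter), a point your sketch assumes implicitly. Finally, the parenthetical identification of the tilts with the wall-crossing automorphisms $\bS(\ell_n)$ of Section \ref{bps} conflates two related but distinct structures (autoequivalences of $\cD$ versus birational automorphisms of the twisted torus $\bT$) and should not be used as a step in the argument.
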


There is a projection $\pi\colon X\to \bP^1$, and pulling back the line bundle $\O(1)$ gives a line bundle on $X$ which we also denote $\O(1)$. Tensoring with this generates a subgroup of $\Aut(\cD)$ isomorphic to $\bZ$ . The element $n\in \bZ$ acts on $\Gamma$ via
\[\beta\mapsto \beta-n\delta, \qquad \delta\mapsto \delta.\]
This is the origin of the $\bZ$-symmetry used in Section \ref{diffrence}.

Let us denote by $\Aut^0(\cD)$ the group of auto-equivalences of $\cD$ generated by spherical twists in the objects $\O_C(n)$, the double shift $[2]$, together with tensoring with $\O(1)$.
The standard action of $\bC$ on $\Stab(\cD)$ descends to an action of $\bC^*$ on $\Stab(\cD)/[2]$ which at the level of central charges is the obvious rescaling  action. Taking the double quotient gives
\[\bC^*\backslash \Stab^0(\cD)/\Aut^0(\cD)=\bC^*\backslash M/\bZ =\bC \bP^1\setminus \{0,1,\infty\},\]
where the natural co-ordinate on $\bC\bP^1$ is $x=\exp(2\pi i Z(\beta)/Z(\delta))$.
This can be thought of as the stringy K{\"a}hler moduli space for the conifold. Two of the missing points  correspond to large volume limits in the two small resolutions of the threefold ordinary double point, and the other is a conifold point where the mass of a spherical object becomes zero.

The BPS invariants for stability conditions in the open subset $U(X)$ were  calculated by Joyce and Song \cite[Example 6.30]{JS}. One has
\[\Omega(\gamma)=\begin{cases} 1 &\text{if }\gamma=\pm \beta+n\delta \text{ for some } n\in \bZ,\\
-2 &\text{if }\gamma=k\delta\text{ for some }k\in \bZ\setminus\{0\},\end{cases}\]
with all others being zero.
\end{appendix}




\begin{thebibliography}{100}




\bibitem{Barnes2} E.W. Barnes, The genesis of the double gamma functions, Proc. London Math. Soc. 31 (1900), 358--381.

\bibitem{Barnes3} E.W. Barnes, The theory of the double gamma function,  Philos. Trans. Roy. Soc. A 196 (1901), 265--388.

\bibitem{Barnes4} E.W. Barnes, On the theory of the multiple gamma function, Trans. Camb. Philos. Soc. 19 (1904), 374--425.

\bibitem{Br1} T. Bridgeland, Stability conditions on triangulated categories. Ann. of Math. (2) 166 (2007), no. 2, 31--345.

\bibitem{Br2} T. Bridgeland, Spaces of stability conditions, Algebraic geometry--Seattle 2005. Part 1, 1--21, 
Proc. Sympos. Pure Math., 80, Part 1, Amer. Math. Soc., Providence, RI, 2009.

\bibitem{RHDT} T. Bridgeland, Riemann-Hilbert problems from Donaldson-Thomas theory, arxiv:1611.03697.

\bibitem{BTL} T. Bridgeland and V. Toledano Laredo, Stability conditions and Stokes factors. Invent. Math. 187 (2012), no. 1, 61--98.

\bibitem{COGP} P. Candelas, X. de la Ossa, P. Green and L. Parkes, A pair of Calabi-Yau manifolds as an exactly soluble superconformal theory. Nuclear Phys. B 359 (1991), no. 1, 21--74.

\bibitem{Do} M.R. Douglas,  D-Branes on Calabi-Yau manifolds. European Congress of Mathematics, Vol. II (Barcelona, 2000), 449--466, Progr. Math., 202, Birkh\"auser, Basel, 2001. 


 \bibitem{FP} C. Faber and R. Pandharipande, Hodge integrals and Gromov-Witten theory. Invent. Math. 139 (2000), no. 1, 173--199.

\bibitem{FG} V.V. Fock and A.B. Goncharov, The quantum dilogarithm and representations of quantum cluster varieties, Invent. Math. 175 (2009), no. 2, 223--286. 



\bibitem{GMN1} D. Gaiotto, G. Moore and A. Neitzke, Four-dimensional wall-crossing via three-dimensional field theory. Comm. Math. Phys. 299 (2010), no. 1, 163--224.

%
\bibitem{Giv} A. Givental, Equivariant Gromov-Witten invariants, Internat. Math. Res. Notices 1996, no. 13, 613--663.


\bibitem{HO} Y. Hatsuda and K. Okuyama, Resummations and non-perturbative corrections, J. High Energy Phys. 2015, no. 9, 051, 28 pp.


\bibitem{JM} M. Jimbo and T. Miwa, Quantum KZ equation with $|q| =1$ and correlation functions of the XXZ model in the gapless regime, J. Phys. A 29 (1996), no. 12, 2923--2958. 


\bibitem{JS} D. Joyce and Y. Song, A theory of generalized Donaldson-Thomas invariants. Mem. Amer. Math. Soc. 217 (2012), no. 1020, 199 pp.

\bibitem{KLS} S. Kharchev, D. Lebedev, M. Semenov-Tian-Shansky, Unitary representations of $U_q(\mathfrak{sl}(2,\bR))$, the modular double, and the multiparticle $q$-deformed Toda chains, Comm. Math. Phys. 225 (2002), no. 3, 573--609. 

\bibitem{KKP} L. Katzarkov, M. Kontsevich and T. Pantev, Hodge theoretic aspects of mirror symmetry. From Hodge theory to integrability and TQFT $tt^*$-geometry, 87--174, Proc. Sympos. Pure Math., 78, Amer. Math. Soc., Providence, RI, 2008.

\bibitem{KS1} M. Kontsevich and Y. Soibelman, Stability structures, motivic Donaldson-Thomas invariants and cluster transformations, arxiv 0811.2435.


\bibitem{Ko} S. Koshkin, Quantum Barnes function as the partition function of the resolved conifold, Int. J. Math. Math. Sci. 2008,  47 pp.


\bibitem{KK}  S. Koyama and  N. Kurokawa, Multiple sine functions, Forum Math. 15 (2003) 839--876.

\bibitem{KM} D. Krefl and R. Mkrtchyan, Exact Chern-Simons / topological string duality, J. High Energy Phys. 2015, no. 10, 045, 26 pp.


\bibitem{MNOP} D. Maulik, N. Nekrasov, A. Okounkov and R. Pandharipande, Gromov-Witten theory and Donaldson-Thomas theory. I. Compos. Math. 142 (2006), no. 5, 1263--1285.



\bibitem{N} A. Narukawa, The modular properties and the integral representations of the multiple elliptic gamma functions, Adv. Math. 189 (2004), no. 2, 247--267.

\bibitem{ON}  N. Nekrasov and A. Okounkov, Seiberg-Witten theory and random partitions. The unity of mathematics, 525--596, Progr. Math., 244, Birkh{\"a}user, Boston, MA, 2006.

\bibitem{P} R. Pandharipande, Three questions in Gromov-Witten theory, Proceedings of the International Congress of Mathematicians, Vol. II (Beijing, 2002), 503--512, Higher Ed. Press, Beijing, 2002.

\bibitem{PP} R. Pandharipande and A. Pixton, Gromov-Witten/Pairs correspondence for the quintic 3-fold. J. Amer. Math. Soc. 30 (2017), no. 2, 389--449.


\bibitem{Ruj1} S.N.M. Ruijsenaars,  Special functions defined by analytic difference equations, Special functions 2000: current perspective and future directions (Tempe, AZ), 281--333, NATO Sci. Ser. II Math. Phys. Chem., 30, Kluwer Acad. Publ., Dordrecht, 2001.


\bibitem{Ruj2} S.N.M. Ruijsenaars, On Barnes' multiple zeta and gamma functions. Adv. Math. 156 (2000), no. 1, 107--132.


\bibitem{Ruj3} S.N.M. Ruijsenaars, First order analytic difference equations and integrable quantum systems. J. Math. Phys. 38 (1997), no. 2, 1069--1146.


\bibitem{To} Y. Toda, Stability conditions and crepant small resolutions,  Trans. Amer. Math. Soc. 360 (2008), no. 11, 6149--6178.

\bibitem{WW} E.T. Whittaker and G.N. Watson, A course of modern analysis. An introduction to the general theory of infinite processes and of analytic functions; with an account of the principal transcendental functions. Reprint of the fourth (1927) edition. Cambridge University Press, 1996. vi+608 pp.

\end{thebibliography}
\end{document}